\definecolor{Chocolat}{rgb}{0.36, 0.2, 0.09}
\definecolor{BleuTresFonce}{rgb}{0.215, 0.215, 0.36}
\definecolor{EgyptianBlue}{rgb}{0.06, 0.2, 0.65}
\newcommand\cyrillic[1]{
	{\fontencoding{OT2}\fontfamily{wncyr}\selectfont #1}
		}
\newcommand\mathcyr[1]{\text{\cyrillic{#1}}}
\newcommand\Sha{\textnormal{\mathcyr{Sh}}} 
\newtheorem*{itheorem}{Theorem}
\newtheorem{theorem}{Theorem}[section]
\newtheorem{corollary}[theorem]{Corollary}
\newtheorem{lemma}[theorem]{Lemma}
\newtheorem{proposition}[theorem]{Proposition}
\newtheorem{conjecture}[theorem]{Conjecture}
\theoremstyle{definition}
\newtheorem{remark}[theorem]{Remark}
\DeclareMathAlphabet{\pazocal}{OMS}{zplm}{m}{n}
\def\calL{\pazocal{L}}
\def\calN{\pazocal{N}}
\def\calO{\pazocal{O}}
\def\calP{\pazocal{P}}
\def\calR{\pazocal{R}}
\def\calS{\pazocal{S}}
\def\calT{\pazocal{T}}
\def\calX{\pazocal{X}}
\def\calY{\pazocal{Y}}
\def\calZ{\pazocal{Z}}
\DeclareMathOperator{\Lie}{Lie}
\DeclareMathOperator{\Ass}{Ass}
\DeclareMathAlphabet{\mathbbold}{U}{bbold}{m}{n}
\def\k{\mathbbold{k}}
\newcommand{\twocor}[3]{\ensuremath{ 
\vcenter{\hbox{\xymatrix@R=.4pc@C=.2pc{ %@R=.7mm@C=1mm{
    #2\ar@{-}[dr] & &#3\ar@{-}[dl]\\
	&*+[o][F-]{#1}\ar@{-}[d]&\\
	&*{}&
}}}}}
\newcommand{\lbincomb}[5]{\ensuremath{
\vcenter{\hbox{\xymatrix@R=.4pc@C=.2pc{ %}@R=.7mm@C=1mm{
			#3\ar@{-}[dr] &&#4\ar@{-}[dl] &\\
			&*+[o][F-]{#2}\ar@{-}[dr]&&#5\ar@{-}[dl]\\
			&&*+[o][F-]{#1}\ar@{-}[d]&\\
			&&*{}&
}}}}}
\newcommand{\rbincomb}[5]{\ensuremath{
\vcenter{\hbox{\xymatrix@R=.4pc@C=.2pc{ %}@R=.7mm@C=1mm{
			 &#4\ar@{-}[dr] &&#5\ar@{-}[dl] \\
			#3\ar@{-}[dr]&&*+[o][F-]{#2}\ar@{-}[dl]\\
			&*+[o][F-]{#1}\ar@{-}[d]&&\\
			&*{}&&
}}}}}
\newcommand{\binfork}[7]{\!\!\!\ensuremath{
\vcenter{\hbox{\xymatrix@R=.4pc@C=.2pc{ %@R=.7mm@C=1mm{
			#4\ar@{-}[dr] && #5\ar@{-}[dl]\,\,#6\ar@{-}[dr]&&#7\ar@{-}[dl]\\
			&*+[o][F-]{#2}\ar@{-}[dr] &   &*+[o][F-]{#3}\ar@{-}[dl]\\
			&&*+[o][F-]{#1}\ar@{-}[d]&&\\
			&&*{}&&
		}}}}\!\!\!\!}
\newcommand{\llbincomb}[7]{\ensuremath{\!\!\!\!
 \vcenter{\hbox{\xymatrix@R=.4pc@C=.2pc{ %@R=.7mm@C=1mm{
 			#4\ar@{-}[dr] && #5\ar@{-}[dl]&&\\
 			&*+[o][F-]{#3}\ar@{-}[dr]&&#6\ar@{-}[dl] \\
 			&&*+[o][F-]{#2}\ar@{-}[dr] &   &#7\ar@{-}[dl]\\
 			&&&*+[o][F-]{#1}\ar@{-}[d]&\\
 			&&&*{}&
 		}}}}}
\newcommand{\lbtcomb}[6]{\ensuremath{
\vcenter{\hbox{\xymatrix@R=.4pc@C=.2pc{ %}@R=.7mm@C=1mm{
			#3\ar@{-}[dr] &#4\ar@{-}[d]& #5\ar@{-}[dl]&\\
			&*+[o][F-]{#2}\ar@{-}[dr]&&#6\ar@{-}[dl]\\
			&&*+[o][F-]{#1}\ar@{-}[d]&\\
			&&*{}&
}}}}}
\newcommand{\ltbcomb}[6]{\ensuremath{
\vcenter{\hbox{\xymatrix@R=.4pc@C=.2pc{ %}@R=.7mm@C=1mm{
			#3\ar@{-}[dr] &&#4\ar@{-}[dl] &\\
			&*+[o][F-]{#2}\ar@{-}[dr]&#5\ar@{-}[d]&#6\ar@{-}[dl]\\
			&&*+[o][F-]{#1}\ar@{-}[d]&\\
			&&*{}&
}}}}}
\newcommand{\rrbincombfork}[9]{\ensuremath{
\vcenter{\hbox{\xymatrix@R=.4pc@C=.2pc{ %@R=.7mm@C=1mm{
			&#6\ar@{-}[dr] && #7\ar@{-}[dl]&#8\ar@{-}[dr] && #9\ar@{-}[dl]\\
			&&*+[o][F-]{#5}\ar@{-}[dr]& && *+[o][F-]{#3}\ar@{-}[dll]&\\
			&#4\ar@{-}[dr] &   &*+[o][F-]{#2}\ar@{-}[dl]&&\\
			&&*+[o][F-]{#1}\ar@{-}[d]&&&\\
			&&*{}&&&&
}}}}\!\!\!\!\!\!\!\!\!\!}
\begin{document}

\title{An effective criterion for Nielsen--Schreier varieties}

\author{Vladimir Dotsenko}

\address{ 
Institut de Recherche Math\'ematique Avanc\'ee, UMR 7501, Universit\'e de Strasbourg et CNRS, 7 rue Ren\'e-Descartes, 67000 Strasbourg CEDEX, France}

\email{vdotsenko@unistra.fr}

\author{Ualbai Umirbaev}

\address{Department of Mathematics, Wayne State University, Detroit, MI 48202, USA; Department of Mathematics, Al-Farabi Kazakh National University, Almaty, 050040, Kazakhstan; Institute of Mathematics and Mathematical Modeling, Almaty, 050010, Kazakhstan,}

\email{umirbaev@wayne.edu}

\dedicatory{To the memory of V.~A.~Artamonov (1946--2021)}

\date{}

\begin{abstract}
All algebras of a certain type are said to form a Nielsen--Schreier variety if every subalgebra of a free algebra is free.  This property has been perceived as extremely rare; in particular, only six Nielsen--Schreier varieties of algebras with one binary operation have been discovered in prior work on this topic. We propose an effective combinatorial criterion for the Nielsen--Schreier property in the case of algebras over a field of zero characteristic; in our approach, operads play a crucial role. Using this criterion, we show that the well known varieties of all pre-Lie algebras and of all Lie-admissible algebras are Nielsen--Schreier, and, quite surprisingly, that there are already infinitely many non-equivalent Nielsen--Schreier varieties of algebras with one binary operation and identities of degree four. 
\end{abstract}

\maketitle

%\setcounter{tocdepth}{2}
%\tableofcontents

\section{Introduction}

A classical theorem of combinatorial group theory asserts that every subgroup of a free group is free. Nielsen established this remarkable result in 1924 for finitely generated subgroups \cite{MR1512188};  in 1927, it was extended to all subgroups by Schreier \cite{MR3069472}. More generally, algebras satisfying certain identities are said to form a Nielsen--Schreier variety of algebras if every subalgebra of a free algebra is free. Five Nielsen--Schreier varieties of algebras with one binary operation have been known from classical results in the ring theory going back to the 1950s: all algebras (Kurosh \cite{MR0020986}), all Lie algebras (Shirshov and Witt \cite{MR0059892,MR77525}), all commutative or anticommutative algebras (Shirshov \cite{MR0062112}), and the trivial example of all algebras with zero product. 

The problem of classifying all Nielsen--Schreier varieties of algebras was originally recorded in the 1976 edition of Dniester Notebook by V. A. Parfenov (see the easily accessible English translation of a later edition \cite[Question 1.179]{MR2203726}), and then reiterated in \cite[Problem 1.1]{MR2021795} and in \cite[Problem 11.3.9]{MR2014326}. Additionally, the same question is raised in the survey on ``niceness theorems'' by Hazewinkel~\cite{MR2562563}, who writes (about the existing general freeness result of Fresse~\cite{MR1639900}):
\begin{quote}
I don't think it can be made to take care of the subobject freeness theorems; but there probably is a general theorem, yet to be formulated and proved, that can take care of those.
\end{quote}

The Dniester Notebook question of Parfenov also asked whether there existed Nielsen--Schreier varieties of algebras with one binary operation other than the five varieties mentioned above. This latter question was answered by the second author \cite{MR1302528} who proved the Nielsen--Schreier property for the variety of all algebras satisfying the identity $xx^2=0$, which has remained the only other known example beyond the five classical ones. In general, the Nielsen--Schreier property has been perceived as very rare. For instance, it has been known that among all possible varieties of Lie algebras only the variety of all Lie algebras and the variety of Lie algebras with zero Lie bracket are Nielsen--Schreier (Bakhturin \cite{MR241484}), and that for $n>2$ the only Nielsen--Schreier variety of $n$-Lie algebras is the variety of trivial algebras (Khashina \cite{MR1138459}). If one considers more general structure operations, the Nielsen--Schreier property was proved for varieties of algebras with structure operations that satisfy \emph{no identities} (Kurosh \cite{MR0131427}), and for varieties of algebras whose only identities are particular symmetry properties of structure operations under permutations of arguments (Polin~\cite{MR0237404}); more recently, a similar but more complicated result was established by Shestakov and the second author \cite{MR1899864} for the variety of Akivis algebras. The Nielsen--Schreier property is also true for a number of varieties closely related to that of Lie algebras: the varieties of Lie $p$-algebras (Witt \cite{MR77525}), of Lie superalgebras (Mikhalev and Shtern \cite{MR797705,MR847425}), and of Lie $p$-superalgebras (Mikhalev \cite{MR939518}). Finally, there is an elegant observation of Mikhalev and Shestakov \cite{MR3169596} that one can get new Nielsen--Schreier varieties by forming PBW-pairs with known ones; this gives new proofs for many of the above cases, as well as for the variety of Sabinin algebras, first proved to be Nielsen--Schreier in \cite{MR2855109}.

The second author established \cite{MR1302528,MR1399590} that over a field of zero characteristic a variety $\mathfrak{M}$ is Nielsen--Schreier if and only if the following two conditions hold:
\begin{enumerate}
\item for every free $\mathfrak{M}$-algebra $A$, its universal multiplicative enveloping algebra $U_\mathfrak{M}(A)$ is a free associative algebra,
\item for every homogeneous subalgebra $H$ of every free $\mathfrak{M}$-algebra $A$, the universal multiplicative enveloping algebra $U_\mathfrak{M}(A)$ is a free right $U_\mathfrak{M}(H)$-module. 
\end{enumerate}
These conditions are generally very hard to check, so this criterion has not been used to advance in classification of Nielsen--Schreier varieties. In our present work we found a criterion of a completely different flavour that is easy to check and holds in a large number of examples, allowing us to exhibit many new examples of Nielsen--Schreier varieties. Our criterion is expressed in terms of Gr\"obner bases for operads \cite{MR3642294,MR2667136}. 

\begin{itheorem}[{Th.~\ref{th:SchrComb}}]
Suppose that the operad $\calO$ encoding the given variety of algebras $\mathfrak{M}$ satisfies the following two properties:
\begin{itemize}
\item[(M1)] for the reverse graded path-lexicographic ordering, each leading term of the reduced Gr\"obner basis of the corresponding shuffle operad $\calO^f$ has the minimal leaf directly connected to the root,
\item[(M2)] there exists an ordering for which each leading term of the reduced Gr\"obner basis of the corresponding shuffle operad $\calO^f$ is a left comb whose second smallest leaf is a sibling of the minimal leaf.
\end{itemize}
Then the variety $\mathfrak{M}$ has the Nielsen--Schreier property.
\end{itheorem}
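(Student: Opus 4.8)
The plan is to verify the two conditions of the second author's criterion \cite{MR1302528,MR1399590} recalled above: that $(1)$ the universal multiplicative enveloping algebra $U_\mathfrak{M}(A)$ of every free $\mathfrak{M}$-algebra $A$ is a free associative algebra, and $(2)$ for every homogeneous subalgebra $H\subseteq A$ the algebra $U_\mathfrak{M}(A)$ is a free right $U_\mathfrak{M}(H)$-module via the natural map. I will use throughout the operadic description of $U_\mathfrak{M}(A)$, for $A$ the free $\mathfrak{M}$-algebra on a totally ordered set $X$: a basis is given by the normal forms, modulo the reduced Gr\"obner basis of $\calO^f$ for the order under consideration, among the tree monomials built from the generating operations whose leaves are labelled by $X$ together with one additional leaf $\star$, the product being grafting the root of the second factor into the $\star$-leaf of the first. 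Taking $\star$ smaller than every element of $X$, in each such monomial $\star$ is the leftmost leaf, so the path from the root to $\star$ --- the \emph{leftmost path} --- together with the subtrees hanging off it is well-defined.

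Condition $(1)$ will be deduced from $(M1)$. Along its leftmost path every marked monomial $T$ factors uniquely as a grafting product $T=C_0C_1\cdots C_{k-1}$ of \emph{indecomposable} marked monomials, where each $C_i$ is a generating operation with $\star$ in the slot of its minimal leaf and elements of $A$ in the remaining slots. I would show that, for the order in $(M1)$, $T=C_0\cdots C_{k-1}$ is a normal form precisely when every $C_i$ is one. One direction is immediate: $C_{k-1}$ is a subtree of $T$, and for $i<k-1$ the monomial $C_i$ is obtained from the subtree of $T$ rooted at the $i$-th vertex of the leftmost path by collapsing its leftmost child, hence divides it. For the other, suppose a leading term $\ell$ of the Gr\"obner basis were rooted at a vertex $v$ of $T$; if $v$ lies inside an $A$-slot we contradict normality of the corresponding $C_i$, and if $v$ is on the leftmost path then the minimal leaf of the subtree at $v$ is again $\star$, so by $(M1)$ it is absorbed by a leaf of $\ell$ adjacent to the root of $\ell$, and collapsing the part of the leftmost path below $v$ exhibits $\ell$ as a divisor of the corresponding $C_i$ --- again a contradiction. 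Thus grafting preserves normal forms, the monoid of normal marked monomials is free on the indecomposable ones, and $U_\mathfrak{M}(A)$ is the free associative algebra on the set of indecomposable normal marked monomials.

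Condition $(2)$ will use both hypotheses. Fixing $H\subseteq A$ homogeneous and an order $\prec$ as in $(M2)$ --- used also for the induced monomial orders on $A$ and, via the free generating set just found, on $U_\mathfrak{M}(A)$ --- I would pass to associated graded objects: $\operatorname{gr}_\prec U_\mathfrak{M}(A)=U_\mathfrak{M}(A)$ is already free associative, while $\operatorname{gr}_\prec H$ is the monomial subalgebra of $A$ spanned by the leading monomials $\operatorname{LM}_\prec(H)$. The core step is to prove that $\operatorname{gr}_\prec$ of the image of $U_\mathfrak{M}(H)$ is the free associative subalgebra of $U_\mathfrak{M}(A)$ generated by the marked monomials $\nu(\star,m)$ with $m\in\operatorname{LM}_\prec(H)$; equivalently, that multiplying out a word $\nu_1(\star,h_1)\cdots\nu_k(\star,h_k)$ in the generators of $U_\mathfrak{M}(H)$ produces the leading term $\nu_1(\star,\operatorname{lt}(h_1))\cdots\nu_k(\star,\operatorname{lt}(h_k))$ with no cancellation or hidden relation. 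This is where $(M2)$ is essential: any cancellation would force a leading term of the Gr\"obner basis of $\calO^f$ to occur along a leftmost path, and by $(M2)$ such a leading term is a left comb whose spine must run along that path with single leaves hanging off into the $A$-slots (the $\operatorname{lt}(h_i)$), while the requirement that its second smallest leaf be a sibling of its smallest pins down which slot carries the smallest of those; tracing this through turns the would-be reduction into a divisibility among leading monomials of $H$ that contradicts their being leading monomials. Granting the core step, a dimension count shows that $U_\mathfrak{M}(H)$ is itself free associative on $\bigoplus_\nu H$ and that $U_\mathfrak{M}(H)\to U_\mathfrak{M}(A)$ is injective; freeness of $U_\mathfrak{M}(A)$ as a right $U_\mathfrak{M}(H)$-module then follows from a standard Schreier-type transversal argument (lifting, through the associated graded, the basis of reduced words), giving $(2)$. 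Hence, by the criterion, $\mathfrak{M}$ is Nielsen--Schreier.

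The step I expect to be the main obstacle is the core confluence statement in the previous paragraph: controlling the leading terms of $U_\mathfrak{M}(H)$ for a subalgebra $H$ not yet known to be free, that is, showing that operadic normalization modulo the Gr\"obner basis of $\calO^f$ is compatible with the leftmost-path bookkeeping of the enveloping algebra. This is a genuine diamond-lemma type verification, and conditions $(M1)$ and $(M2)$ are precisely what make it succeed --- $(M1)$ confining any reduction of $U_\mathfrak{M}(A)$ itself to a single leftmost-path factor, and the comb shape together with the second-smallest-leaf condition of $(M2)$ controlling how reductions propagate across factors carrying elements of $H$. A further, minor technical point to dispatch along the way is the choice of a monomial basis of the free algebra over a generating set in the presence of repeated leaf labels, which must be fixed by a convention compatible with the chosen order.
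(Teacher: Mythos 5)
Your overall strategy --- verifying the two conditions of the classical criterion of \cite{MR1302528,MR1399590} instead of going through the homological machinery --- is a legitimately different route from the paper, which instead proves freeness of the subalgebras $H_n$ via Andr\'e--Quillen homology, K\"ahler differentials, a base-change spectral sequence, and a Shirshov-type reduction. Your treatment of condition (1) is correct and is essentially Lemma \ref{lm:freepartial} of the paper: (M1) guarantees that grafting normal forms at the minimal leaf cannot create a divisor by a leading term, so $U_\mathfrak{M}(A)\cong\partial(\calO)(V)$ is free associative on the min-indecomposable normal monomials.

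The genuine gap is in condition (2), and it is exactly the step you flag as the main obstacle. You propose to handle an arbitrary homogeneous subalgebra $H\subseteq A$ directly by leading-term bookkeeping, but: (i) for a subalgebra of a free nonassociative algebra the set $\mathrm{LM}_\prec(H)$ is a SAGBI-type object --- it is not controlled by the leading monomials of any generating set of $H$, linear combinations of products of elements of $H$ can have cancelling leading terms, and nothing in (M1)--(M2) tames this for a general $H$; (ii) your ``dimension count'' showing that $U_\mathfrak{M}(H)$ is free associative and injects into $U_\mathfrak{M}(A)$ presupposes knowing the size of $U_\mathfrak{M}(H)$ for an abstract algebra $H$ not yet known to be free --- this is precisely the PBW theorem for universal envelopes, which the paper derives from the left-comb half of (M2) by proving that $\partial(\calO)$ is a free \emph{right} $\calO$-module (Lemma \ref{lm:partialfree}), an ingredient entirely absent from your proposal; and (iii) the free-associative structure of $U_\mathfrak{M}(A)$ you use is built from the (M1) normal forms while your filtration comes from the (M2) order, and the compatibility of the two (in particular, that the leading-monomial filtration is multiplicative for the $U_\mathfrak{M}(A)$ product, which involves operadic reductions that mix the slots) is never addressed. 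The paper circumvents all three difficulties by proving the module-freeness only for algebras with \emph{zero} operations and arbitrary subspaces (Lemma \ref{lm:freemodule}), where no reductions occur and the two-sorted left-comb combinatorics closes up cleanly, and then transporting the statement to arbitrary pairs $(A,H)$ through the PBW filtration. Until your core confluence step is actually carried out --- and I do not see how to do so for general $H$ without first reducing to the zero-multiplication case as the paper does --- the proof of condition (2), and hence of the theorem, is incomplete.
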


Among the new varieties that we show to have the Nielsen--Schreier property using this criterion, the following examples are perhaps the most interesting:
\begin{itemize}
\item the variety of pre-Lie (also known as right-symmetric) algebras,
\item the variety of Lie-admissible algebras, 
\item the variety of nonassociative algebras satisfying the identity \[xx^2+\alpha x^2x=0,\] for every given $\alpha\ne 1$,
\item the variety of nonassociative algebras satisfying the identity \[x(x(\cdots(xx^2)))=0.\] 
\end{itemize}
The last two examples, both of which generalize the identity $xx^2=0$ of \cite{MR1302528}, show that, over a field of zero characteristic, the set of Nielsen--Schreier varieties of algebras with just one binary operation is infinite in two different ways, containing both countable families with growing degrees of identities and parametric families with fixed degrees of identities; this suggests that Nielsen--Schreier varieties are not as rare as they were thought to be. We observe, however, that most of the identities $xx^2+\alpha x^2x=0$ are quasi-equivalent (in the sense of Albert \cite{MR32609}) to the identity $xx^2=0$; to exhibit an infinite family of Nielsen--Schreier varieties which are not related by quasi-equivalence, one has to take a similar family \[\alpha_1 (x^2x)x+\alpha_2(xx^2)x+\alpha_3(x^2)^2+\alpha_4 x(x^2x)+\alpha_5x(xx^2)=0\] of varieties defined by identities of degree four (we show that such variety is Nielsen--Schreier for generic values of $\alpha_1,\ldots,\alpha_5$).  

The proof of our criterion mainly relies on homotopical algebra for operads and their algebras. Our work should be viewed as another step in the programme of applying operadic methods to classical questions about varieties of algebras, in the spirit of the first author's work with Tamaroff \cite{MR4300233} on a functorial criterion for PBW-pairs of varieties. While our methods may look ``foreign'' to the reader whose intuition comes from classical ring theory, a big advantage of them lies in applicability of the wealth of methods not available on the level of algebras. It is also an immediate consequence of our work that the Nielsen--Schreier property for a variety of algebras automatically implies the Nielsen--Schreier property for the corresponding variety of superalgebras; previously, such results had to be established separately, see, for example, \cite{MR2094332,MR797705,MR847425}. All that said, using operads forces us to only work with identities equivalent to multilinear ones, and so we focus on varieties of algebras over a field of zero characteristic; developing systematic methods for treating the Nielsen--Schreier property in positive characteristic remains an open problem. 

We wish to dedicate this paper to the memory of Vyacheslav Alexandrovich Artamonov who passed away in June 2021. Not only had he been interested in Nielsen--Schreier varieties of algebras throughout his mathematical life \cite{ArtRev,MR0292901,MR3739118,MR2021795}, but also his mathematical heritage is strongly connected with the operad theory: while operads were first defined by J.~P.~May in 1971 in his work on iterated loop spaces \cite{MR2177746,MR0420610}, the same notion seems to have been first introduced under a much more technical name of a ``clone of multilinear operations'' in Artamonov's 1969 paper \cite{MR0237408}. 

\medskip

The paper is organized as follows. In Section \ref{sec:recoll}, we give the necessary background, making emphasis on basics of the operad theory for the reader whose intuition comes from the ring theory. In Section \ref{sec:homol}, we prove a homological criterion of freeness for operadic algebras. In Section \ref{sec:combcrit}, we state and prove our combinatorial criterion. Finally, in Sections \ref{sec:examplesfirst}--\ref{sec:exampleslast}, we present numerous applications of our result, exhibiting many new Nielsen--Schreier varieties of algebras.

\section{Conventions and recollections}\label{sec:recoll}

All vector spaces in this paper are defined over a field $\k$ of zero characteristic. We use somewhat freely the language of category theory \cite{MR0354798}, but go into great detail to explain basics of the operad theory to the reader whose intuition comes from the ring theory. Further details are available in the monographs \cite{MR3642294,MR2954392}.

\subsection{Nielsen--Schreier varieties of algebras}\label{sec:vars}

Let $\mathfrak{M}$ be a variety of algebras, that is a class of algebras over $\k$ with certain structure operations satisfying certain identities. We shall assume that the set of basic structure operations of $\mathfrak{M}$ does not include any elements of arity $0$ or $1$ and has finitely many operations of each arity; thus, we shall not consider unital associative algebras or Rota--Baxter type algebras (which are always equipped with a Rota--Baxter operator $R$ of arity one), or vertex algebras (where one has infinitely many binary operations). Since we work over a field of zero characteristic, every system of identities is equivalent to a system of multilinear ones; sometimes we shall use non-multilinear identities for brevity, and we shall freely move between multilinear and non-multilinear descriptions of the same variety.

For a set $X$, we shall denote by $F_\mathfrak{M}\langle X\rangle$ the free $\mathfrak{M}$-algebra generated by $X$. It has a grading with respect to which all elements of $X$ have degree one. For an element $f\in F_\mathfrak{M}\langle X\rangle$, we denote by $\hat{f}$ the nonzero homogeneous component of $f$ of maximal degree. A system of elements $f_1,\ldots,f_p\in F_\mathfrak{M}\langle X\rangle$ is said to be \emph{irreducible} if no element $\hat{f_i}$ belongs to the subalgebra generated by $\hat{f_j}$ with $j\ne i$. Such a system of elements is said to be \emph{algebraically independent} if the obvious map from the free algebra on $p$ generators to the subalgebra these elements generate is an isomorphism. 

\begin{proposition}[{\cite{MR0292901,MR224663}}]
Over an infinite field, the following properties of a variety $\mathfrak{M}$ are equivalent:
\begin{itemize}
\item every irreducible system of elements in every free $\mathfrak{M}$-algebra is algebraically independent,
\item every subalgebra of every free $\mathfrak{M}$-algebra is free. 
\end{itemize}
\end{proposition}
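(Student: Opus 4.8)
The plan is to pass from subalgebras of a free $\mathfrak{M}$-algebra $F=F_\mathfrak{M}\langle X\rangle$ to their leading-term data. As $F$ is $\mathbb{N}$-graded by total degree, every nonzero $h\in F$ has a top homogeneous component $\hat h$; for a subalgebra $H\le F$, let $\bar H\le F$ be the subalgebra generated by all the leading terms $\hat h$, $h\in H$. Then $\bar H$ is a homogeneous subalgebra, and a short computation identifies its degree-$n$ component $\bar H_n$ with $\{\hat h: h\in H,\ \deg h=n\}\cup\{0\}$: an element of $\bar H_n$ is a linear combination of degree-$n$ monomials in the $\hat h$'s, and lifting each such monomial (replace every $\hat h$ by the corresponding $h$) and repeating the combination produces an element of $H$ whose top component is the given one. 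Two lemmas, both proved by induction on degree, form the core. First, if $S\subseteq H$ and the leading terms $\{\hat s:s\in S\}$ generate $\bar H$, then $S$ generates $H$: for $h\in H$ of degree $n$, write $\hat h$ through the $\hat s$'s, form the corresponding expression $\tilde h$ in the $s$'s, and note that $h-\tilde h\in H$ has degree $<n$, so induction applies. Second, if in addition $\{\hat s\}$ is a \emph{free} generating set of $\bar H$, then $S$ is a free generating set of $H$: any relation $Q(s)=0$ with $Q\ne 0$ would, on passing to its top-weight component (weighting each $s$ by $\deg\hat s$), produce a nontrivial relation among the $\hat s$'s. I shall also use two standard facts about free $\mathfrak{M}$-algebras on a graded set: the graded dimension of $F_\mathfrak{M}\langle Y\rangle$ depends only on the number of generators of each degree (it is computed by the Schur functor of the operad encoding $\mathfrak{M}$, applied to $\k Y$), and adjoining $t$ generators of degree $n$ to a free algebra enlarges the degree-$n$ component by exactly $\k^{t}$, all composites involving a new generator having degree $>n$.

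For \emph{(1) $\Rightarrow$ (2)}, fix $H\le F$ and construct a homogeneous generating set of $\bar H$ degree by degree: with degrees below $n$ treated, choose a linear complement $C_n$ of $\langle\bar H_1,\dots,\bar H_{n-1}\rangle_n$ inside $\bar H_n$ and a basis $S_n$ of $C_n$, and put $\bar S=\bigcup_n S_n$. An easy induction shows $\langle\bar S\rangle=\bar H$, and a purely linear-algebraic argument shows $\bar S$ is irreducible: a degree-$n$ element of the subalgebra generated by $\bar S\setminus\{\bar s\}$ lies in $\operatorname{span}(S_n\setminus\{\bar s\})+\langle\bar H_1,\dots,\bar H_{n-1}\rangle_n$, a subspace meeting $C_n$ only in $\operatorname{span}(S_n\setminus\{\bar s\})$, which does not contain $\bar s$. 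Since both irreducibility and algebraic independence of a system may be tested on its finite subsystems, hypothesis (1) applies to $\bar S$ and yields that $\bar H$ is free on $\bar S$. Lifting each $\bar s\in\bar H_n$ to some $s\in H$ with $\hat s=\bar s$ and invoking the second lemma, we conclude that $H$ is free.

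For \emph{(2) $\Rightarrow$ (1)}, let $f_1,\dots,f_p$ be irreducible, set $H=\langle f_1,\dots,f_p\rangle$ and $\bar H=\langle\hat f_1,\dots,\hat f_p\rangle$, and prove by induction on $n$ that $\{\hat f_i:\deg\hat f_i\le n\}$ is a free basis of $\bar H^{(n)}:=\langle\hat f_i:\deg\hat f_i\le n\rangle$; letting $n\to\infty$ and then applying the second lemma shows that $\{f_i\}$ is a free basis of $H$, i.e.\ is algebraically independent. In the inductive step, write $A=\bar H^{(n-1)}$, free on $S_0=\{\hat f_i:\deg\le n-1\}$ by the inductive hypothesis, and let $g_1,\dots,g_t$ be the $\hat f_i$ of degree exactly $n$; irreducibility forces $g_1,\dots,g_t$ to be linearly independent modulo $A_n$, since otherwise some $\hat f_{i_0}$ would lie in $\langle\hat f_i:i\ne i_0\rangle$. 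Consider the graded surjection $\pi\colon C:=F_\mathfrak{M}\langle S_0\sqcup\{g_1,\dots,g_t\}\rangle\twoheadrightarrow\bar H^{(n)}$. The structural facts above, combined with this linear independence, show that $\dim C_m=\dim\bar H^{(n)}_m$, so that $\pi$ restricts to an isomorphism in each degree $m\le n$. By hypothesis (2) the algebra $\bar H^{(n)}$ is itself free, so its graded dimension is that of a free algebra; since $C$ has no generators in any degree $>n$, comparing the two graded dimensions and taking the minimal degree of a hypothetical nonzero element of $\ker\pi$ gives a contradiction. Hence $\pi$ is an isomorphism, which completes the induction.

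The step I expect to be the main obstacle is the first lemma — that leading terms generating $\bar H$ already force the original elements to generate $H$ — together with the identification of $\bar H_n$ with the degree-$n$ leading terms; these underpin the degree-by-degree bookkeeping used in both implications. A secondary subtlety, specific to \emph{(2) $\Rightarrow$ (1)}, is that freeness of $\{\hat f_i\}$ cannot be deduced from irreducibility alone (that would amount to the statement under proof), so the Hilbert-series comparison — available precisely because hypothesis (2) makes $\bar H^{(n)}$ free — is what breaks the apparent circularity. Over an infinite field all these manipulations are routine; the infinitude of $\k$ is used only through the reduction of arbitrary identities to multilinear ones and the standard genericity arguments.
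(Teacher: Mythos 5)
The paper does not prove this proposition at all: it is imported from Artamonov and Lewin with a citation, so there is no internal argument to compare against, and your write-up has to be judged on its own. Your setup and the implication (1)$\Rightarrow$(2) are correct and follow the classical leading-term (Lewin-style) route: the identification of $\bar H_n$ with the degree-$n$ leading terms, the two transfer lemmas (generation, then freeness via the top-weight component), the degree-by-degree construction of the irreducible homogeneous system $\bar S$, and the reduction of irreducibility and independence to finite subsystems all check out.

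The one genuine gap is in (2)$\Rightarrow$(1), at the sentence ``by hypothesis (2) the algebra $\bar H^{(n)}$ is itself free, so its graded dimension is that of a free algebra.'' Hypothesis (2) only provides an \emph{abstract} isomorphism of $\bar H^{(n)}$ with a free algebra; it does not provide a \emph{homogeneous} free generating set, and without one the phrase ``its graded dimension is that of a free algebra'' carries no information: your minimal-degree-of-$\ker\pi$ argument needs to read off a degree profile of free generators of $\bar H^{(n)}$, all concentrated in degrees $\le n$, and match it against that of $C$, which is exactly what an ungraded isomorphism does not give. The gap is fillable, but it requires an extra idea you do not state. Take a homogeneous minimal generating set $T$ of $\bar H^{(n)}$ (a homogeneous lift of a basis of the indecomposables $\bar H^{(n)}/D(\bar H^{(n)})$); since the indecomposable quotient is intrinsic, $|T|$ equals the rank of the abstract free algebra, so the canonical graded surjection $F_\mathfrak{M}\langle T\rangle\twoheadrightarrow\bar H^{(n)}$ becomes, after identifying $\bar H^{(n)}$ with $F_\mathfrak{M}\langle Y\rangle$ and $T$ with $Y$, a surjective endomorphism of a finitely generated free algebra. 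Such an endomorphism induces a bijection on indecomposables, hence after a linear change of generators sends each generator to itself plus terms of higher standard degree, hence is injective by a lowest-degree-component argument; in other words, finitely generated free algebras in any such variety over an infinite field are Hopfian. This shows that $\bar H^{(n)}$ is free on the homogeneous set $T$, after which your Hilbert-series comparison (the number of degree-$k$ free generators equals $\dim \bar H^{(n)}_k-\dim F_\mathfrak{M}\langle T_{<k}\rangle_k$ and is determined recursively by the graded dimension) does force $\ker\pi=0$. With that insertion the proof is complete.
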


We call a variety $\mathfrak{M}$ satisfying either of these equivalent properties a \emph{Nielsen--Schreier variety}. One interesting consequence of the Nielsen--Schreier property is that, by a striking contrast to the case of polynomial rings, one can give a complete description of the group of automorphisms of a finitely generated free algebra. 

\begin{theorem}[{\cite[Th.~4]{MR224663}}]\label{th:tame}
Let $\mathfrak{M}$ be a Nielsen--Schreier variety of algebras, and let $A=F_\mathfrak{M}\langle x_1,\ldots,x_n\rangle$ be a finitely generated free algebra in this variety. The group of automorphisms of $A$ is generated by the permutations of $x_1,\ldots,x_n$ together with the automorphisms 
 \[
x_i\mapsto  
\begin{cases}
\alpha x_1+w(x_2,\ldots,x_n),\,\,\quad i=1,\\
\quad\qquad x_i,\quad\quad\qquad\qquad i\ne 1.
\end{cases}
 \]
In other words, every automorphism of a finitely generated free $\mathfrak{M}$-algebra is tame.  
\end{theorem}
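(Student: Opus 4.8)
The plan is to argue by induction on the \emph{degree} $d(\phi):=\sum_{i=1}^n\deg\phi(x_i)$ of an automorphism $\phi$, where $\deg a$ denotes the top degree among the nonzero homogeneous components of $a$; since $\phi$ is injective, $d(\phi)\ge n$. Two preliminary observations, both relying on the standing assumption that $\mathfrak{M}$ has no operations of arity $0$ or $1$ (so every product in a free $\mathfrak{M}$-algebra has degree at least $2$), will be used repeatedly. First, if $g_1,\dots,g_m$ generate a free $\mathfrak{M}$-algebra $A$ of rank $n$, then the degree-$1$ components $g_j^{(1)}$ span $A_1$; in particular $m\ge n$, and when $m=n$ the $g_j^{(1)}$ form a basis of $A_1$. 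Second, if the top components $\hat g_1,\dots,\hat g_n$ of an $n$-element generating set are homogeneous and algebraically independent, then a standard leading-term computation for the degree filtration shows $\langle\hat g_1,\dots,\hat g_n\rangle=A$ (a homogeneous $a\in A$, written as a value $Q(g_1,\dots,g_n)$ of a polynomial, equals the suitably weighted top-degree part of $Q$ evaluated at the $\hat g_i$, since any higher-weight part of $Q$ would give a nontrivial relation among the $\hat g_i$); hence $A$ would then be a free $\mathfrak{M}$-algebra on homogeneous generators of degrees $\deg\hat g_1,\dots,\deg\hat g_n$.

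For the base case $d(\phi)=n$ the images $\phi(x_i)$ are all linear, so $\phi$ restricts to an element of $\mathrm{GL}(A_1)\cong\mathrm{GL}_n(\k)$; since the latter group is generated by permutation matrices, diagonal matrices and transvections, all of which are instances of the generators in the statement (after conjugating by a transposition to move the modified coordinate to $x_1$), $\phi$ is a product of such generators.

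For the inductive step assume $d(\phi)>n$, write $f_i:=\phi(x_i)$, let $\hat f_i$ be its top component and $d_i:=\deg\hat f_i$, so that not all $d_i$ equal $1$. By the second observation the $\hat f_i$ cannot be algebraically independent: otherwise $A$ would be free on homogeneous generators of degrees $d_1,\dots,d_n$, and comparing degree-$1$ parts with $A=F_\mathfrak{M}\langle x_1,\dots,x_n\rangle$ forces all $d_i=1$. Now the Nielsen--Schreier hypothesis enters, in the form ``irreducible $\Rightarrow$ algebraically independent'' of the Proposition above: the system $\{\hat f_1,\dots,\hat f_n\}$, being not algebraically independent, must be reducible, so $\hat f_k\in\langle\hat f_j:j\ne k\rangle$ for some $k$. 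By the first observation $d_k\ge 2$ (a degree-$1$ relation would express $f_k^{(1)}$ through the remaining $f_j^{(1)}$, contradicting linear independence), and after a permutation generator we may take $k=n$, so $\hat f_n=P(\hat f_1,\dots,\hat f_{n-1})$ for a weighted-homogeneous $P$. Let $\psi$ be the elementary automorphism $x_n\mapsto x_n+P(x_1,\dots,x_{n-1})$, a generator up to conjugation by the transposition swapping $x_1$ and $x_n$. Then $\phi\circ\psi^{-1}$ fixes $f_1,\dots,f_{n-1}$ and sends $x_n$ to $f_n-P(f_1,\dots,f_{n-1})$, which is nonzero (else $f_1,\dots,f_{n-1}$ generate $A$, contradicting $m\ge n$) and whose top components cancel, so it has degree below $d_n$. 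Hence $d(\phi\circ\psi^{-1})<d(\phi)$, and by induction $\phi\circ\psi^{-1}$, and therefore $\phi=(\phi\circ\psi^{-1})\circ\psi$, is a product of the generators in the statement.

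The main obstacle is the passage, via the Nielsen--Schreier property, from algebraic dependence of the leading terms $\hat f_i$ to \emph{reducibility} of the system: this is exactly where the hypothesis is used, and it is the step that fails for polynomial algebras in three or more variables (Nagata automorphism). The supporting ingredient demanding the most care is the leading-term calculus for the degree filtration behind the second preliminary observation, namely that algebraically independent leading terms of a generating set already generate the whole algebra; the remaining steps are routine bookkeeping with the grading.
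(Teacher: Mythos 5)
The paper does not prove this theorem itself --- it is stated with a citation to Lewin's paper \cite[Th.~4]{MR224663} --- so there is no internal proof to compare against; your argument is the classical Nielsen-type reduction (induction on total degree, passing via the Nielsen--Schreier property from algebraic dependence of the leading terms $\hat f_i$ to reducibility of the homogeneous system, then cancelling the top term of one generator by an elementary automorphism), which is essentially the argument of the cited reference and is correct. The one step that genuinely needs the care you gave it is applying the ``irreducible $\Rightarrow$ algebraically independent'' proposition to the system of \emph{leading terms} $\{\hat f_1,\dots,\hat f_n\}$ rather than to $\{f_1,\dots,f_n\}$ (which is already algebraically independent and would yield nothing), together with the observation that $d_k\ge 2$ so that the resulting elementary automorphism strictly lowers the degree.
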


Moreover, a result of the second author shows that in this case it is possible to describe the automorphism group of each free algebra by generators and relations \cite{MR2331759}. In the case of Lie algebras, one can also use freeness of subalgebras of free Lie algebras to perform certain cohomology computations \cite{MR2384793}, and to establish analogues of the Schreier formula for groups \cite{MR1764887}. It would be interesting to explore similar applications for numerous varieties whose Nielsen--Schreier property is proved in this paper. We hope to address this elsewhere. 

\subsection{A non-criterion of Nielsen--Schreier varieties}\label{sec:Burgin}

In \cite{MR0417028}, Burgin introduced a certain ``property (S)'' of a variety of algebras that he claimed to be equivalent to the Nielsen--Schreier property. Our main criterion bears a certain resemblance with that of Burgin, and we feel that it is not unreasonable to discuss the latter. We shall now demonstrate that the claim of \cite{MR0417028} is not true at face value even under the most lenient interpretation. (There are some doubts about this work already in the \texttt{MathReviews} review \cite{ArtRev} by Artamonov, who however questioned the proof, not the result itself.) 

Following \cite{MR0417028}, we say that a variety of algebras has the property (S) if in every minimal identity (that is, an identity that does not follow from identities of smaller degrees) each variable appears as an argument of the top level operation in at least one of the monomials in the identity. The main result of \cite{MR0417028} asserts that a homogeneous variety has the property (S) if and only if it is a Nielsen--Schreier variety. 

The description of the property (S) is vague enough to lead to some minor issues right away. In particular, in the introduction to \cite{MR0417028}, it is claimed that the variety of (left) Leibniz algebras, that is algebras satisfying the identity
 \[
a_1(a_2a_3)=(a_1a_2)a_3+a_2(a_1a_3),
 \]
satisfies the property (S). (At that point, the name ``Leibniz algebras'' was not yet invented, but the corresponding variety of algebras was studied by Bloh in \cite{MR0193114} under the name ``left D-algebras''.) This claim is not correct: while the identity above does satisfy the combinatorial condition of the property (S), its consequence of the same degree
 \[
(a_1a_2)a_3+(a_2a_1)a_3=0
 \]
obviously fails the corresponding combinatorial condition. (In fact, the variety of Leibniz algebras does not have the Nielsen--Schreier property \cite{MR1603357}.)

The example of Leibniz algebras, however, is not the biggest problem that one encounters: there exists an example of a variety that does not have the Nielsen--Schreier property but obviously satisfies the property (S), whatever interpretation of that property one may choose. It is the variety of \emph{commutative} algebras satisfying the identity 
 \[
(a_1a_2)a_3+(a_2a_3)a_1+(a_3a_1)a_2=0,
 \]
studied in the recent years by many different authors  \cite{MR3247244,MR1489904,MR3598575}. This identity transforms under the action of $S_3$ as the trivial representation, so the space of identities of degree $3$ is one-dimensional, and the combinatorial condition of the property (S) is satisfied; in fact, combinatorially there is no difference between the multilinear mock-Lie identity and the Jacobi identity. However, due to the commutativity of the operation, this identity is equivalent to the nil identity $x^2x=0$. It is well known that these algebras are Jordan and in particular power associative. To show that it is not a Nielsen--Schreier variety, it is enough to note that in every nontrivial subvariety of power associative algebras, the subalgebra of the free algebra on one generator $x$ generated by $x^2$ and $x^3$ is not free. 

\subsection{The language of symmetric operads}

It is well known that over a field of characteristic zero every system of algebraic identities is equivalent to multilinear ones. Let us briefly explain how this leads to the notion of an operad. To a variety of algebras $\mathfrak{M}$ without constants (operations of zero arity), one may associate the datum 
 \[
\calO=\calO_\mathfrak{M}:=\{\calO(n)\}_{n\ge 1},
 \]
where $\calO(n)$ is the $S_n$-module of multilinear elements (that is, elements of multidegree $(1,1,\ldots,1)$) in the free algebra $F_\mathfrak{M}\langle x_1,\ldots,x_n\rangle$. We note that the underlying vector space of each free algebra can be reconstructed from this datum as  
 \[
\calO(V):=\bigoplus_{n\ge 1}\calO(n)\otimes_{\k S_n} V^{\otimes n},
 \]
where $V$ is the vector space spanned by the generators of that algebra.

Each individual free algebra only has its own algebra structure. If we consider all free algebras at the same time, there is something new that emerges: in the language of category theory, we have a \emph{monad}. To see what this means, we regard the assignment to a vector space $V$ the free algebra generated by $V$ as a functor from the category of vector spaces to itself. Nothing prevents us from applying that functor twice, considering $F_\mathfrak{M}\langle F_\mathfrak{M}\langle V\rangle\rangle$; elements of that vector space are all possible substitution schemes of $\mathfrak{M}$-polynomials into each other. 
Of course, there is a canonical linear map
 \[
\tau_V\colon F_\mathfrak{M}\langle F_\mathfrak{M}\langle V\rangle\rangle\to F_\mathfrak{M}\langle V\rangle,
 \]
which says that for a substitution scheme, we can actually perform a substitution, and write an $\mathfrak{M}$-polynomial of $\mathfrak{M}$-polynomials as an $\mathfrak{M}$-polynomial. This map $\tau$ gives our functor a monad structure, meaning that it is associative: if we apply our functor three times, forming the gigantic algebra
 \[
F_\mathfrak{M}\langle F_\mathfrak{M}\langle F_\mathfrak{M}\langle V\rangle\rangle\rangle,
 \]
there are two different maps to $F_\mathfrak{M}\langle V\rangle$, depending on the order of substitutions, and those give the same result. (Strictly speaking, to talk about a monad, one should also discuss the unitality, but the compatibility of the maps $\tau_V$ with the obvious embedding maps $\imath_V\colon V\to F_\mathfrak{M}\langle V\rangle$ is too trivial to spend time on it.) This can be restricted to multilinear elements, and it defines what is called an operad structure on the sequence $\{\calO(n)\}_{n\ge 1}$; the definition is spelled explicitly below. In the language of operads, our above constraints on varieties of algebras are as follows. Absence of structure operations of arity $0$ is described by the word ``reduced'', absence of structure operations of arity $1$ is described by the word ``connected'' (note that there always exists one ``trivial'' operation of arity $1$, which is identical on every element of every algebra; this operation is not regarded as a structure operation and is permitted). Thus, throughout this paper we work with reduced connected operads.

One key difference between the language of varieties and the language of operads may have already become apparent. Namely, in terms of varieties of algebras, properties like commutativity and associativity are on the same ground: both express certain identities in algebras. In terms of operads, commutativity of an operation is a symmetry type, so it is encoded on the level of $S_n$-modules, while associativity of an operation is an identity: a relation between results of substitution of operations into one another. This distinction will be very important for us. A clean interpretation of how an operad structure formalizes the notion of substitutions of multilinear maps uses the language of linear species, which we shall now recall. 

The theory of species of structures originated at the concept of a combinatorial species, invented by Joyal \cite{MR633783} and presented in great detail in \cite{MR1629341}. The same definitions apply if one changes the target symmetric monoidal category; in particular, if one considers the category of vector spaces, one obtains what is called a linear species. Let us recall some key definitions, referring the reader to~\cite{MR2724388} for further information.  

A \emph{linear species} is a contravariant functor from the groupoid of finite sets (the category whose objects are finite sets and whose morphisms are bijections) to the category of vector spaces. This definition is not easy to digest at a first glance, and a reader with intuition coming from varieties of algebras is invited to think of the value $\calS(I)$ of a linear species $\calS$ on a finite set $I$ as of the set of multilinear operations of type $\calS$ (accepting arguments from some vector space $V_1$ and assuming values in some vector space $V_2$) whose inputs are indexed by~$I$. For example, one can define the Lie species whose component $\Lie(I)$ is the vector space of all multilinear Lie polynomials whose arguments are in one-to-one correspondence with the set $I$; similarly, the associative species has as the component $\Ass(I)$ the vector space of all multilinear associative noncommutative polynomials whose arguments are in one-to-one correspondence with~$I$. A linear species $\calS$ is said to be \emph{reduced} if $\calS(\varnothing)=0$; this means that we do not consider ``constant'' multilinear operations. (This is perhaps the only situation where several different terminologies clash in our paper: we use the word ``reduced'' for linear species to indicate that the value on the empty set is zero, and for Gr\"obner bases to indicate that we consider the unique Gr\"obner basis of a certain irreducible form.)

Sometimes, a ``skeletal definition'' is preferable: the category of linear species is equivalent to the category of symmetric sequences $\{\calS(n)\}_{n\ge 0}$, where each $\calS(n)$ is a right $S_n$-module, a morphism between the sequences $\calS_1$ and $\calS_2$ in this category is a sequence of $S_n$-equivariant maps $f_n\colon \calS_1(n)\to\calS_2(n)$. While this definition may seem more appealing, the functorial definition simplifies the definitions of operations on linear species: it is harder to comprehend the two following definitions skeletally.

The \emph{Cauchy product} of two linear species $\calS_1$ and $\calS_2$ is defined by the formula
 \[
(\calS_1\cdot\calS_2)(I):=\bigoplus_{I=I_1\sqcup I_2}\calS_1(I_1)\otimes\calS_2(I_2).
 \]
One may consider monoids with respect to the Cauchy product which are called \emph{twisted associative algebras} \cite{MR3642294}, and are useful when working with universal multiplicative enveloping algebras of algebras in different varieties of algebras; we shall use them meaningfully below. Additionally, the crucial \emph{composition product} of linear species is compactly expressed via the Cauchy product as
 \[
\calS_1\circ\calS_2:=\bigoplus_{n\ge 0}\calS_1(\{1,\ldots,n\})\otimes_{\k S_n}\calS_2^{\cdot n}, 
 \]
that is, if one unwraps the definitions, 
 \[
(\calS_1\circ\calS_2)(I)
=\bigoplus_{n\ge 0}\calS_1(\{1,\ldots,n\})\otimes_{\k S_n}\left(\bigoplus_{I=I_1\sqcup \cdots\sqcup I_n}\calS_2(I_1)\otimes\cdots\otimes \calS_2(I_n)\right).
 \]
The linear species $\mathbbold{1}$ which vanishes on a finite set $I$ unless $|I|=1$, and whose value on $I=\{a\}$ is given by $\k a$ is the unit for the composition product: we have $\mathbbold{1}\circ\calS=\calS\circ\mathbbold{1}=\calS$.

Formally, a \emph{symmetric operad} is a monoid with respect to the composition product. It is just the multilinear version of substitution schemes of free algebras discussed above, but re-packaged in a certain way. The advantage is that existing intuition of monoids and modules over them, available in any monoidal category \cite{MR0354798}, can be used for studying varieties of algebras. In particular, one can talk about left or right modules over operads, a notion which does not emerge too frequently in the context of varieties of algebras (though right ideals of an operad have been extensively studied in the theory of PI-algebras under the name ``T-spaces''). 

The free symmetric operad generated by a linear species $\calX$ is defined as follows. Its underlying linear species is the species $\calT(\calX)$ for which $\calT(\calX)(I)$ is spanned by decorated rooted trees (including the rooted tree without internal vertices and with just one leaf, which corresponds to the unit of the operad): the leaves of a tree must be in bijection with $I$, and each internal vertex $v$ of a tree must be decorated by an element of $\calX(I_v)$, where $I_v$ is the set of incoming edges of $v$. Such decorated trees should be thought of as tensors: they are linear in each vertex decoration. The operad structure is given by grafting of trees onto each other. We remark that if one prefers the skeletal definition, one can talk about the free operad generated by a collection of $S_n$-modules, but the formulas will become heavier.

As an example of a free symmetric operad, let us consider the linear species $\calX$ for which 
 \[
\calX(I)=
\begin{cases}
\k e_a\wedge e_b, I=\{a,b\},\\
\qquad 0, \quad |I|\ne 2.
\end{cases}
 \]
(Essentially, this is the species encoding one binary anticommutative operation.) The free operad $\calT(\calX)$ has its component $\calT(\calX)(I)$ spanned by all binary trees with leaves indexed by $I$ which represent anticommutative nonassociative monomials: exchange of the two subtrees of a tree multiplies it by $-1$; as we said before, symmetries of operations are already encoded on the level of free operads, before any further identities are introduced. 

\subsection{Shuffle operads and Gr\"obner bases}

We shall now recall how to develop a workable theory of normal forms in operads using the theory of Gr\"obner bases developed by the first author and Khoroshkin \cite{MR2667136}. It is important to emphasize that it is in general extremely hard to find convenient normal forms in free algebras for a given variety~$\mathfrak{M}$. However, focusing on multilinear elements simplifies the situation quite drastically: for instance, for a basis in multilinear elements for the operad controlling Lie algebras one may take all left-normed commutators of the form
$[[[a_1,a_{i_2}],\cdots],a_{i_n}]$, 
where $i_2$,\ldots, $i_n$ is a permutation of $2$,\ldots,$n$; by contrast, all known bases in free Lie algebras \cite{MR1231799} are noticeably harder to describe.

To define Gr\"obner bases for operads, one builds, step by step, an analogue of the theory of Gr\"obner bases for noncommutative associative algebras. To do this, one has to abandon the universe that has symmetries, for otherwise one does not have a sufficiently large supply of operads with monomial relations: vanishing of a monomial forces vanishing of all monomials obtained from it by a symmetric group action, see \cite[Prop.~5.2.2.5]{MR3642294}. The kind of monoids that have a good theory of Gr\"obner bases are \emph{shuffle operads}. A rigorous definition of a shuffle operad uses ordered species \cite{MR1629341}, which we shall now discuss in the linear context. 

An \emph{ordered linear species} is a contravariant functor from the groupoid of finite ordered sets (the category whose objects are finite totally ordered sets and whose morphisms are order preserving bijections) to the category of vector spaces. In terms of the intuition with multilinear maps, this more or less corresponds to choosing a basis of multilinear operations whose inputs are indexed by an ordered set $I$. An ordered linear species $\calS$ is said to be \emph{reduced} if $\calS(\varnothing)=0$. 

The \emph{shuffle Cauchy product} of two ordered linear species $\calS_1$ and $\calS_2$ is defined by the same formula as in the symmetric case:
 \[
(\calS_1\cdot_\Sha\calS_2)(I):=\bigoplus_{I=I_1\sqcup I_2}\calS_1(I_1)\otimes\calS_2(I_2).
 \]
One may consider monoids with respect to the Cauchy product; they are called \emph{shuffle algebras} \cite{MR3642294,MR2918719} or \emph{permutads} \cite{MR2995045}. Moreover, even in the absence of symmetric group actions, the extra datum of an order in our category allows one to define \emph{divided powers} of reduced ordered linear species by
 \[
\calS^{(n)}(I):=\bigoplus_{\substack{I=I_1\sqcup \cdots\sqcup I_n,\\ 
I_1,\ldots, I_n\ne\varnothing, \min(I_1)<\cdots<\min(I_n)}}\calS(I_1)\otimes\cdots\otimes \calS(I_n).
 \]
Using those, the \emph{shuffle composition product} of two reduced ordered linear species $\calS_1$ and $\calS_2$ is defined by the formula
 \[
\calS_1\circ_\Sha\calS_2:=\bigoplus_{n\ge 1}\calS_1(\{1,\ldots,n\})\otimes\calS_2^{(n)},
 \]
that is, if one unwraps the definitions, 
 \[
(\calS_1\circ_\Sha\calS_2)(I)=\bigoplus_{n\ge 1}\calS_1(\{1,\ldots,n\})\otimes\left(\bigoplus_{\substack{I=I_1\sqcup \cdots\sqcup I_n,\\ 
I_1,\ldots, I_n\ne\varnothing,\\ \min(I_1)<\cdots<\min(I_n)}}\calS_2(I_1)\otimes\cdots\otimes \calS_2(I_n)\right).
 \]
The linear species $\mathbbold{1}$ discussed above may be regarded as an ordered linear species; as such, it is the unit of the shuffle composition product.

Formally, a \emph{shuffle operad} is a monoid with respect to the shuffle composition product. As we shall see below, each symmetric operad gives rise to a shuffle operad, and that is the main reason to care about shuffle operads. However, we start with explaining how to develop a theory of Gr\"obner bases of ideals in free shuffle operads.

To describe free shuffle operads, we first define shuffle trees. Combinatorially, a \emph{shuffle tree} is a planar rooted tree whose leaves are indexed by a finite ordered set $I$ in such a way that the following ``local increasing condition'' is satisfied: for every vertex of the tree, the minimal leaves of trees grafted at that vertex increase from the left to the right. The free shuffle operad generated by an ordered linear species $\calX$ can be defined as follows. It is an ordered linear species $\calT_\Sha(\calX)$ for which $\calT_\Sha(\calX)(I)$ is spanned by decorated shuffle trees:  each internal vertex $v$ of a tree must be decorated by an element of $\calX(I_v)$, where $I_v$ is the set of incoming edges of $v$, ordered from the left to the right according to the planar structure. Such decorated trees should be thought of as tensors: they are linear in each vertex decoration. The operad structure is given by grafting of trees onto each other. One particular class of shuffle trees we shall consider are the so called \emph{left combs}: trees for which all vertices appear on the unique path from the root to the minimal leaf. For instance, among the shuffle trees
 \[
\lbincomb{}{}{1}{2}{3},\quad \lbincomb{}{}{1}{3}{2}, \quad\rbincomb{}{}{1}{2}{3},\quad \binfork{}{}{}{1}{2}{3}{4}
 \]
the first two are left combs, and the last two are not.

Given a basis of the vector space of an ordered linear species $\calX$, one may consider all shuffle trees whose vertices are decorated by those basis elements. Such shuffle trees with leaves in a bijection with the given ordered set $I$ form a basis of $\calT_\Sha(\calX)(I)$, and we shall think of them as monomials in the free shuffle operad. 

The next step in developing a theory of Gr\"obner bases is to define divisibility of monomials. Suppose that we have a shuffle tree $S$. We can  insert another shuffle tree $S'$ into an internal vertex of $S$, and connect its leaves to the children of that vertex so that the order of leaves agrees with the left-to-right order of the children. We say that the thus obtained shuffle tree is divisible by $S'$, and use this notion of divisibility to define divisibility of decorated shuffle trees, that is of monomials in the free operad. For example, if we work in the free operad generated by the ordered linear species $\calX$ such that $\calX(I)$ is nonzero only for $|I|=2$, and is spanned by one element $x$, we may insert the tree $U=\lbincomb{x}{x}{1}{3}{2}$ into the ternary vertices of the trees
 \[
\ltbcomb{\phantom{\sum\limits_{n=1}^\infty a_n}}{x}{1}{4}{2}{3} \qquad\text{and}\qquad \lbtcomb{x}{\phantom{\sum\limits_{n=1}^\infty a_n}}{1}{3}{4}{2} . 
 \]
We obtain 
 \[
\ltbcomb{\substack{\phantom{1}\\\phantom{1}U\phantom{1}\\\phantom{1}}}{x}{1}{4}{2}{3}=\llbincomb{x}{x}{x}{1}{4}{3}{2}=\lbtcomb{x}{\substack{\phantom{1}\\\phantom{1}U\phantom{1}\\\phantom{1}}}{1}{3}{4}{2}.
 \]

Once divisibility is understood, the usual Gr\"obner--Shirshov method of computing S-polynomials (in the language of Shirshov, one would say ``compositions'', which has the huge disadvantage in the case of operads where the same word is used to talk about the monoid structure), normal forms, etc. works in the usual way. The only other required ingredient is an \emph{admissible ordering of monomials}, that is a total ordering of shuffle trees with the given set of leaf labels which is compatible with the shuffle operad structure. Such orderings exist, and we invite the reader to consult \cite{MR3642294,MR4114993} for definitions and examples. For us the so called graded path-lexicographic ordering and reverse graded path-lexicographic ordering will be of particular importance. With respect to the former, the trees are first compared by the depth of their leaves, while with respect to the latter, one reverses the comparison with respect to the depth of the leaves (in both cases, leaves are considered one by one in their given order).
Throughout the paper, we say ``the (reverse) graded path-lexicographic ordering'' (with the definite article), though  such an ordering depends on some ordering of generators, which one may choose freely.

Let us note that from the operad point of view, it is possible to unravel the mystery behind the mock-Lie counterexample from Section \ref{sec:Burgin}. It turns out that the mock-Lie identity does not form a Gr\"obner basis of the corresponding operad; and in fact, for one of the possible orderings, the reduced Gr\"obner basis for that operad contains the element 
\begin{multline*}
((a_1a_2)a_3)a_4
+((a_1a_2)a_4)a_3
+((a_1a_3)a_2)a_4
\\ +((a_1a_3)a_4)a_2
+((a_1a_4)a_2)a_3
+((a_1a_4)a_3)a_2
\end{multline*}
which fails the combinatorial condition of the property (S). This suggests that perhaps one can repair Burgin's criterion by re-defining the word ``follow'' in ``does not follow from identities of smaller degrees'' using Gr\"obner bases for operads, and then replacing the property (S) by a combinatorial condition of similar flavour. One possible implementation of this plan leads to the combinatorial criterion proved in Section \ref{sec:combcrit} below.

\subsection{From symmetric operads to shuffle operads}

Note that there is a forgetful functor $\calS\mapsto \calS^f$ from all linear species to ordered linear species; it is defined by the formula $\calS^f(I):=\calS(I^f)$, where $I$ is a finite totally ordered set and $I^f$ is the same set but with the total order ignored. The reason to consider ordered linear species, shuffle algebras and shuffle operads is explained by the following proposition. 

\begin{proposition}[{\cite{MR3642294,MR2667136}}]
For any two linear species $\calS_1$ and $\calS_2$, we have ordered linear species isomorphisms
\begin{gather*}
(\calS_1\cdot\calS_2)^f\cong\calS_1^f\cdot_\Sha\calS_2^f,\\
(\calS_1\circ\calS_2)^f\cong\calS_1^f\circ_\Sha\calS_2^f.
\end{gather*}
In particular, applying the forgetful functor to a twisted associative algebra (a monoid for the Cauchy product) produces a shuffle algebra, and applying a forgetful functor to a reduced symmetric operad gives a shuffle operad. The forgetful functor sends modules over symmetric operads to modules over shuffle operads, ideals to ideals, free symmetric operads to free shuffle operads, etc. 
\end{proposition}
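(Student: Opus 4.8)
The plan is to treat the two isomorphisms separately and then derive the structural consequences formally. The first isomorphism is essentially a tautology: by definition both $(\calS_1\cdot\calS_2)^f$ and $\calS_1^f\cdot_\Sha\calS_2^f$ send a finite ordered set $I$ to the vector space $\bigoplus_{I=I_1\sqcup I_2}\calS_1(I_1)\otimes\calS_2(I_2)$, whose indexing set --- unordered decompositions of $I$ into two subsets --- makes no reference to the order on $I$. The identity map of this space is the required isomorphism, and its naturality with respect to order-preserving bijections is immediate, since such a map is in particular a bijection and the functoriality already built into $\calS_1$ and $\calS_2$ supplies the compatibility.

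For the second isomorphism I would argue as follows. Fix a finite ordered set $I$ and an integer $n\ge 1$, and let $D_n(I)$ be the set of ordered decompositions $I=I_1\sqcup\cdots\sqcup I_n$ into \emph{nonempty} subsets, on which $S_n$ acts by permuting blocks. Since the blocks are nonempty and pairwise disjoint, their minima $\min(I_1),\ldots,\min(I_n)$ are pairwise distinct, so no nontrivial permutation can fix a decomposition: the action of $S_n$ on $D_n(I)$ is free, and every orbit contains exactly one \emph{canonical} representative, the one with $\min(I_1)<\cdots<\min(I_n)$. In the symmetric composition product the summand indexed by $n$ is the space of coinvariants $\calS_1(\{1,\ldots,n\})\otimes_{\k S_n}\bigl(\bigoplus_{D_n(I)}\calS_2(I_1)\otimes\cdots\otimes\calS_2(I_n)\bigr)$ --- here one uses that $\calS_2$ is reduced, so only decompositions into nonempty blocks contribute. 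Decomposing the left $S_n$-module $\bigoplus_{D_n(I)}\calS_2(I_1)\otimes\cdots\otimes\calS_2(I_n)$ into orbits, each orbit is an induced module $\k S_n\otimes_{\k H}N$ with trivial stabiliser $H=\{e\}$ and with $N$ the tensor product at the canonical representative; the standard identification $M\otimes_{\k S_n}(\k S_n\otimes_{\k H}N)\cong M\otimes_{\k H}N$ then rewrites this coinvariant space as $\bigoplus\calS_1(\{1,\ldots,n\})\otimes\calS_2(I_1)\otimes\cdots\otimes\calS_2(I_n)$, summed over canonical decompositions. Summing over $n\ge 1$ produces exactly $(\calS_1^f\circ_\Sha\calS_2^f)(I)$, with naturality in order-preserving bijections again automatic.

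It then remains to deduce the ``in particular'' claims, which are formal. From the descriptions above --- both sides spanned by the same decorated trees, with the structure maps merely recording the canonical increasing ordering of blocks at each vertex --- one checks that these isomorphisms are compatible with the associativity and unit constraints, so that $(-)^f$ is a \emph{strong monoidal functor} from linear species with the Cauchy product to ordered linear species with the shuffle Cauchy product, and, on reduced species, from the composition product $(\circ,\mathbbold{1})$ to the shuffle composition product $(\circ_\Sha,\mathbbold{1})$. A strong monoidal functor carries monoids to monoids and modules over a monoid to modules over its image, so a twisted associative algebra becomes a shuffle algebra, a reduced symmetric operad becomes a shuffle operad, and their (bi)modules are carried along. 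Finally $(-)^f$ is computed pointwise, hence exact and cocontinuous; exactness sends sub(bi)modules --- in particular ideals --- to sub(bi)modules, while cocontinuity together with strong monoidality shows that $(-)^f$ commutes with the free-monoid and free-operad constructions. Concretely, $\calT(\calX)^f$ and $\calT_\Sha(\calX^f)$ are both spanned by $\calX$-decorated rooted trees with leaves labelled by $I$, and the isomorphism $\calT(\calX)^f\cong\calT_\Sha(\calX^f)$ sends such a tree to its unique shuffle representative, obtained by reordering the children at each vertex by increasing minimal leaf; thus free symmetric operads go to free shuffle operads.

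The one step that is not purely formal is the second, and even there the substance is the single observation that an ordered decomposition of a set into nonempty subsets is rigid, with trivial stabiliser and a unique increasing-minima representative. The hard part is not this observation itself but the verification that the resulting identification --- matching the $S_n$-coinvariant quotient on the symmetric side with the ``shuffle'' sum on the shuffle side --- respects grafting of decorated trees, since only this compatibility makes the strong-monoidal coherence, and with it the assertion about free operads, go through.
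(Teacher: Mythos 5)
Your argument is correct: the paper states this proposition with a citation and gives no proof of its own, and your proof is essentially the standard one from the cited sources --- the Cauchy-product case is a tautology, and the composition-product case rests on exactly the key point you isolate, namely that the $S_n$-action on ordered decompositions into nonempty blocks is free with the increasing-minima decomposition as canonical orbit representative, after which the strong-monoidality formalities deliver the statements about monoids, modules, ideals, and free objects. No gaps.
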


As an example, let us suppose that we consider multilinear operations that one may define starting from one binary operation $a_1,a_2\mapsto [a_1,a_2]$ which is skew-symmetric. Then we work with binary trees (each vertex is either a leaf or has two children), and each binary vertex is now decorated by our only structure operation. Applying the forgetful functor means rewriting each such tree in terms of monomials in the free shuffle operad, using the skew-symmetry of the operation. For instance, the ``Jacobiator'' (the element encoding the Jacobi identity)
 \[
[[a_1,a_2],a_3]+[[a_2,a_3],a_1]+[[a_3,a_1],a_2]
 \]
corresponds to the element
 \[
\lbincomb{[-,-]}{[-,-]}{1}{2}{3}+\lbincomb{[-,-]}{[-,-]}{2}{3}{1}+\lbincomb{[-,-]}{[-,-]}{3}{1}{2}
 \]
in the free symmetric operad, and then to the element
 \[
J:=\lbincomb{[-,-]}{[-,-]}{1}{2}{3}-\rbincomb{[-,-]}{[-,-]}{1}{2}{3}-\lbincomb{[-,-]}{[-,-]}{1}{3}{2}
 \]
in the free shuffle operad. (For an operation $f(a_1,a_2)$ without symmetries, one has to introduce another operation $f^\circ(a_1,a_2):=f(a_2,a_1)$ to perform such rewriting: in the world of shuffle operads, we work with $\k$-modules, not $\k S_n$-modules.) There exists an admissible ordering for which the last of the three monomials in $J$ is the largest one, and in order to compute the Gr\"obner basis, we should form the S-polynomial corresponding to the self-overlap $T$ of that leading monomial with itself, which is  
 \[
\ltbcomb{J}{[-,-]}{1}{4}{2}{3}-\lbtcomb{[-,-]}{J}{1}{3}{4}{2}.
 \]
(If we denote $[-,-]$ by $x$, we note that we saw these monomials when discussing divisibility.) The leading terms in an S-polynomial always cancel, and one needs to check if it has a non-zero reduced form with respect to the existing elements: if it does, that reduced form needs to be adjoined to the existing elements in the course of computing the Gr\"obner basis. In our particular case, the S-polynomial gets reduced to zero, and so the Jacobiator forms a Gr\"obner basis. 

To conclude this section, the forgetful functor from symmetric operads to shuffle operads allows one to go from the universe of ``interesting'' objects (actual varieties of algebras) to the universe of ``manageable'' objects (shuffle operads). Besides the symmetric group actions, it does not really lose any information, and, in particular, if certain properties can be expressed by saying that certain vector spaces are equal to zero, one can prove that in the context of shuffle operads (zero is zero with or without the symmetric group actions). For instance, all results on freeness (of operads or modules over operads) are expressed in terms of vanishing of certain homology groups, as we shall see in the next section. 

\subsection{Bar constructions and freeness of operadic modules}

We shall now recall an important technical tool, a homological criterion of freeness of operadic modules. We refer the reader to \cite{MR2494775} for details on operadic modules and their homotopy theory.

Recall that for an operad $\calO$, its left module $\calL$, and its right module $\calR$, there is a two-sided bar construction $\mathsf{B}_\bullet(\calR,\calO,\calL)$. It is a species of chain complexes. The underlying linear species is spanned by rooted trees where for each tree the root vertex is decorated by an element of $\calR$, the internal vertices whose all children are leaves are decorated by elements of $\calL$, and other internal vertices are decorated by elements of $\calO$. The structure of a chain complex is given by the differential that contracts edges of the tree and uses the operadic composition and the module action maps. For an operad with unit, $\mathsf{B}_\bullet(\calO,\calO,\calO)$ is acyclic (has zero homology in positive degrees and one-dimensional homology in degree zero); moreover, for an augmented operad $\calO$ with the augmentation ideal $\calO_+$, the two-sided bar construction $\mathsf{B}_\bullet(\calO,\calO_+,\calO)$ is acylic. This leads to a free resolution of any left $\calO$-module $\calL$ as 
$$\mathsf{B}_\bullet(\calO,\calO_+,\calO)\circ_\calO\calL\cong\mathsf{B}_\bullet(\calO,\calO_+,\calL).$$
This resolution can be used to prove the following result. (A similar result for right modules is slightly simpler, it was proved and used in \cite{MR4300233}.)

\begin{proposition}\label{prop:Homol}
Let $\calO$ be a (reduced connected) operad, and let $\calL$ be a reduced left $\calO$-module. Then $\calL$ is free as a left $\calO$-module if and only if the positive degree homology of the bar construction $\mathsf{B}_\bullet(\mathbbold{1},\calO_+,\calL)$ vanishes; in the latter case, $\calR$ is generated by the degree zero homology of $\mathsf{B}_\bullet(\mathbbold{1},\calO_+,\calL)$. 
\end{proposition}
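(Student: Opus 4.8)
The plan is to reinterpret the bar construction homologically and then finish with a Nakayama-type argument. As recalled just above, $\mathsf{B}_\bullet(\calO,\calO_+,\calL)$ is a free resolution of $\calL$ as a left $\calO$-module; applying the functor $\mathbbold{1}\circ_\calO(-)$ term by term yields precisely $\mathsf{B}_\bullet(\mathbbold{1},\calO_+,\calL)$ (the only point to verify is that the topmost face map, which would contract the root into its neighbour, becomes zero because $\calO_+$ acts by zero on $\mathbbold{1}$). Hence $H_\bullet\big(\mathsf{B}_\bullet(\mathbbold{1},\calO_+,\calL)\big)$ computes the operadic derived functors $\operatorname{Tor}^\calO_\bullet(\mathbbold{1},\calL)$, and in degree zero it equals the linear species of generators $\calV:=\mathbbold{1}\circ_\calO\calL$, with its canonical surjection $\pi\colon\calL\twoheadrightarrow\calV$. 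The forward implication is then immediate: if $\calL\cong\calO\circ\calV$ is free, it admits the length-zero free resolution given by itself, so $\operatorname{Tor}^\calO_{>0}(\mathbbold{1},\calL)=0$, which is exactly the vanishing of the positive-degree homology of $\mathsf{B}_\bullet(\mathbbold{1},\calO_+,\calL)$, while $\operatorname{Tor}^\calO_0(\mathbbold{1},\calL)=\mathbbold{1}\circ_\calO(\calO\circ\calV)=\calV$ recovers the generators.

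For the converse, suppose $H_{>0}\big(\mathsf{B}_\bullet(\mathbbold{1},\calO_+,\calL)\big)=0$. Since we work over a field, I would choose a section $s\colon\calV\to\calL$ of $\pi$ in the category of linear species and let $\phi\colon\calO\circ\calV\to\calL$ be the unique left $\calO$-module map extending $s$; it suffices to show $\phi$ is an isomorphism. For surjectivity, put $Q=\operatorname{coker}\phi$; as $\mathbbold{1}\circ_\calO(-)$ is right exact and $\pi\circ s=\operatorname{id}_\calV$, the induced map $\mathbbold{1}\circ_\calO\phi\colon\mathbbold{1}\circ_\calO(\calO\circ\calV)=\calV\to\mathbbold{1}\circ_\calO\calL=\calV$ is the identity, so $\mathbbold{1}\circ_\calO Q=0$, that is $Q=\calO_+\cdot Q$. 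But a reduced left module over a reduced connected operad with $Q=\calO_+\cdot Q$ vanishes: one inducts on arity, using that $\calO$ connected forces $\calO_+$ to be concentrated in arities $\ge 2$ and that $Q$ reduced makes the arity-$n$ part of $\calO_+\cdot Q$ depend only on $Q$ in arities $<n$, with base case $Q(1)=0$. Hence $Q=0$ and $\phi$ is onto. For injectivity, set $K=\ker\phi$; the short exact sequence $0\to K\to\calO\circ\calV\xrightarrow{\phi}\calL\to0$ of left $\calO$-modules produces a long exact sequence in $\operatorname{Tor}^\calO(\mathbbold{1},-)$ whose relevant segment is $\operatorname{Tor}^\calO_1(\mathbbold{1},\calL)\to\mathbbold{1}\circ_\calO K\to\mathbbold{1}\circ_\calO(\calO\circ\calV)\xrightarrow{\ \cong\ }\mathbbold{1}\circ_\calO\calL$; the first term vanishes by hypothesis and the last arrow is (the identity of) $\calV$, so $\mathbbold{1}\circ_\calO K=0$, and the same Nakayama-type argument forces $K=0$. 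Therefore $\phi$ is an isomorphism, and $\calL\cong\calO\circ\calV$ is free with $\calV=H_0\big(\mathsf{B}_\bullet(\mathbbold{1},\calO_+,\calL)\big)$, as claimed.

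The hard part is foundational rather than conceptual: one must make sure that relative homological algebra over a reduced connected operad behaves as expected in the category of linear species — that $\mathsf{B}_\bullet(\calO,\calO_+,-)$ is functorial and yields a free resolution, that $\mathbbold{1}\circ_\calO(-)$ has a well-behaved left derived functor computed by the bar construction, and that short exact sequences of left $\calO$-modules induce the long exact $\operatorname{Tor}$-sequence used above. All of this is standard (see the references on operadic modules cited above), but the composition product is slightly more rigid than the ordinary tensor product, so these points deserve to be spelled out; if one prefers to avoid derived functors, each step can instead be carried out by explicit manipulations of bar complexes filtered by arity. In either presentation, the genuinely load-bearing elementary ingredient is the Nakayama-type implication $Q=\calO_+\cdot Q\Rightarrow Q=0$, which is precisely where reducedness of $\calL$ and connectedness of $\calO$ enter.
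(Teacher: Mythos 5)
Your overall strategy is sound and genuinely different from the paper's: the paper deduces the statement from the existence and uniqueness of the \emph{minimal} free resolution of $\calL$ (in the style of Eilenberg's argument for modules over rings), whereas you build the candidate free module $\calO\circ\calV$ on the indecomposables directly and check that it maps isomorphically onto $\calL$. The surjectivity half is completely correct: $\mathbbold{1}\circ_\calO(-)$ kills the cokernel, and your Nakayama-type induction on arity — using that $\calO_+$ is concentrated in arities $\ge 2$ and that reducedness forces all inputs to have strictly smaller arity — is exactly the right elementary input and is where the hypotheses of the proposition actually enter.

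The one step that does not work as literally written is the long exact sequence in $\operatorname{Tor}^{\calO}(\mathbbold{1},-)$ attached to $0\to K\to\calO\circ\calV\to\calL\to0$. The category of left $\calO$-modules is not abelian (the paper stresses this): the functors $\calL\mapsto\calO_+^{\circ k}\circ\calL$ computing the bar construction are polynomial, not linear, in $\calL$, so a short exact sequence of left modules does not induce a short exact sequence of bar complexes, and no connecting homomorphism is available for free. The step can be repaired, but by a different mechanism: $K$ is an \emph{ideal} in $\calO\circ\calV$ (any composition with at least one input in $K$ lands in $K$, by multilinearity of the action and the fact that $K$ is the kernel of a module map), so after choosing a species-level splitting $\calO\circ\calV\cong K\oplus\calL$ over the field, the span of bar-construction trees with at least one leaf carrying a $K$-component is a subcomplex whose quotient is $\mathsf{B}_\bullet(\mathbbold{1},\calO_+,\calL)$. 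The resulting long exact sequence, together with acyclicity of the middle term, shows that $H_1\bigl(\mathsf{B}_\bullet(\mathbbold{1},\calO_+,\calL)\bigr)=0$ forces $K$ to coincide with the image of the compositions having at least one input in $K$. Note that the degree-zero homology of this subcomplex is \emph{not} $\mathbbold{1}\circ_\calO K$, as your exact sequence asserts, but the quotient of $K$ by this larger ``ideal-type'' action; fortunately the resulting weaker identity still feeds into the same induction on arity and yields $K=0$. With that substitution your proof closes up; as written, the appeal to a general operadic Tor long exact sequence is the gap.
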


\begin{proof}
This immediately follows from the existence and uniqueness up to isomorphism of the minimal free $\calO$-module resolution of~$\calL$, which, for connected operads over a field of characteristic zero, is done similarly to the case of modules over rings in \cite{MR82489}, even though the category of left $\calO$-modules is not abelian.  
\end{proof}

In particular, if $\calL^f$ is free as a left $\calO^f$-module, this means that the vanishing condition of this criterion holds, and we may use that very condition in the universe of symmetric operads to conclude that $\calL$ is free as a left $\calO$-module. This idea, first indicated in \cite{MR3203367}, is one of the key features of our approach.  

\subsection{Universal multiplicative enveloping algebras}\label{sec:deriv}

For any variety of algebras $\mathfrak{M}$ and any $\mathfrak{M}$-algebra $A$, one can define the universal multiplicative enveloping algebra $U_\mathfrak{M}(A)$ to consist of all actions of elements of $A$: formally, one keeps one dedicated slot of an operation open, and inserts elements of $A$ in all other slots. This object has a natural associative algebra structure; moreover, the category of left modules over that algebra is equivalent to the category of $\mathfrak{M}$-bimodules over the algebra $A$. A classical exposition is given in \cite[Sec.~II.7]{MR0251099} and in \cite[\S 3.3]{MR668355}, and a presentation in the language of operads is available in \cite[Sec.~1.6]{MR1301191}. In fact, it is possible to view the universal multiplicative enveloping algebra as the arity one part of the \emph{enveloping operad}, see \cite{MR2527612} and \cite[Chapter 4]{MR2494775}. 

We shall use the viewpoint on universal multiplicative enveloping algebras and on enveloping operads that encodes them via particular right operadic modules, mainly following \cite[Chapter 10]{MR2494775}, but slightly re-casting it in the language of linear species. 

Let us recall that the \emph{derivative} $\partial(\calS)$ of a species $\calS$ is defined by the formula
 \[
\partial(\calS)(I):=\calS(I\sqcup\{\star\}),
 \]
so that in the case of linear species of multilinear operations of some type, it forms multilinear operations with one extra dedicated input that does not mix with the others (this is denoted by $\calS[1]$ in \cite{MR2494775}, clashing with one of the usual notations for suspensions; to avoid any confusion, we choose to stick with the more standard notation of \cite{MR1629341}). In the view of the above discussion of multiplicative universal envelopes, this is very appropriate. If $\calO$ is an operad, then $\partial(\calO)$ has two structures: it is a right $\calO$-module (via substitutions into the non-dedicated input) and a \emph{twisted associative algebra}, that is a left module over the associative operad (via concatenating operations, substituting them into the dedicated inputs of one another). These two structures commute: $\partial(\calO)$ is a twisted associative algebra in the symmetric monoidal category of right $\calO$-modules. The universal enveloping algebra is obtained from this via a relative composite product construction \cite{MR2494775,MR1854112}:
 \[
U_{\calO}(A)\cong \partial(\calO)\circ_{\calO} A.
 \]  
This has been used in a crucial way in \cite{MR4381941} to establish the following result: there is a PBW type theorem for universal multiplicative enveloping algebras over the given operad $\calO$ if and only if $\partial(\calO)$ is free as a right $\calO$-module. Moreover, in this case, for the linear species $\calY$ that freely generates that right module we have an isomorphism 
 \[
U_{\calO}(A)\cong\calY(A) 
 \]
that is functorial with respect to $\calO$-algebra morphisms.

Similarly to the passage from symmetric operads to shuffle operads, it is possible to pass from left modules over the associative operad to a certain shuffle version. We shall not discuss this passage in detail, but rather briefly explain what this means for $\partial(\calO)$. In the symmetric context, the operation $\partial$ makes one of the inputs of the operation ``special''. Once we apply the forgetful functor to ordered linear species, the inputs are linearly ordered. A good ``canonical'' way to make one of them special is take the first one. We shall recall the corresponding construction, referring the reader to \cite{MR4381941} for a slightly different viewpoint. For a shuffle operad $\calP$, we let $\partial\Sha(\calP)(I)=\calP(\{-\infty_I\}\sqcup I)$, where $-\infty_I$ denotes a new element that is smaller than all elements of $I$. For each ordered set $K$ partitioned as $K=I\sqcup J$, the product
 \[
\mu_{I,J}\colon\partial^\Sha(\calP)(I)\otimes\partial^\Sha(\calP)(J)\to\partial^\Sha(\calP)(K),
 \]
is defined as follows. Suppose that 
 \[
\alpha\otimes\beta\in \partial^\Sha(\calP)(I)\otimes\partial^\Sha(\calP)(J)=\calP(\{-\infty_I\}\sqcup I)\otimes\calP(\{-\infty_J\}\sqcup J).
 \]
We set 
 \[
\mu_{I,J}(\alpha\otimes\beta)=\alpha\circ_{-\infty_I}\beta.
 \]
(Strictly speaking, this way one obtains an element in $\calP(\{-\infty_J\}\sqcup K)$, and one has to use the unique bijection of ordered sets to land in $\calP(\{-\infty_K\}\sqcup K)$.) We also obtain a designated element in $\partial^\Sha(\calP)(\varnothing)=\calP(\{-\infty_\varnothing\})$ corresponding to the operadic unit.

\begin{proposition}\label{prop:twoder}
For each shuffle operad $\calP$, the thus defined map 
 \[
\partial^\Sha(\calP)\cdot_\Sha\partial^\Sha(\calP)\to \partial^\Sha(\calP)
 \]
satisfies the associativity axiom and the unitality axiom with respect to the designated elements, so $\partial^\Sha(\calP)$ becomes a shuffle algebra. Moreover, if the shuffle operad $\calP$ is of the form $\calP=\calO^f$, where $\calO$ is a symmetric operad, then the shuffle algebra $\partial^\Sha(\calP)$ is isomorphic to $\partial(\calO)^f$.
\end{proposition}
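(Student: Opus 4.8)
The plan is to reduce the first assertion to the associativity and unitality of the shuffle composition on $\calP$, and the second to the naturality of operadic composition together with the already-recorded fact that the forgetful functor turns twisted associative algebras into shuffle algebras.

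For the shuffle-algebra structure, it helps to say precisely what $\mu_{I,J}$ is in terms of the monoid map $\gamma_\calP\colon\calP\circ_\Sha\calP\to\calP$: given $K=I\sqcup J$ and $\alpha\in\calP(\{-\infty_I\}\sqcup I)$, $\beta\in\calP(\{-\infty_J\}\sqcup J)$, reindex $\{-\infty_I\}\sqcup I$ order-isomorphically with $\{1,\dots,n\}$ (with $n=|I|+1$, so $-\infty_I\mapsto 1$) and apply $\gamma_\calP$ to $\alpha$, to $\beta$ in the first slot, and to the appropriate operadic units in the others, using the decomposition of $\{-\infty_J\}\sqcup K$ into the block $\{-\infty_J\}\sqcup J$ and the singletons $\{i\}$, $i\in I$ (the minimum condition being automatic here); the result lies in $\calP(\{-\infty_J\}\sqcup K)$ and is transported to $\partial^\Sha(\calP)(K)=\calP(\{-\infty_K\}\sqcup K)$ along the unique order isomorphism, which fixes $K$ and sends $-\infty_J$ to $-\infty_K$. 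With this in hand, the associativity square for $\mu$ becomes, after the relabellings are peeled off, an instance of the associativity of $\gamma_\calP$ applied to $\alpha,\beta,\gamma$ and a nested family of units, and the unit triangles become the two unit axioms for $\gamma_\calP$ at the slot $-\infty$; that the iterated relabellings on the two sides of the associativity square coincide is automatic, since a composite of unique order isomorphisms of finite totally ordered sets is again the unique one. The whole argument is formally the same as the classical verification that $\partial(\calO)$ is a twisted associative algebra in right $\calO$-modules \cite{MR2494775}; placing the distinguished input at $-\infty$ (rather than elsewhere) is exactly what makes $\{-\infty_J\}\sqcup J$ a sub-ordered-set of $\{-\infty_J\}\sqcup K$ containing its minimum, so that the shuffle composition above is legitimate.

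For the ``moreover'' part, note that $\partial^\Sha(\calO^f)(I)=\calO^f(\{-\infty_I\}\sqcup I)=\calO(\{-\infty_I\}\sqcup I)$, the order being forgotten, while $\partial(\calO)^f(I)=\partial(\calO)(I^f)=\calO(I\sqcup\{\star\})$, so the bijection of finite sets fixing $I$ and sending $-\infty_I$ to $\star$ induces an isomorphism $\phi_I\colon\partial^\Sha(\calO^f)(I)\xrightarrow{\sim}\partial(\calO)^f(I)$, natural in order-preserving bijections of $I$. It remains to see that $\phi$ intertwines the multiplications. The multiplication on $\partial(\calO)^f$ is $(-)^f$ of that of the twisted associative algebra $\partial(\calO)$, which by the proposition on the forgetful functor is its shuffle-algebra structure, and on the $(I,J)$-component it sends $\alpha'\otimes\beta'$ to $\alpha'\circ_\star\beta'\in\calO(K\sqcup\{\star\})$, the new distinguished input being the $\star$ of $\beta'$ and no relabelling occurring. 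Transporting along $\phi$ and using the naturality of operadic composition in the input labels, this is exactly the recipe for $\mu_{I,J}$: insert $\beta$ at the input $-\infty_I\leftrightarrow\star$, keep the $-\infty_J\leftrightarrow\star$ of $\beta$ distinguished, relabel it to $-\infty_K$. Since $\calO^f$ forgets orders, the order-bookkeeping of the previous paragraph is invisible here, the two composites are literally the same morphism of symmetric species, and $\phi$ is an isomorphism of shuffle algebras.

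The step I expect to be most delicate is the bookkeeping in the first paragraph: one must pin down, once and for all, the ordered set in which the shuffle composition underlying $\mu_{I,J}$ lands, and verify that ``insert-then-relabel'' is associative and unital on the nose. None of this is deep, but it is easy to state imprecisely; I would make it airtight either by first carrying everything out in the free shuffle operad, where $\mu_{I,J}$ is a concrete grafting of shuffle trees at the minimal leaf, and then invoking the universal property, or by writing the whole computation as a chain of instances of the associativity constraint for $\circ_\Sha$.
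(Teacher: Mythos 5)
Your proposal is correct and follows essentially the same route as the paper, whose own proof simply records that associativity and unitality follow at once from the shuffle-operad axioms (composition occurring only at the minimum, so only the sequential axiom is used) and that the identification with $\partial(\calO)^f$ is tautological by direct inspection. Your write-up merely makes explicit the reindexing bookkeeping that the paper leaves implicit; no new idea or different decomposition is involved.
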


\begin{proof}
Both the associativity and the unitality immediately follow from the corresponding axioms for shuffle operads. (Note that since we compose only at minima, only the sequential axiom of the operad, corresponding to the genuine associativity, will be used.) The statement about the forgetful functor is essentially tautological and holds by direct inspection.
\end{proof}

\subsection{Modules of K\"ahler differentials for algebras over operads}\label{sec:kahler}

Generalizing the classical definition for commutative algebras, one may define the $U_{\calO}(A)$-module of K\"ahler differentials $\Omega^1_A$ for any given operad $\calO$ and any given $\calO$-algebra $A$ \cite[Sec.~4.4]{MR2494775}. 
(We use the notation $\Omega^1_A$ instead of $\Omega^1_{\calO}(A)$ used in \cite{MR2494775} since the latter looks like the evaluation of some linear species $\Omega^1_\calO$ on the underlying vector space of~$A$; in reality, as we shall see below, it is obtained via the \emph{relative} composite product.) The intrinsic definition states that $\Omega^1_A$ is the $U_{\calO}(A)$-module that represents the functor of derivations $\mathrm{Der}(A,E)$ with values in a $U_{\calO}(A)$-module $E$. It is known that this module can be constructed in a very explicit way: it is spanned by formal expressions $p(a_1,\ldots,da_i,\ldots,a_m)$, where $p\in\calO(m)$, and $a_1, \ldots, a_m\in A$, modulo the relations
\begin{multline*}
p(a_1,\ldots,q(a_i,\ldots,a_{i+n-1}),\ldots, da_j,\ldots, a_{m+n-1})=\\
(p\circ_i q)(a_1,\ldots,a_i,\ldots,a_{i+n-1},\ldots, da_j,\ldots, a_{m+n-1}),
\end{multline*}
\begin{multline*}
p(a_1,\ldots,dq(a_i,\ldots,a_{i+n-1}),\ldots, a_{m+n-1})=\\
\sum_{j=i}^{i+n-1}(p\circ_i q)(a_1,\ldots,a_i,\ldots,da_j,\ldots,a_{i+n-1},\ldots, a_{m+n-1}).
\end{multline*}
In these formulas, we rewrite the elements $p(a_1,\ldots,da_i,\ldots,a_m)$ for which one of $a_1$, \ldots, $a_m$ is computed using the $\calO$-algebra structure. However, according to \cite[Sec.~10.3]{MR2494775}, one may also view $q$ in these formulas as the right $\calO$-module action on $p(a_1,\ldots,da_i,\ldots,a_m)$, and observe that, similarly to the universal multiplicative enveloping algebras, there exists a universal right $\calO$-module of K\"ahler differentials $\Omega^1_\calO$ such that
 \[
\Omega^1_A\cong \Omega^1_\calO\circ_{\calO} A.
 \]
This has been used in \cite{DFC} to establish the following result: there is a PBW type theorem for modules of K\"ahler differentials of algebras over the given operad $\calO$ if and only if $\Omega^1_\calO$ is free as a right $\calO$-module. Moreover, in this case, for the linear species $\calZ$ that freely generates that right module we have an isomorphism 
 \[
\Omega^1_A\cong\calZ(A) 
 \]
that is functorial with respect to $\calO$-algebra morphisms. We note that the PBW property for the modules of K\"ahler differentials is discussed in \cite{MR2775896} where it is erroneously claimed that it follows from the PBW property for universal multiplicative enveloping algebras; this fails in many cases, starting from the operad of commutative associative algebras.

\section{Homological criterion of freeness}\label{sec:homol}

Our strategy for establishing freeness of subalgebras is to use homology of algebras over operads. In general, for a given operad, one can define \cite{MR2494775,MR2775896} the Andr\'e--Quillen homology theory for algebras over that operad using the appropriate model category structure. Specifically, for every $\calO$-algebra $A$, one may give the following definitions \cite[Sec.~13.1]{MR2494775}. The homology of $A$ with coefficients in a right $U_{\calO}(A)$-module $F$ is defined as  
 \[
H^{\calO}_\bullet(A,F)\cong H_\bullet(F\otimes_{U_{\calO}(Q_A)}\Omega^1_{Q_A}). 
 \]   
where $Q_A$ is any cofibrant replacement of $A$ in the category of $\calO$-algebras (a reader not fluent in the language of model categories can replace these words by ``free resolution in the category of differential graded $\calO$-algebras'' to have a clear picture of what is going on), and where $F$ is given a structure of a right $Q_A$-module via the augmentation $Q_A\to A$. We shall use homology of algebras over operads to give a criterion of freeness for an algebra $A$ over a given operad $\calO$. 

\begin{proposition}\label{prop:free-alg-homol}
Let $\calO$ be a (reduced connected) operad, and let $A$ be an $\calO$-algebra that possesses an additional positive grading for which all operations of $\calO$ are homogeneous; in other words, $A=\bigoplus_{k>0}A_k$, and $\alpha(A_{i_1},\ldots,A_{i_p})\subset A_{i_1+\cdots+i_p}$ for every structure operation $\alpha$. The $\calO$-algebra $A$ is free if and only if 
for each right $U_{\calO}(A)$-module $F$ we have $H^{\calO}_k(A,F)=0$ for $k>0$.
\end{proposition}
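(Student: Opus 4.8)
The plan is to package this as a standard consequence of the machinery just recalled, recognizing that "free" should correspond to "all higher Andr\'e--Quillen homology vanishes" exactly as in the commutative case. First I would treat the direction that is essentially formal: if $A$ is free, say $A = \calO(V)$ for a graded vector space $V$, then $A$ is already cofibrant, so we may take $Q_A = A$ in the definition of $H^{\calO}_\bullet(A,F)$. For a free $\calO$-algebra one has $\Omega^1_A \cong U_{\calO}(A)\otimes V$ as a left $U_{\calO}(A)$-module (the module of K\"ahler differentials of a free algebra is free on the generators; this follows from the universal property of $\Omega^1_A$ as the representing object for derivations, since a derivation out of a free algebra is determined freely by its values on generators). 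Hence $F\otimes_{U_{\calO}(A)}\Omega^1_A \cong F\otimes V$ is concentrated in homological degree $0$, and $H^{\calO}_k(A,F)=0$ for $k>0$. Here the positive grading on $A$ plays no role beyond ensuring everything is well-behaved; it is in the converse that it matters.

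For the converse, suppose $H^{\calO}_k(A,F)=0$ for all $k>0$ and all right $U_{\calO}(A)$-modules $F$. The idea is to build a free $\calO$-algebra mapping to $A$ that induces an isomorphism, using the grading to run an inductive/minimality argument. Let $V := A/(\calO_{\geq 2}\text{-decomposables})$, i.e.\ the space of indecomposables, which is graded by the positive grading on $A$; choosing a graded section $V \to A$ gives a map $\phi\colon \calO(V)\to A$ of $\calO$-algebras which is surjective (by a standard induction on the grading, since the indecomposables generate). It remains to show $\phi$ is injective. For this I would use the homology long exact sequence, or rather the transitivity (Jacobi--Zariski) sequence for the composite $\calO(V)\to A$: the relative cotangent complex / relative K\"ahler differentials of $A$ over $\calO(V)$ controls the kernel, and the hypothesis $H^{\calO}_{\geq 1}(A,-)=0$ combined with $H^{\calO}_{\geq 1}(\calO(V),-)=0$ (the direction just proved) forces the relative term to vanish in positive degrees; since $V$ was chosen to be exactly the indecomposables, the relative differentials also vanish in degree $0$. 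Freeness of $\Omega^1$ over $U_{\calO}(A)$ together with vanishing of all of $H^\calO_{\geq 0}$ relative to $\calO(V)$ then upgrades, via Proposition~\ref{prop:free-alg-homol}'s own formula applied to the surjection and its kernel, to $\ker\phi = 0$; concretely, one shows that if $\ker\phi\neq 0$ then picking a lowest-degree nonzero element produces a nontrivial class in some $H^{\calO}_1(A,F)$, contradicting the hypothesis.

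The main obstacle is the converse, and specifically making the "lowest-degree element of the kernel yields a nonzero homology class" step precise without an abelian category of $\calO$-algebras: one cannot simply invoke derived functors of a quotient. The clean way around this is to argue with a cofibrant (e.g.\ cellular, or minimal) replacement $Q_A$ of $A$ as a $dg$ $\calO$-algebra adapted to the positive grading, and to recognize $H^{\calO}_k(A,F) = H_k\bigl(F\otimes_{U_{\calO}(Q_A)}\Omega^1_{Q_A}\bigr)$ as the homology of the minimal model. Vanishing for all $F$ forces the generators of the minimal model in homological degrees $\geq 1$ to be absent (this is where characteristic zero and the grading give us a genuine minimal model, by the analogue of \cite{MR82489} already invoked in Proposition~\ref{prop:Homol}), so $Q_A$ has no cells in positive homological degree, i.e.\ $Q_A = \calO(W)$ for a graded vector space $W$ with zero differential; but $Q_A\xrightarrow{\sim} A$ is then an isomorphism of $\calO$-algebras, so $A\cong \calO(W)$ is free. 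This reduces the whole statement to the existence and uniqueness of minimal models for graded $\calO$-algebras over a field of characteristic zero, which is exactly the tool Proposition~\ref{prop:Homol} already relies on; so I expect the proof to be short, essentially a one-paragraph reduction to that fact.
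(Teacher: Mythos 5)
Your proposal is correct and, in its final form, is essentially the paper's own proof: the forward direction uses that a free algebra is its own cofibrant replacement with $\Omega^1$ free on the generators, and the converse passes to the minimal model $Q_A=\calO(X_\bullet)$ (whose existence rests on the positive grading, connectedness, and characteristic zero), identifies $H^{\calO}_\bullet(A,\k)$ with $X_\bullet$ via the vanishing of the induced differential on $\k\otimes X_\bullet$, and concludes that $X_{>0}=0$ forces $A\cong\calO(X_0)$. The Jacobi--Zariski/indecomposables detour you sketch first is not needed, as you yourself recognize before settling on the minimal-model argument.
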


\begin{proof}
We first note that, since a free algebra is its own cofibrant replacement, freeness of an algebra trivially implies that the positive degree homology and cohomology with any coefficients vanishes. 

To prove that homology vanishing ensures freeness, we are going to make several general observations. First, the assumption on positive grading of the algebra $A$ and on connectedness of the operad $\calO$ implies, by an easy induction, that the algebra $A$ possesses a \emph{minimal model}, that is a cofibrant replacement $Q_A$ for which the underlying algebra of $Q_A$ is a free algebra generated by a homologically graded vector space $X_\bullet$ that has an additional internal grading corresponding to that of $A$, and the differential sends every generator $x\in X_\bullet$ to a combination of decomposable elements (those obtained by applying nontrivial operations from $\calO$ to generators). Second, we note that for a free algebra $\calO(X_\bullet)$, we have a vector space isomorphism
 \[
\Omega^1_{\calO(X_\bullet)}\cong U_{\calO}(\calO(X_\bullet))\otimes X_\bullet. 
 \]  
Third, since the operad $\calO$ is connected, it has an augmentation $\calO\to\k$ leading to an augmentation for the universal enveloping algebra $U_{\calO}(A)$; hence, it makes sense to talk about the trivial right $U_{\calO}(A)$-module $\k$, which will be the protagonist of our argument. 

Suppose that for each right $U_{\calO}(A)$-module $F$ we have $H^{\calO}_k(A,F)=0$ for $k>0$. From the observations we made, it follows that for any cofibrant replacement $Q_A$ whose underlying algebra is a free algebra $\calO(X_\bullet)$, we have a vector space isomorphism
 \[
F\otimes_{U_{\calO}(Q_A)}\Omega^1_{Q_A}\cong F \otimes_{U_{\calO}(\calO(X_\bullet))} (U_{\calO}(\calO(X_\bullet))\otimes X_\bullet)\cong F \otimes X_\bullet.
 \]
If $Q_A$ is the minimal model and $F$ is the trivial right module, the differential of this chain complex is immediately seen to vanish. Consequently, we have  
 \[
H^{\calO}_\bullet(A,\k)\cong X_\bullet.
 \]
It follows from our assumption that $X_k$ vanishes for $k>0$, so the minimal model is just $\calO(X_0)$; this vector space is concentrated in homological degree zero, so there is no room for the differential, and $A\cong \calO(X_0)$ is a free $\calO$-algebra. 
\end{proof}

\begin{remark}\leavevmode
\begin{enumerate}
\item In the classical examples of associative algebras and of Lie algebras, the freeness criterion requires that the homology vanishes in degree greater than \emph{one}. This mismatch is explained by noting that in both cases the homology can be computed as the Tor functor over the universal enveloping algebra between the trivial right module and the left module of K\"ahler differentials; however, the classical definition of the Chevalley--Eilenberg and of Hochschild homologies realises them as the Tor functor between the trivial module and the quotient of a free module by its submodule isomorphic to the module of K\"ahler differentials, and the long exact sequence of homology explains the shift. 
\item We insist on stating our homological criterion of freeness for \emph{positively graded} algebras, since this is the best thing to hope within the reach of current knowledge: even for Lie algebras freeness of algebras of homological dimension one is an open question in characteristic zero and is false in positive characteristic, as shown by Mikhalev, Zolotykh and the second author \cite{MR1443453}.
\item Proposition \ref{prop:Homol} can be shown to be a particular case of Proposition \ref{prop:free-alg-homol}; however, to make the exposition more user-friendly for a reader without much experience of operads, we chose to present the two results separately. 
\end{enumerate}
\end{remark}

\section{A combinatorial criterion for the Nielsen--Schreier property}\label{sec:combcrit}

In this section, we prove the main theoretical result of this paper: a combinatorial criterion for the Nielsen--Schreier property in terms of Gr\"obner bases for operads. One part of it, Lemma \ref{lm:freepartial} is of independent interest, for it gives a criterion for the universal multiplicative enveloping algebra to be free as associative algebra, which is likely to be sufficient for a variety to have the Nielsen--Schreier property (see Conjecture \ref{conj:SchrComb} below). 

\begin{theorem}\label{th:SchrComb}
Suppose that the operad $\calO$ encoding the given variety of algebras $\mathfrak{M}$ satisfies the following two properties:
\begin{itemize}
\item[(M1)] for the reverse graded path-lexicographic ordering, each leading term of the reduced Gr\"obner basis of the corresponding shuffle operad $\calO^f$ has the minimal leaf directly connected to the root,
\item[(M2)] there exists an ordering for which each leading term of the reduced Gr\"obner basis of the corresponding shuffle operad $\calO^f$ is a left comb whose second smallest leaf is a sibling of the minimal leaf.
\end{itemize}
Then the variety $\mathfrak{M}$ has the Nielsen--Schreier property.
\end{theorem}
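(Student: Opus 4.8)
The strategy is to reduce the Nielsen--Schreier property to a freeness statement that can be checked homologically, and then to verify the homological vanishing using the two combinatorial conditions (M1) and (M2) via Gröbner bases for shuffle operads. By the criterion recalled in the introduction (from \cite{MR1302528,MR1399590}), it suffices to show that for every free $\mathfrak{M}$-algebra $A$ the universal multiplicative enveloping algebra $U_\mathfrak{M}(A)$ is a free associative algebra, and that for every homogeneous subalgebra $H\subseteq A$ the module $U_\mathfrak{M}(A)$ is free as a right $U_\mathfrak{M}(H)$-module. The first author and Tamaroff's circle of ideas, together with Proposition~\ref{prop:free-alg-homol} and Proposition~\ref{prop:Homol}, lets us translate both requirements into the vanishing of suitable operadic homology groups; because all relevant algebras and modules here carry the natural positive grading in which the generators sit in degree one, Proposition~\ref{prop:free-alg-homol} applies. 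Since all these vanishing conditions are equalities of vector spaces to zero, by the remark at the end of Section~2.6 we may verify them after applying the forgetful functor $\calO\mapsto\calO^f$, i.e.\ entirely in the world of shuffle operads.

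First I would isolate the key technical lemma, referred to as Lemma~\ref{lm:freepartial} in the text: condition (M2) should imply that the twisted associative algebra $\partial(\calO)$ (equivalently, via Proposition~\ref{prop:twoder}, the shuffle algebra $\partial^\Sha(\calO^f)$) is free as a right $\calO$-module, with the generating species $\calY$ explicitly read off from the normal monomials. The point of the ``left comb with the two smallest leaves siblings'' shape in (M2) is exactly that such leading terms, when one fixes the minimal leaf and regards it as the distinguished slot ``$-\infty$'', become leading terms of a Gröbner basis presenting $\partial^\Sha(\calO^f)$ as a right module; the reduced monomials then give a basis compatible with the right $\calO$-action, which is precisely freeness of the right module. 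Spelling this out is the combinatorial heart of the argument: one has to match the divisibility relation in the free shuffle operad with divisibility in the relevant free right module, and check that no new S-polynomials appear. From $\partial(\calO)$ free over $\calO$ one gets, by the PBW-type theorem of \cite{MR4381941} together with Proposition~\ref{prop:free-alg-homol} applied to the free algebra (whose enveloping algebra is then $\calY(A)$), that $U_\mathfrak{M}(A)$ is a free associative algebra; this handles condition (1) of the Nielsen--Schreier criterion and is most of the content of Lemma~\ref{lm:freepartial}.

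Next I would use condition (M1) to handle the relative freeness, i.e.\ that $U_\mathfrak{M}(A)$ is a free right $U_\mathfrak{M}(H)$-module for a homogeneous subalgebra $H$. The mechanism is the homological criterion of Proposition~\ref{prop:Homol}: one must show that a certain bar construction $\mathsf{B}_\bullet(\mathbbold{1},\calO_+,\calL)$, with $\calL$ the left module built from $\partial(\calO)$ and the inclusion $H\hookrightarrow A$, has vanishing positive homology. The reverse graded path-lexicographic ordering in (M1) is chosen so that the leading term of each Gröbner basis element has its minimal leaf directly attached to the root; this shape guarantees that when one performs reductions ``from the root downwards'' along the distinguished input, the module structure over $\calO_+$ (acting away from that input) interacts cleanly with the operad relations, so that the bar complex is quasi-isomorphic to something concentrated in degree zero — a Koszulness-type / acyclicity argument organized by the Gröbner filtration. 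Equivalently, (M1) ensures that $\partial(\calO)$, with its right $\calO$-module structure, also behaves well with respect to the left module structure coming from restriction along subalgebra inclusions, yielding the freeness of $U_\mathfrak{M}(A)$ over $U_\mathfrak{M}(H)$ after applying Proposition~\ref{prop:free-alg-homol} to the quotient $\calO$-algebra data or directly via Proposition~\ref{prop:Homol}. Finally, assembling: (M2) $\Rightarrow$ condition (1), and (M1)+(M2) $\Rightarrow$ condition (2), so the criterion of \cite{MR1399590} gives the Nielsen--Schreier property for $\mathfrak{M}$; the automatic passage to superalgebras follows since the whole argument lives on the shuffle-operad side where symmetric group actions (hence signs) are invisible.

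**Main obstacle.** The hard part will be the precise bookkeeping in the two reductions: showing that the Gröbner basis of $\calO^f$ for the orderings prescribed by (M1) and (M2) induces, without creating new obstructions, a Gröbner basis for the presentation of $\partial^\Sha(\calO^f)$ as a right module (for (M2)) and a collapsing of the bar complex $\mathsf{B}_\bullet(\mathbbold{1},\calO_+,\calL)$ (for (M1)). In both cases one needs that the distinguished leaf — the minimal one, respectively the root-adjacent minimal one — never gets ``buried'' under a reduction, so that divisibility in the module is controlled by divisibility of leading terms in the operad. Verifying that the shapes mandated by (M1) and (M2) are exactly what is needed for this, and that the two conditions can be checked against \emph{different} orderings simultaneously, is where the genuine work lies; everything else is an application of the homological machinery recalled above.
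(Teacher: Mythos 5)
Your high-level reduction --- invoking the criterion of \cite{MR1302528,MR1399590} and checking its two conditions --- is a genuinely different route from the paper, which never uses that criterion and instead combines Shirshov's trick (reducing to the specific subalgebras $H_n$), the homological freeness criterion of Proposition~\ref{prop:free-alg-homol}, Fresse's comparison of operadic homology with $\mathrm{Tor}$ over the universal envelope (which requires an extra ingredient you omit: freeness of the right $\calO$-module $\Omega^1_\calO$ of K\"ahler differentials), a base-change spectral sequence, and Cohn's theorem on submodules of free modules over free associative algebras. That choice of route is not the problem. The genuine gap is that you have attached the two hypotheses to the wrong conclusions. The twisted associative algebra product on $\partial(\calO)$ (equivalently, the shuffle algebra product on $\partial^\Sha(\calO^f)$) is composition at the \emph{dedicated} input, which on the shuffle side is the \emph{minimal} leaf; hence it is (M1), ``minimal leaf directly connected to the root,'' that guarantees that a product of normal forms stays normal, so that $\partial(\calO)$ is a free twisted associative algebra and $U_\mathfrak{M}(A)$ is a free associative algebra for $A$ free. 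The left-comb shape in (M2) instead controls compositions at \emph{non-minimal} leaves, i.e.\ the right $\calO$-module structure (PBW), and the sibling condition in (M2) is what allows one to split off the generators coming from a subalgebra $H$ and obtain freeness of $U_\mathfrak{M}(A)$ over $U_\mathfrak{M}(H)$. Your assignment ``(M2) $\Rightarrow$ condition (1), (M1) $\Rightarrow$ condition (2)'' is therefore backwards, and the specific inference you use for condition (1) is false: freeness of $\partial(\calO)$ as a right $\calO$-module only identifies the underlying vector space $U_\calO(A)\cong\calY(A)$ and says nothing about $U_\calO(A)$ being free as an associative algebra. The operad of commutative associative algebras is a counterexample: its Gr\"obner basis has left-comb leading terms, so the PBW property holds, yet $U(\k[x_1,\ldots,x_n])$ is a polynomial ring, not a free associative algebra.

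Beyond the swap, the two mechanisms you sketch would not carry the weight even for the correct targets. For the relative freeness you offer only ``the bar complex is quasi-isomorphic to something concentrated in degree zero --- a Koszulness-type / acyclicity argument organized by the Gr\"obner filtration,'' which is not an argument; the paper instead proves the statement directly for algebras with zero operations using two-sorted species (this is where the sibling condition of (M2) enters), and then lifts it to arbitrary subalgebras through the PBW filtration on $U_\calO(A)$ --- a step you would also need in order to apply the criterion of \cite{MR1399590}, which demands condition (2) for \emph{every} homogeneous subalgebra. You should also decide between the two frameworks: once you invoke the algebraic criterion of \cite{MR1399590}, the bar constructions and Propositions~\ref{prop:Homol} and~\ref{prop:free-alg-homol} play no role, so their appearance in your plan signals that the logical structure has not been pinned down.
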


Before proving this theorem, let us give an example of how it can be applied. Consider the operad encoding Lie algebras. It is immediate to check that the shuffle Jacobi identity
 \[
\lbincomb{[-,-]}{[-,-]}{1}{2}{3}-\lbincomb{[-,-]}{[-,-]}{1}{3}{2}-\rbincomb{[-,-]}{[-,-]}{1}{2}{3}
 \]
forms the reduced Gr\"obner basis for both orderings; its last monomial is the leading term for the reverse graded path-lexicographic ordering, and its first monomial is the leading term for the graded path-lexicographic ordering, and the combinatorial conditions of our theorem clearly hold. Thus, our result in particular gives a new one-line proof of the Shirshov--Witt theorem on subalgebras of free Lie algebras. 

\begin{proof}
Let us begin with outlining the general plan of the proof. We shall use the first combinatorial condition to prove that for every free $\mathfrak{M}$-algebra $A$, its universal multiplicative enveloping algebra $U_\mathfrak{M}(A)$ is a free associative algebra. We shall then use one half of the second combinatorial condition (the left comb condition) to prove that the variety $\mathfrak{M}$ has the PBW property for universal multiplicative enveloping algebras. Combining the two previous results, we shall show that the variety $\mathfrak{M}$ has the PBW property for K\"ahler differentials of algebras. Next, we shall use the other half of the second combinatorial condition (the maximal leaf condition) to prove that for every $\mathfrak{M}$-algebra $A$ with zero structure operations and every subspace $H\subset A$, the universal multiplicative enveloping algebra $U_\mathfrak{M}(A)$ is a free $U_\mathfrak{M}(H)$-module. Finally, we shall combine the results we proved to show, using the homological criterion of freeness, that subalgebras of free algebras are free. 

\begin{lemma}\label{lm:freepartial}
Suppose that $\calO$ is an operad satisfying Property (M1). For every free $\calO$-algebra $A$, the universal multiplicative enveloping algebra $U_\calO(A)$ is a free associative algebra.
\end{lemma}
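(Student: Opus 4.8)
### Proof proposal for Lemma~\ref{lm:freepartial}

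\textbf{Setup.} The plan is to translate the statement into operadic language using the isomorphism $U_\calO(A)\cong\partial(\calO)\circ_\calO A$ from Section~\ref{sec:deriv}. Since $A=F_\calO\langle X\rangle$ is free, we have $A\cong\calO(V)$ with $V=\k X$, and the relative composite product collapses: $U_\calO(A)\cong\partial(\calO)\circ_\calO\calO(V)\cong\partial(\calO)(V)$, i.e. $U_\calO(A)$ is the value on $V$ of the linear species $\partial(\calO)$, with its twisted-associative-algebra structure (concatenation of operations at the dedicated input). An associative algebra of this form is free precisely when the twisted associative algebra $\partial(\calO)$ is free; moreover, by Proposition~\ref{prop:twoder} and the fact that the forgetful functor neither creates nor destroys freeness (zero is zero with or without symmetric group actions), it suffices to prove that the shuffle algebra $\partial^\Sha(\calO^f)$ is free. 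So the lemma reduces to: \emph{Property (M1) implies $\partial^\Sha(\calO^f)$ is a free shuffle algebra.}

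\textbf{Main step: a monomial basis from the Gr\"obner basis.} First I would fix the reduced Gr\"obner basis $G$ of $\calO^f$ for the reverse graded path-lexicographic ordering, so that a $\k$-basis of $\calO^f$ is given by the tree monomials that are \emph{normal} (not divisible by any leading term of $G$). Recall $\partial^\Sha(\calO^f)(I)=\calO^f(\{-\infty_I\}\sqcup I)$, and its shuffle-algebra product is operadic composition at the leaf $-\infty$. Thus $\partial^\Sha(\calO^f)$ has a monomial basis indexed by normal tree monomials whose minimal leaf is labelled by the new symbol $-\infty$; I want to identify exactly which such monomials are \emph{indecomposable}, i.e. cannot be written as $\alpha\circ_{-\infty}\beta$ with $\alpha,\beta$ smaller. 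A tree monomial $T$ with minimal leaf $-\infty$ decomposes nontrivially along the edge directly below the root vertex: if the root vertex $v$ has one child which is an internal vertex carrying the subtree along the $-\infty$-path, then $T=\alpha\circ_{-\infty}\beta$ where $\alpha$ is the root vertex alone (its $-\infty$-slot being the distinguished child) and $\beta$ the rest. Here is where (M1) enters: since every leading term of $G$ has its minimal leaf directly attached to the root, \emph{normality is local to the root} in the following sense — a tree monomial is normal iff the subtree obtained by deleting the root vertex is normal and the resulting grafting creates no new leading-term divisor at the root; because the minimal leaf of any leading term sits at the root, the obstruction to normality near the root only involves the root vertex and \emph{its} children (which are leaves or minimal leaves of subtrees), not the deeper structure. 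This should let me show that the indecomposables are precisely the normal tree monomials whose root vertex $v$ has the property that the $-\infty$-child of $v$ is a \emph{leaf} (equivalently $v$ is not of the "graftable" shape), and that every normal monomial factors uniquely as an ordered shuffle product of such indecomposables by peeling off root vertices one at a time along the $-\infty$-path. Unique factorization into indecomposables is exactly freeness of the shuffle algebra, with generating species $\calY$ spanned by these indecomposable monomials.

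\textbf{Packaging and the obstacle.} Once the unique factorization is established, I would conclude that $\partial^\Sha(\calO^f)$ is the free shuffle algebra on $\calY$, hence $\partial(\calO)^f$ is a free twisted associative algebra, hence $\partial(\calO)$ is, hence $U_\calO(A)\cong\partial(\calO)(V)$ is a free associative algebra for $A=\calO(V)$ free; the generators are $\calY(V)$. The step I expect to be the main obstacle is making the "normality is local to the root" argument fully rigorous: I must verify that grafting a normal tree $\beta$ below a single root vertex $\alpha$ produces a normal tree exactly when a condition depending only on $\alpha$ and the top layer of $\beta$ holds, and that conversely every normal tree with $-\infty$ minimal arises this way with the factorization uniquely determined. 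This hinges on (M1) guaranteeing that no leading term of $G$ can "straddle" the root edge in a way that involves vertices two or more levels deep along the $-\infty$-path — precisely because the minimal leaf of each leading term is pinned to the root, any divisibility witnessed inside the grafted tree is witnessed either entirely inside $\beta$ (excluded by normality of $\beta$) or entirely at the root vertex together with labels of its children. Checking the shuffle-tree bookkeeping here — in particular that the "local increasing condition" on shuffle trees is respected by the peeling, and that the divided-power indexing set $\min(I_1)<\cdots<\min(I_n)$ matches the order in which root vertices are peeled — is the delicate part, but it is combinatorial rather than conceptual.
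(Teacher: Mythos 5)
Your proposal follows the paper's proof essentially verbatim: reduce via $U_\calO(A)\cong\partial(\calO)(V)$ and the homological criterion to freeness of the shuffle algebra $\partial^\Sha(\calO^f)$, take the min-indecomposable normal monomials (those whose minimal leaf is directly attached to the root) as generators, and use (M1) to rule out a leading-term divisor straddling the gluing edge, since such a divisor would have its minimal leaf at depth at least two below its own root. The step you flag as delicate is exactly the one the paper dispatches in a single sentence, and the mechanism you identify for it (the minimal leaf of every leading term being pinned to the root) is the correct one.
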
 

\begin{proof}
For the free algebra $A=\calO(V)$, we have 
 \[
U_\calO(A)\cong\partial(\calO)\circ_{\calO} A=\partial(\calO)\circ_{\calO} \calO(V)\cong\partial(\calO)(V), 
 \] 
with the product of $U_\calO(A)$ induced from that of $\partial(\calO)$ on the twisted associative algebra level, so it is enough to show that $\partial(\calO)$ is free as a twisted associative algebra. To establish that, it is sufficient to consider the associated shuffle operad $\calO^f$ and prove the corresponding result for the shuffle algebra $\partial^\Sha(\calO^f)$. Indeed, we are working with connected operads over a field of characteristic zero, so the homological criterion of freeness implies that $\partial(\calO)$ is a free twisted associative algebra if and only if $\partial(\calO)^f$ is a free shuffle algebra, and we know that $\partial(\calO)^f\cong\partial^\Sha(\calO^f)$.

Let us denote by $G_r$ the reduced Gr\"obner basis of the shuffle operad $\calO^f$ for the reverse graded path-lexicographic ordering, and by $\calN_r$ the ordered species of monomials that are normal with respect to $G_r$. From the definition of the shuffle algebra structure on $\partial^\Sha(\calO^f)$, it immediately follows that it is generated by ``min-indecomposable'' elements of $\calN_r$, that is shuffle trees which have their minimal leaf directly connected to the root. Moreover, if $\alpha\in\partial^\Sha(\calO^f)(I)$ and $\beta\in\partial^\Sha(\calO^f)(J)$ are two normal forms, then $\mu_{I,J}(\alpha\otimes\beta)$ is a normal form. Indeed,  composing two normal forms at the minimal leaf of a normal form cannot create an element divisible by a leading term of the Gr\"obner basis, by our assumption on the leading terms. 
\end{proof}

The following result follows from \cite[Th.~5.16]{MR4381941}, but we give a proof to make the exposition self-contained. 

\begin{lemma}\label{lm:partialfree}
Suppose that $\calO$ is an operad satisfying the first part of Property (M2): for the graded path-lexicographic ordering, each leading term of the reduced Gr\"obner basis of the corresponding shuffle operad $\calO^f$ is a left comb. The right $\calO$-module $\partial(\calO)$ is free. Accordingly, there is a PBW-type theorem for multiplicative universal envelopes of $\calO$-algebras: there exists a linear species $\calY$ such that for every $\calO$-algebra $A$, the underlying vector space of $U_\calO(V)$ is isomorphic to $\calY(A)$ functorially with respect to $\calO$-algebra morphisms.
\end{lemma}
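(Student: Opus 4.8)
The plan is to prove freeness of $\partial(\calO)$ as a right $\calO$-module by passing to the shuffle world and exhibiting an explicit basis of normal forms for $\partial^\Sha(\calO^f)$ that is manifestly closed under the right $\calO^f$-module action. Concretely, let $G$ be the reduced Gr\"obner basis of $\calO^f$ for the graded path-lexicographic ordering and let $\calN$ be the ordered species of $G$-normal shuffle trees; by the standard operadic Gr\"obner-basis machinery \cite{MR2667136,MR3642294}, $\calN$ is a basis of $\calO^f$ itself. Since $\partial^\Sha(\calO^f)(I)=\calO^f(\{-\infty_I\}\sqcup I)$, we get a basis of $\partial^\Sha(\calO^f)$ indexed by those $G$-normal shuffle trees on the leaf set $\{-\infty_I\}\sqcup I$ in which, necessarily, $-\infty_I$ is the minimal leaf. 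I would then argue: (i) the right $\calO^f$-action on $\partial^\Sha(\calO^f)$ is substitution into the non-minimal leaves, i.e.\ into leaves other than $-\infty_I$; and (ii) under the left-comb hypothesis on the leading terms of $G$, substituting a $G$-normal tree into a non-minimal leaf of a $G$-normal tree of the above shape never produces a subtree divisible by a leading term of $G$. Granting (ii), the span of these normal forms is a right $\calO^f$-submodule, and since it is all of $\partial^\Sha(\calO^f)$, the module is spanned by the images of a sub-collection $\calY^f$ of "indecomposable" normal forms (those not obtained by a nontrivial such substitution) under the free action; a dimension/normal-form count then shows the action map $\calY^f\circ_\Sha\calO^f\to\partial^\Sha(\calO^f)$ is an isomorphism, so $\partial^\Sha(\calO^f)$ is a free right $\calO^f$-module.

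The heart of the argument is step (ii), and this is where the left-comb hypothesis of Property (M2) is used decisively. A leading term $L$ of $G$ is, by hypothesis, a left comb: all of its internal vertices lie on the path from the root to its minimal leaf. Suppose for contradiction that grafting a $G$-normal tree $T'$ into a non-minimal leaf $\ell$ of a $G$-normal tree $T$ (with $\ell\ne-\infty$) creates an occurrence of $L$. The occurrence of $L$ is a connected subtree using some internal vertices of $T$ and some of $T'$; since $T$ and $T'$ are individually $G$-normal, this subtree must straddle the grafting edge, so it uses at least one vertex of $T$ above $\ell$ and at least one vertex of $T'$. But in a left comb, the spine through the minimal leaf carries every internal vertex; the grafting happens at a leaf $\ell$ which, by planarity of shuffle trees and the local increasing condition, is \emph{not} the minimal leaf of the straddling subtree (the minimal leaf of $L$ sits inside the $T$-part, below where $T'$ was attached, because $\ell\ne-\infty$ forces $\min(L)$ to descend on the $T$-side of the graft). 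Hence the $T'$-vertices of the occurrence of $L$ lie off the spine of $L$ — contradicting that $L$ is a left comb with all vertices on that spine. (One has to do the bookkeeping of leaf orders carefully: the point is that a left comb has a unique "branch", so any divisor straddling a graft at a non-spine leaf forces a vertex off the branch.) I expect this combinatorial verification — making precise exactly which leaf of a would-be occurrence of $L$ is its minimum, and why the left-comb shape then excludes straddling at a non-minimal leaf — to be the main obstacle, requiring a careful induction on the structure of shuffle trees rather than any deep machinery.

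Once $\partial(\calO)$ is known to be free as a right $\calO$-module, the PBW-type conclusion is immediate from the general result recalled before Proposition \ref{prop:twoder}: by \cite{MR4381941}, freeness of $\partial(\calO)$ as a right $\calO$-module is \emph{equivalent} to the existence of a PBW-type theorem for universal multiplicative enveloping algebras, and in that case, if $\calY$ is the linear species freely generating $\partial(\calO)$, one has $U_\calO(A)\cong\partial(\calO)\circ_\calO A\cong\calY(A)$ naturally in the $\calO$-algebra $A$. The species $\calY$ is precisely the symmetric lift of the "indecomposable normal forms" $\calY^f$ produced above; since freeness is detected after applying the forgetful functor (vanishing of the relevant homology is insensitive to symmetric-group actions, as discussed around Proposition \ref{prop:Homol}), the symmetric statement follows from the shuffle one. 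This also explains the hypothesis bookkeeping: only the left-comb half of Property (M2) is needed here, the "second smallest leaf is a sibling of the minimal leaf" half being reserved for the later freeness-of-modules step in the proof of Theorem \ref{th:SchrComb}.
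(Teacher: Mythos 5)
Your overall strategy is the paper's: pass to the shuffle side, work with normal forms for the graded path-lexicographic Gr\"obner basis, identify the free generators among them, and deduce the PBW statement from \cite{MR4381941} together with the transfer of freeness through the forgetful functor via the homological criterion. However, the combinatorial heart of your argument, step (ii), is false as stated, and the parenthetical you offer in its support is exactly where it breaks. You claim that grafting a normal form $T'$ into \emph{any} non-minimal leaf $\ell$ of \emph{any} normal form $T$ with minimal leaf $-\infty$ cannot create a divisor by a left-comb leading term, because ``$\ell\ne-\infty$ forces $\min(L)$ to descend on the $T$-side of the graft''. That implication only holds when the root $u$ of the would-be occurrence of $L$ lies on the path from the root of $T$ to $-\infty$, so that the minimum of all labels below $u$ sits on the $T$-side. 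If $u$ is off that path, $\ell$ may be the minimal leaf of the subtree of $T$ rooted at $u$, the minimal leaf of the occurrence then descends into $T'$, and a straddling left-comb occurrence is possible. Concretely, take $\calO=\mathrm{Com}$: for the graded path-lexicographic ordering its reduced Gr\"obner basis has the two arity-three left combs as leading terms, so the hypothesis of the lemma holds. The monomial $a_{-\infty}(a_1a_2)$ is normal, yet grafting the binary corolla at its non-minimal leaf $1$ produces $a_{-\infty}((a_1a_2)a_3)$, which is divisible by the leading term $(a_1a_2)a_3$. Hence the normal monomials of $\partial^\Sha(\calO^f)$ are \emph{not} closed under the right action, the action is not monomial on this basis, and your identification of free generators as indecomposables of a monomial action, together with the ensuing normal-form count, does not go through as written.

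The repair, which is what the paper does, is to name the generators up front: they are the normal monomials that are themselves left combs (these do coincide with your ``indecomposables''). One then needs the closure statement only for grafting normal forms at the non-minimal leaves of such a left comb $T_0$. There the root $u$ of any straddling occurrence automatically lies on the $-\infty$-spine of $T_0$, so $-\infty$ lies below $u$, the minimal leaf of the occurrence is on the $T_0$-side, and the left-comb shape of the leading term forces all of its vertices onto that spine --- contradicting the assumption that the occurrence meets the grafted tree. Since every normal form decomposes uniquely as its $-\infty$-spine (a connected divisor, hence normal) with normal subtrees grafted at the spine's non-minimal leaves, the action map from the free right module on the left-comb normal forms is a bijection on monomial bases. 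With this correction, the remainder of your argument (the appeal to \cite{MR4381941} and the symmetric/shuffle transfer) is exactly the paper's proof.
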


\begin{proof}
Let us denote by $G_l$ the reduced Gr\"obner basis of the shuffle operad $\calO^f$ for the graded path-lexicographic ordering, by $\calN_l$ the ordered species of monomials that are normal with respect to $G_l$, and by $\calN^{(0)}_l\subset\calN_l$ the ordered subspecies of normal monomials that are left combs. It is clear that the right $\calO^f$-module $\partial^\Sha(\calO^f)$ is freely generated by $\partial^\Sha(\calN^{(0)}_l)$. Indeed, this follows from the fact that composing two normal forms at a non-minimal leaf cannot create an element divisible by a leading term of the Gr\"obner basis, by our assumption on the leading terms. Since we are working with connected operads over a field of characteristic zero, the homological criterion of freeness implies that $\partial(\calO)$ is free as a right $\calO$-module, which in turn implies that the operad $\calO$ has the PBW property for universal multiplicative enveloping algebras.
\end{proof}

The next result is interesting in its own rights, as it shows how to apply Gr\"obner bases for operads to study modules of K\"ahler differentials. We warn the reader that the claim in \cite{MR2775896} that the PBW-type theorem for K\"ahler differentials is available whenever the PBW theorem for universal envelopes is available is false already for commutative associative algebras.

\begin{lemma}\label{lm:kahlerfree}
Suppose that $\calO$ is an operad satisfying Property (M1) and the first part of Property (M2). The right $\calO$-module $\Omega^1_\calO$ is free. Accordingly, there is a PBW-type theorem for modules of K\"ahler differentials of $\calO$-algebras: there exists a linear species $\calZ$ such that for every $\calO$-algebra $A$, the underlying vector space of $\Omega^1_A$ is isomorphic to $\calZ(A)$ functorially with respect to $\calO$-algebra morphisms.
\end{lemma}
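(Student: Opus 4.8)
The plan is to reduce everything, as in the proofs of Lemmas~\ref{lm:freepartial} and~\ref{lm:partialfree}, to a combinatorial statement about the associated shuffle operad, and then to combine the two freeness results already obtained for $\partial^\Sha(\calO^f)$. By the homological criterion of freeness (applied, as before, to connected operads over a field of characteristic zero), the right $\calO$-module $\Omega^1_\calO$ is free as soon as its image under the forgetful functor is free over $\calO^f$; and, repeating the discussion of \cite[Sec.~10.3]{MR2494775} in the ordered setting, this image is the right $\calO^f$-module $\Omega^1_{\calO^f}$ with the presentation obtained from that of $\Omega^1_\calO$ by ordering all arguments. I would first record the two facts I intend to use: the underlying ordered species of $\Omega^1_{\calO^f}$ has a basis consisting of pairs $(S,b)$ where $S$ is a normal monomial of $\calO^f$ (for the graded path-lexicographic ordering, with Gr\"obner basis $G_l$ and normal monomials $\calN_l$) and $b$ is one of its leaves, the ``differentiated variable''; equivalently $\Omega^1_{\calO^f}\cong\partial^\Sha(\calO^f)\cdot_\Sha\mathbbold{1}$, the $\mathbbold{1}$-factor recording $b$ and the $\partial^\Sha(\calO^f)$-factor recording $S$ with $b$ moved into its distinguished (minimal) slot. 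Reading off the two defining relations of K\"ahler differentials, the right $\calO^f$-action in these terms is: substitution of $q\in\calO^f_+$ into a leaf $\ell\neq b$ of $S$, followed by reduction --- which is exactly the right $\calO^f$-module structure carried by the first factor $\partial^\Sha(\calO^f)$ --- together with substitution of $q$ into $b$, which by the Leibniz rule amounts to right-multiplying the $\partial^\Sha(\calO^f)$-factor by $q$ inside the shuffle algebra $\partial^\Sha(\calO^f)$ and simultaneously redistributing the marker $b$ over the leaves of~$q$.

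The core of the argument would then be to exhibit a generating ordered subspecies $\calZ^f$ of $\Omega^1_{\calO^f}$ and show that $\Omega^1_{\calO^f}$ is freely generated by it. I would take $\calZ^f$ to consist of the pairs $(S,b)$ in which $S$ is a \emph{left-comb} normal monomial of $\calO^f$ and $b$ is its minimal leaf --- equivalently, $\calZ^f=\partial^\Sha(\calN^{(0)}_l)$ with the minimal slot playing the role of the differentiated variable, where $\calN^{(0)}_l$ is the subspecies of left-comb normal monomials that generates $\partial^\Sha(\calO^f)$ freely as a right $\calO^f$-module by the proof of Lemma~\ref{lm:partialfree}. That $\calZ^f$ generates $\Omega^1_{\calO^f}$ follows by using the Leibniz rule to move the marker onto the minimal leaf of the relevant left-comb constituent of $S$, and then the freeness of $\partial^\Sha(\calO^f)$ over $\calO^f$ on $\calN^{(0)}_l$ to absorb the substitutions at the remaining, non-minimal leaves; that no leading term of $G_l$ is produced along the way is precisely the first part of Property~(M2). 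The harder half is to check that the structure map $\calZ^f\circ_\Sha\calO^f\to\Omega^1_{\calO^f}$ has no kernel, for which I would argue that the leading monomial of the image of a pair (generator, operadic monomial) determines both factors. The reconstruction proceeds in two stages: the position of the marker, together with Property~(M1) --- which by Lemma~\ref{lm:freepartial} makes $\partial^\Sha(\calO^f)$ a \emph{free shuffle algebra}, so that every normal monomial has a unique factorization into min-indecomposable pieces --- recovers the Leibniz part of the action and the distinguished slot, and the left-comb condition of Property~(M2), exactly as in Lemma~\ref{lm:partialfree}, recovers the substitutions at the remaining leaves. Since freeness is verified after the forgetful functor, $\Omega^1_\calO$ is free as a right $\calO$-module, and the PBW-type statement for modules of K\"ahler differentials is then a consequence of the result of \cite{DFC} quoted above.

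The main obstacle I anticipate is not any single step but the interplay between the \emph{two} ``freeness coordinate systems'' available on $\partial^\Sha(\calO^f)$ --- the one as a free right $\calO^f$-module, coming from the graded path-lexicographic Gr\"obner basis and used to control substitutions at the non-differentiated leaves, and the one as a free shuffle algebra, coming from the reverse graded path-lexicographic Gr\"obner basis and used to control the Leibniz migration of the marker --- which are attached to \emph{different} admissible orderings. The point is that freeness of the right $\calO^f$-module $\Omega^1_{\calO^f}$ can be tested on any convenient monomial basis, so one should fix a single auxiliary total ordering on the pairs $(S,b)$ (refining the $G_l$-ordering of $S$ by, say, preferring markers closer to the root), verify that the structure map $\calZ^f\circ_\Sha\calO^f\to\Omega^1_{\calO^f}$ is triangular with respect to it, and conclude that it is bijective. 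Carrying out this triangularity check --- tracking exactly how the marker and the spine of the tree evolve under each generator of both structures, with all the disambiguation done by the ``local increasing condition'' on leaf labels --- is the technical heart of the proof.
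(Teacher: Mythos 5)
Your setup is sound and your proposed generating species is the right one: the underlying species of $\Omega^1_\calO$ is indeed $\partial(\calO)\cdot\mathbbold{1}$, and the free generators you propose (left-comb normal monomials from $\calN^{(0)}_l$ with the marker in the minimal slot) are exactly the generators that the paper's argument produces. You have also correctly diagnosed where the difficulty lies --- but that difficulty is not resolved. The freeness of $\Omega^1_\calO$ over the proposed generators, which is the actual content of the lemma, is deferred to an unperformed ``triangularity check'' with respect to an auxiliary ordering that is only described in outline. This is a genuine gap, and not a routine one, because the two structures you want to play off against each other are expressed in incompatible monomial bases: the min-indecomposable factorization of $\partial^\Sha(\calO^f)$ as a free shuffle algebra lives in the $G_r$-normal forms (reverse graded path-lexicographic ordering, Property (M1)), while the free right $\calO^f$-module decomposition on $\calN^{(0)}_l$ lives in the $G_l$-normal forms (graded path-lexicographic ordering, Property (M2)); a monomial normal for one ordering need not be normal for the other, so there is no single monomial basis in which both decompositions are simultaneously and obviously upper triangular, and the claim that ``the leading monomial of the image determines both factors'' is precisely what has to be proved.

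The paper avoids this clash by a different device. It realizes $\Omega^1_\calO$ as the weight-one part of the operad $\calO_{\mathrm{Der}}$ of $\calO$-algebras equipped with a derivation $d$, works with a single ordering (the reverse graded path-lexicographic one) throughout, shows by a generating-function count (using $\calO_{\mathrm{Der}}\cong\calO\circ\k[d]$) that adjoining the derivation relations $d\circ_1\alpha=\sum_i\alpha\circ_i d$, with leading terms $\alpha\circ_k d$, to $G_r$ yields a Gr\"obner basis, and then reads off the weight-one normal monomials as graftings of a normal monomial $T''$ onto the $\star$-leaf of a normal monomial $T'$. The key point is that the only input needed from the other ordering is the \emph{basis-free} statement of Lemma~\ref{lm:partialfree} that $\partial(\calO)$ is free as a right $\calO$-module, applied to the species of monomials $T'$; no simultaneous normal-form bookkeeping in two orderings is required. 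If you want to salvage your direct approach, the cleanest repair is to do the same: use your explicit normal forms only to establish generation, and then invoke the abstract freeness of $\partial(\calO)$ (rather than its $G_l$-basis) together with the freeness of the action at the marked slot to obtain injectivity.
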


\begin{proof}
Let us consider the operad $\calO_{\mathrm{Der}}$ encoding the datum of an $\calO$-algebra $A$ equipped with a degree zero derivation $d$. This operad has a weight grading for which all elements of $\calO$ are of weight $0$ and the element $d$ is of weight $1$; the elements of weight $1$ in $\calO_{\mathrm{Der}}$ form a right $\calO$-module that is obviously isomorphic to $\Omega^1_\calO$. Thus, we may study the module $\Omega^1_\calO$ using a suitable Gr\"obner basis for the operad $\calO_{\mathrm{Der}}$. 

Let us use the reverse graded path-lexicographic ordering for the operad $\calO_{\mathrm{Der}}^f$. Since we now use elements of arity one, this ordering at a first glance does not satisfy the descending chain condition; however, the presence of the extra weight grading mentioned above repairs this issue, since each weight graded component is finite-dimensional. If we focus on elements on weight $0$, we are working with the operad $\calO$, and, as in the proof of Lemma \ref{lm:freepartial}, we denote by $G_r$ the reduced Gr\"obner basis of the shuffle operad $\calO^f$ for the ordering we consider. To incorporate the derivation condition, we have to impose, for each generator $\alpha\in\calO(k)$, the derivation relation
$d\circ_1\alpha=\sum_{i=1}^k\alpha\circ_i d$.  
For the reverse graded path-lexicographic ordering, the leading term of this relation is $\alpha\circ_k d$. Let us show that the monomials that are normal with respect to $G_r$ and with respect to the derivation relations form a basis of the operad $\calO_{\mathrm{Der}}^f$. For that, we note that these normal monomials are precisely the normal monomials of $\calO^f$ with additional decorations: each edge of the shuffle tree, whether internal or external, is decorated with some power $d^s$, unless this edge is the last input edge of some internal vertex, in which case it cannot have any decoration. Thus, the decoration space of a shuffle tree with $p$ edges and $s$ internal vertices is $\k[d]^{\otimes (p-s)}$, and the generating function of dimensions of weight graded components of the vector space spanned by these elements is $\frac{1}{(1-t)^{p-s}}$. At the same time, it is well known that $\calO_{\mathrm{Der}}\cong \calO\circ\k[d]$: for each basis element of $\calO$ of arity $a$, one may decorate each of its $a$ leaves by a power of $d$ and obtain a basis, so we get a vector space with the generating function of dimensions of weight graded components $\frac{1}{(1-t)^{a}}$. It remains to notice that $a=p-s$, since this property is true for trees with one internal vertex, and is preserved under composition. Therefore, the monomials that are normal with respect to $G_r$ and with respect to the derivation relations (which necessarily form a spanning set of $\calO_{\mathrm{Der}}^f$) span a vector space whose generating function of dimensions of weight graded components is equal to that of $\calO_{\mathrm{Der}}$, so they form a basis of $\calO_{\mathrm{Der}}$, and thus if we adjoin to $G_r$ the derivation relations, we get a Gr\"obner basis. 

Let us examine the normal monomials of weight one with respect to the Gr\"obner basis that we found (as we mentioned above, the space of elements of weight one form a right $\calO$-module isomorphic to $\Omega^1_\calO$). Each such monomial is obtained by grafting of a normal monomial $T''\in\calO(I'')$ onto the only child labelled $\star$ of a vertex labelled $d$ of a certain normal monomial $T'\in\calO(I'\sqcup\{\star\})$. We note that if we fix $I''$ and change the set $I'$, the set of possible monomials $T'$ is identified with $\partial(\calO)$. According to Lemma \ref{lm:partialfree}, the right $\calO$-module $\partial(\calO)$ is free, and the right action of $\calO$ on $\Omega^1_\calO$ combines the right action $\partial(\calO)$ with the composition at the leaf labelled $\star$, so we conclude that this right module is also free.  
\end{proof}

The next result is perhaps the least obvious of the key steps of the proof, as it relies on an intricate mix of Gr\"obner bases for operads and for shuffle algebras.

\begin{lemma}\label{lm:freemodule}
Suppose that $\calO$ is an operad satisfying Property (M2). For every $\calO$-algebra $A$ with zero operations and any subspace $H\subset A$, viewed as a subalgebra with zero operations, the universal enveloping algebra $U_\calO(A)$ is free as a right $U_\calO(H)$-module.
\end{lemma}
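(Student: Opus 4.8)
The plan is to realize the universal enveloping algebra of the trivial algebra $A$ through Gr\"obner-basis normal forms for the shuffle operad $\calO^f$ and then to extract a free basis over the subalgebra $U_\calO(H)$ directly from the shape of the leading terms guaranteed by~(M2).

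First, fix an ordered basis of the underlying vector space $V$ of $A$ in which every basis vector lying in $H$ precedes a fixed complement $H'$, so $V=H\oplus H'$ with the $H$-basis vectors smallest. Since $\partial(\calO)^f\cong\partial^\Sha(\calO^f)$ as shuffle algebras (Proposition~\ref{prop:twoder}) and the forgetful functor commutes with composition products, the associative algebra $U_\calO(A)=\partial(\calO)\circ_\calO A$ acquires, after passing to $\calO^f$, a refinement to a shuffle algebra $U_{\calO^f}(A):=\partial^\Sha(\calO^f)\circ_{\calO^f}A$ whose underlying associative algebra is $U_\calO(A)$; likewise $U_{\calO^f}(H)$ refines $U_\calO(H)$, and the map coming from $H\hookrightarrow A$ is a map of shuffle algebras. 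It therefore suffices to prove that $U_{\calO^f}(A)$ is free as a right $U_{\calO^f}(H)$-module. Let $G$ be the reduced Gr\"obner basis of $\calO^f$ for the graded path-lexicographic ordering, whose leading terms, by~(M2), are left combs with their two smallest leaves siblings, and let $\calN^{(0)}$ denote the left-comb $G$-normal monomials. By Lemma~\ref{lm:partialfree}, $\partial^\Sha(\calO^f)$ is the free right $\calO^f$-module on $\partial^\Sha(\calN^{(0)})$; combining this with the fact that $A$ has zero operations --- so that any $\calO^f$-monomial in which a nontrivial operation is applied purely to $A$-inputs dies in the relative composite over $A$ --- yields a linear basis of $U_{\calO^f}(A)$ consisting of \emph{decorated left combs}: left-comb $G$-normal monomials of $\calO^f$ with one distinguished minimal leaf (the open slot), the other leaves decorated by the chosen basis of $A$, taken modulo the relabeling of the non-minimal leaves that preserves the shuffle-tree condition; the same description with $H$ in place of $A$ describes $U_{\calO^f}(H)$. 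On basis elements, right multiplication by a decorated left comb from $U_{\calO^f}(H)$ is ``graft it below the open slot, rewrite the resulting $\calO^f$-monomial using $G$, and drop the non-left-comb terms.''

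Now I claim $U_{\calO^f}(A)$ is free over $U_{\calO^f}(H)$ on the set $\calB$ consisting of the trivial comb together with all decorated left combs whose sibling of the minimal leaf --- the smallest non-minimal leaf, which by~(M2) is a child of the bottom vertex --- is decorated by an element of $H'$. Observe first that \emph{every} decorated left comb $R$ in the basis of $U_{\calO^f}(A)$ factors uniquely as ``$T$ with $S$ grafted below its open slot'' with $T\in\calB$ and $S$ a basis element of $U_{\calO^f}(H)$: split off from the bottom of $R$'s comb the maximal run of consecutive hanging leaves whose decorations lie in $H$, take $S$ to be the resulting bottom left comb and $T$ the remainder --- then $T$'s own sibling of the minimal leaf carries an $H'$-decoration (or $T$ is trivial), and, because $H$-basis vectors are smallest, the split is unambiguous. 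Hence $R=T\cdot S$ as soon as that graft requires no $G$-rewriting, and freeness will follow once one shows: for $T\in\calB$ and a basis element $S$ of $U_{\calO^f}(H)$, the product $T\cdot S$ expands into decorated left combs each of which has top part $T$ in the above factorization. I would prove this by analysing how a leading term of $G$ can occur in the graft of $S$ below the open slot of $T$: since $T$ and $S$ are individually $G$-normal, such an occurrence must straddle the grafting point, and then, by~(M2), its minimal and second-smallest leaves are siblings, realized respectively by a downward path-edge of $S$ and an $H$-decorated hanging leaf of $S$; a case analysis shows that rewriting this occurrence, and re-expressing any non-left-comb monomial it produces (using the vanishing of the operations of $A$), only redistributes the $H$-decorated hanging leaves lying near the open slot and leaves $T$ --- in particular its distinguished $H'$-decorated leaf --- untouched, so the top part stays $T$. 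Granting the claim, a triangularity argument shows $\{T\cdot S\}_S$ is a basis of the span of the decorated left combs with top part $T$; these spans over $T\in\calB$ form a direct-sum decomposition of $U_{\calO^f}(A)$, each summand equals $T\cdot U_{\calO^f}(H)$, and $S\mapsto T\cdot S$ is bijective onto this basis, so $U_{\calO^f}(A)=\bigoplus_{T\in\calB}T\cdot U_{\calO^f}(H)$ is free on $\calB$ with each summand a free rank-one module.

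The step I expect to be the main obstacle is the claim that $T\cdot S$ keeps top part $T$: this is really a statement about (enough of) the reduced Gr\"obner basis of the \emph{shuffle algebra} $U_{\calO^f}(A)$ --- namely that its leading terms are supported near the open slot --- and it has to be teased out of the operadic Gr\"obner basis $G$ together with the normal-form analysis of $\partial^\Sha(\calO^f)$ from Lemma~\ref{lm:partialfree}. It is precisely the combinatorial shape forced by~(M2), each leading term of $G$ being a left comb whose two smallest leaves are siblings, that prevents a straddling occurrence from reaching past the grafting point; this is the ``intricate mix of Gr\"obner bases for operads and for shuffle algebras'' referred to in the statement.
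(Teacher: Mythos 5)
Your strategy is the same as the paper's: pass to the shuffle level, describe $U_\calO(A)$ by left-comb normal monomials whose hanging leaves are decorated by a basis of $A=H\oplus H'$, take as candidate free generators the combs having an $H'$-decorated sibling of the open slot, and factor every normal monomial as such a generator composed with an element of $U_\calO(H)$. The gap is exactly the step you flag at the end: the claim that for $T\in\calB$ and a basis element $S$ of $U_{\calO^f}(H)$ the product $T\cdot S$ expands into decorated left combs all having the same ``top part'' $T$. You assert this via an unexecuted case analysis, and as stated it is not credible: any occurrence of a leading term that straddles the grafting point necessarily spans a run of spine vertices reaching from inside $T$ past the vertex adjacent to $T$'s open slot into $S$, and that run \emph{contains} the vertex of $T$ carrying its distinguished $H'$-decorated leaf. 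The Gr\"obner rewriting therefore acts on precisely the levels you need to leave untouched, and the lower-order monomials of a Gr\"obner basis element may redistribute vertex and leaf decorations among those levels arbitrarily; nothing in (M2) constrains the lower terms. So the heart of the lemma is asserted rather than proved.

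The source of the difficulty is your ordering convention: you declare the basis of $H$ smaller than that of $H'$, whereas the paper does the opposite (in its two-sorted species the $H'$-sort is smaller than the $H$-sort). With $H'$-labels smaller than all $H$-labels, the problem evaporates: in a straddling divisor of $T\cdot S$, the minimal leaf is the spine edge at the divisor's top vertex, which lies in $S$, so all siblings of the minimal leaf are $H$-decorated; meanwhile the divisor contains, at a strictly lower spine vertex, the $H'$-label distinguished by $T\in\calB$, which is smaller than every $H$-label and hence is (or is beaten only by another $H'$-label that is also) the second-smallest leaf of the divisor without being a sibling of its minimal leaf. By (M2) no leading term has this shape, so $T\cdot S$ is already a normal monomial, no rewriting ever occurs, and both the factorization and its injectivity --- hence freeness --- are immediate. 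Reversing your order of $H$ and $H'$ and deleting the case analysis turns your argument into the paper's proof. (Two minor points: for generators of arity greater than two the open slot may have several siblings, so the condition defining $\calB$ should read ``at least one sibling is $H'$-decorated''; and correspondingly a level is absorbed into $S$ only if \emph{all} of its hanging leaves lie in $H$.)
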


\begin{proof}
According to Lemma \ref{lm:partialfree}, $\partial(\calO)\cong\calY\circ\calO$ for some linear species $\calY$. To comprehend universal multiplicative envelopes for algebras with zero operations functorially, it is convenient to think of such an algebra $A$ as $\mathbbold{1}(A)$, where $\mathbbold{1}$ is given a left $\calO$-module structure via the augmentation map. This way, 
 \[
\partial(\calO)\circ_\calO A=\partial(\calO)\circ_\calO (\mathbbold{1}\circ A)\cong (\partial(\calO)\circ_\calO \mathbbold{1})\circ A,
 \]
and we see that the suitable formula for the species $\calY$ is \[\calY:=\partial(\calO)\circ_\calO\mathbbold{1}; \] this way, $\calY(A)$ literally corresponds to the universal envelope of the algebra with zero operations. The advantage of this viewpoint is that $\calY$ is, by construction, a twisted associative algebra, and the associative algebra structure of $U_\calO(A)\cong\calY(A)$ is induced from that twisted associative algebra structure. 

From the formula $\calY=\partial(\calO)\circ_\calO\mathbbold{1}$ it follows that any right action of a nontrivial structure operation vanishes, so as a twisted associative algebra, $\calY$ is generated by $\partial(\calX)$, where $\calX$ is the species of generators of $\calO$, and therefore as a shuffle algebra, $\calY^f$ is generated by $\partial(\calX)^f$. Let us describe a Gr\"obner basis of relations for this shuffle algebra. To the Gr\"obner basis $G_l$, we may associate the subset $\bar{G}_l$ of the free shuffle algebra generated by $\partial(\calX)^f$ consisting of elements obtained from elements of $G_l$ by deleting all monomials that are not left combs. It is clear that $\bar{G}_l$ consists of relations of $\calY^f$, since the deleted monomials vanish in $\partial(\calO)\circ_\calO\mathbbold{1}$. Moreover, according to our assumption about the leading terms of $G_l$, the elements of $\bar{G}_l$ have the same leading terms for the appropriate graded lexicographic order of monomials in the free shuffle algebra.
This immediately implies that $\bar{G}_l$ forms a Gr\"obner basis: the cosets of elements from $\partial^\Sha(\calN^{(0)}_l)$ form a basis of $\calY^f$, and these are precisely the normal forms with respect to $\bar{G}_l$. 

The last step of the proof requires to slightly extend the combinatorics with which we work. We shall need the language of two-sorted linear species. Informally, a two-sorted linear species is a canonical rule to associate a vector space to each pair of finite sets, see \cite{MR1629341} for details. In our case, our goal is to prove that for an algebra $A$ with zero operations and its subspace $H$, viewed as a subalgebra with zero operations, $U_\calO(A)$ is free as a $U_\calO(H)$-module. We write $A=H\oplus H'$ for some subspace $H'$, and we wish to make this splitting propagate in a certain way to universal enveloping algebras. The universal enveloping algebra $U_\calO(A)$ can be calculated as $\calY(A)$, and now we shall use two-sorted species to distinguish between elements coming from $H$ and from $H'$. 

To be precise, we consider the two-sorted species $\calY^{(2)}$ and $\calX^{(2)}$ defined as 
 \[
\calY^{(2)}(I,J)=\calY(\{\star\}\sqcup I\sqcup J),
\quad \calX^{(2)}(I,J)=\calX(\{\star\}\sqcup I\sqcup J)
 \]  
The meaning of these species is as follows. The species $\calY^{(2)}$ is a functorial version of the universal multiplicative envelope $U_\calO(A)$ when written as $U_\calO(H\oplus H')$. The species $\calX^{(2)}$ is the functorial species of generators of that algebra: we consider structure operations of our algebras for which we have a special input that will be used to define the twisted associative algebra structure, some inputs of the first type (where we shall later substitute elements of $H$), and some inputs of the second type (where we shall later substitute elements of $H'$). We have $\calX^{(2)}=\calX^{(2)}_0\oplus\calX^{(2)}_1$, where 
 \[
\calX^{(2)}_0(I,J)=
\begin{cases}
\calX^{(2)}(I,J),\,\,\, J=\varnothing,\\
\quad\quad 0,\quad\quad  J\ne\varnothing,
\end{cases}
\qquad
\calX^{(2)}_1(I,J)=
\begin{cases}
\quad\quad 0,\quad\quad J=\varnothing,\\
\calX^{(2)}(I,J),\,\,\, J\ne\varnothing.
\end{cases}
 \]
In plain words, $\calX^{(2)}_0$ will later correspond to the situation where all elements we use are elements of $H$, and $\calX^{(2)}_1(I,J)$ will later correspond to the situation where we use at least one element of $H'$. The subalgebra $U_\calO(H)$ of $U_\calO(A)$ corresponds to the subalgebra $\calY_0^{(2)}$ of $\calY^{(2)}$ generated by $\calX^{(2)}_0$. Thus, we see that it is sufficient to prove that the twisted associative algebra $\calY^{(2)}$ is free as a right $\calY_0^{(2)}$-module: the species of generators of that module, once evaluated on $(H,H')$, will give the free generators of $U_\calO(A)$ as a right $U_\calO(H)$-module. 

We shall consider the two-sorted species in the shuffle context as follows: we consider pairs of ordered sets $(I,J)$ as totally ordered sets for which all elements of $J$ are smaller than all elements of $I$. Let us examine closely a normal form from $\calN^{(0)}_l(I,J)$. Such normal forms, which are, by definition, left combs, come in two types: those normal forms for which all siblings of the minimal leaf belong to $I$, and those normal forms which have a leaf from $J$ among the siblings of the minimal leaf. We claim that the normal forms of the second kind are free generators of $\calY^{(2)}$ as a right $\calY_0^{(2)}$-module. Indeed, every normal form can be represented as a shuffle product of a normal form of the second kind we just described with several generators from $\calX^{(2)}_0$, so those normal forms generate $\calY^{(2)}$ as a right $\calY_0^{(2)}$-module. Freeness follows from our assumption on the leading terms: a shuffle product of a normal form from $\calY_0^{(2)}$ and a normal form of the second kind cannot be divisible by a leading term of the Gr\"obner basis.  
\end{proof}

To establish the Nielsen--Schreier property, we have to prove freeness of arbitrary subalgebras of free algebras. We shall now show that this would follow from freeness of a very particular system of subalgebras. Our approach is a generalization of the trick of Shirshov used in the case of Lie algebras \cite{MR0059892}. Namely, let us consider the free $\mathfrak{M}$-algebra $F_n$ with generators $y,x_1,\ldots,x_n$. This algebra admits an obvious homomorphism to the one-dimensional vector space spanned by $y$, viewed as an $\mathfrak{M}$-algebra with zero structure operations. Let us denote by $H_n$ the kernel of that homomorphism, viewed as an $\mathfrak{M}$-subalgebra of the free algebra$F_n$. 

\begin{lemma}\label{lm:shirshov-trick}
If the $\mathfrak{M}$-algebra $H_n$ is free for each $n\ge 0$, the variety $\mathfrak{M}$ has the Nielsen--Schreier property. 
\end{lemma}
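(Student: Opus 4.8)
The plan is to reduce the freeness of an arbitrary subalgebra $B$ of an arbitrary free $\mathfrak{M}$-algebra $F_{\mathfrak{M}}\langle X\rangle$ to the freeness of the specific subalgebras $H_n$. The key idea, following Shirshov, is that a subalgebra, together with a chosen generating set of the ambient free algebra, can be ``coordinatized'' so that the ambient algebra looks like a free algebra over the subalgebra with one extra variable for each ambient generator. More precisely, given $B\subseteq F_{\mathfrak{M}}\langle X\rangle$, I would first reduce to the case where $B$ is a homogeneous subalgebra (with respect to the grading in which the elements of $X$ have degree one): by the standard highest-degree-component argument recalled in the introduction (an irreducible system of generators of $B$ produces a homogeneous subalgebra with ``the same'' freeness behaviour, since freeness of a subalgebra is equivalent to algebraic independence of an irreducible generating set, and passing to leading components preserves irreducibility and algebraic independence), it suffices to treat homogeneous $B$. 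One may also assume $X$ is finite at each stage by a direct-limit argument, since freeness is preserved under filtered colimits of the relevant diagrams of free algebras.

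Next, for finite $X=\{x_1,\dots,x_n\}$ (say), I would invoke a pullback/comparison with $H_n$. Consider the free algebra $F_n=F_{\mathfrak{M}}\langle y,x_1,\dots,x_n\rangle$ and the augmentation $F_n\to \k y$ killing all $x_i$ and all products; its kernel is $H_n$. The point is that $F_{\mathfrak{M}}\langle x_1,\dots,x_n\rangle$ sits inside $H_n$ (it is exactly the subalgebra of $H_n$ of elements not involving $y$; more precisely $H_n$ contains the ideal-like piece generated by the $x_i$, but one has a retraction). A subalgebra $B$ of $F_{\mathfrak{M}}\langle x_1,\dots,x_n\rangle$ can then be realized inside $H_n$ by choosing a linear complement to $B$ among the generators: after a (possibly infinite, handled by colimits) change of generating set, one arranges that $B$ is generated by part of a ``free generating system'' of $F_{\mathfrak{M}}\langle x_1,\dots,x_n\rangle$ relative to which the remaining generators play the role of $y$. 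The upshot is a homomorphism $\varphi$ from some $H_m$ onto $F_{\mathfrak{M}}\langle x_1,\dots,x_n\rangle$, or rather a realization of $B$ as a retract/subalgebra-image fitting into the $H_m$ picture, so that freeness of $H_m$ forces freeness of $B$. Concretely: if every $H_m$ is free, then every subalgebra of $H_m$ which is a retract (or which arises as the $B$-part of such a coordinatization) is free, because a homogeneous subalgebra of a free algebra is free as soon as the ambient $H_m$ is — but that is circular, so instead one argues that freeness of $H_m$ gives, via the homological/combinatorial machinery, that the relevant universal enveloping algebras and modules of Kähler differentials behave correctly, and then re-runs Proposition~\ref{prop:free-alg-homol} for $B$ directly.

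Let me state the reduction more carefully, since that is where the work lies. Given homogeneous $B\subseteq F_{\mathfrak{M}}\langle X\rangle$ with $X$ finite, pick a homogeneous vector-space complement so that $F_{\mathfrak{M}}\langle X\rangle$, as an $\mathfrak{M}$-algebra, receives a surjection from $F_{\mathfrak{M}}\langle Z\sqcup Y\rangle$ where $Z$ maps onto a generating set of $B$ and $Y$ onto a set of additional generators needed to generate all of $F_{\mathfrak{M}}\langle X\rangle$; collapsing $Y$ and all products of the $Y$'s (but keeping $Z$) is modelled by the augmentation $F_{\mathfrak{M}}\langle Z\sqcup Y\rangle\to F_{\mathfrak{M}}\langle Z\rangle$, and the relevant kernel-type object is exactly an $H_{|Y|}$-over-$F_{\mathfrak{M}}\langle Z\rangle$ situation. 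The precise claim I would prove is: \emph{$B$ is a direct summand, as an $\mathfrak{M}$-subalgebra after suitable coordinate change, of some $H_m$; hence if $H_m$ is free and a homogeneous subalgebra-retract of a free algebra is free, $B$ is free.} The last clause is not automatic and is precisely why one needs the full strength of the preceding lemmas — one does not get "retract of free is free" for free in an arbitrary variety. So the honest argument is: freeness of $H_m$, via Proposition~\ref{prop:free-alg-homol}, is equivalent to vanishing of $H_k^{\calO}(H_m,F)$ for all $k>0$ and all coefficient modules $F$; using the change-of-rings and the explicit form of $U_{\calO}$ and $\Omega^1$ for $H_m$ (which we control by Lemmas~\ref{lm:freepartial}--\ref{lm:freemodule}), one transfers this vanishing to $B$ and concludes $B$ is free by the same proposition.

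\medskip

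The main obstacle I anticipate is the coordinatization step: making precise, in an arbitrary variety without any a priori normal form in the free algebra, that a homogeneous subalgebra $B$ of $F_{\mathfrak{M}}\langle X\rangle$ can be slotted into the $H_m$ picture so that freeness transfers. In the Lie case Shirshov's trick uses the structure of free Lie algebras quite explicitly; here the substitute must be purely formal, relying only on (i) the grading, (ii) the universal property of free algebras, and (iii) the homological criterion. Getting the bookkeeping right — especially handling the case of infinitely many generators via a colimit of the finitary statements, and making sure the "complement" chosen is compatible with the grading so that the $H_m$-style augmentation is well-defined — will require care, but no new ideas beyond what is already assembled: Lemmas~\ref{lm:freepartial}, \ref{lm:partialfree}, \ref{lm:kahlerfree}, \ref{lm:freemodule}, together with Propositions~\ref{prop:Homol} and \ref{prop:free-alg-homol}, should suffice once the reduction to $H_n$ is in place, which is exactly the content of this lemma.
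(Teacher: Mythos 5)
Your proposal does not actually prove the lemma: the entire content of the statement is the reduction of an arbitrary subalgebra to the specific subalgebras $H_n$, and that is exactly the step you defer (``will require care, but no new ideas''). The missing idea is an induction, not a coordinatization. The paper's argument runs as follows: reduce to a subalgebra of $F=F_\mathfrak{M}\langle x_1,\ldots,x_k\rangle$ generated by a finite irreducible system $h_1,\ldots,h_q$ (freeness of all subalgebras being equivalent to algebraic independence of all irreducible systems), and perform Noetherian induction on the degree vector $(\deg h_1,\ldots,\deg h_q)$ with the componentwise partial order. If all degrees equal one, the $h_i$ are part of a free generating set and the claim is clear. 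Otherwise irreducibility forces some generator, say $x_k$, to be absent from the system; renaming $x_k$ as $y$, all the $h_i$ lie in $H_{k-1}$, which is free by hypothesis, and every $h_i$ genuinely involving $y$ has strictly smaller degree when rewritten in the free generators of $H_{k-1}$. The degree vector strictly decreases and the induction closes. No homology is used in this lemma at all.

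By contrast, the two transfer mechanisms you sketch both fail. The claim that $B$ is a retract or direct summand of some $H_m$ after a coordinate change is neither established nor what happens: the generators of $B$ merely \emph{lie in} $H_{k-1}$, which is again a free algebra, and one must iterate the descent rather than conclude in one step. And the suggestion to transfer the vanishing of $H^{\calO}_k(H_m,F)$ to an arbitrary subalgebra $B$ by change of rings is precisely what cannot be done directly --- if it could, this lemma would be superfluous, since one could run the spectral-sequence argument of Theorem~\ref{th:SchrComb} for any subalgebra of any free algebra. The point of the Shirshov trick is that the homological argument is carried out only for the distinguished pairs $(F_n,H_n)$, and every other subalgebra is reached by the elementary degree descent above. (Your preliminary reductions to homogeneous and finitely generated subalgebras are reasonable but are not where the difficulty lies; the paper instead reduces to irreducible generating systems, which is what makes the base case and the choice of an omitted generator work.)
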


\begin{proof}
Let $H$ be a subalgebra of the free $\mathfrak{M}$-algebra $F$ with $k$ generators. Without loss of generality, we may assume $H$ to be finitely generated by an irreducible system of elements $h_1,\ldots,h_q$. We shall prove our claim by Noetherian induction on $(\deg(h_1),\ldots,\deg(h_q))\in\mathbb{N}^q$, where we use the partial order of $\mathbb{N}^q$ for which $\mathbf{u}<\mathbf{v}$ if and only if $\mathbf{u}\ne\mathbf{v}$ and all coordinates of $\mathbf{v}-\mathbf{u}$ are nonnegative. The basis of induction is the case of all degrees equal to one, in which case we consider a subalgebra generated by several generators, and the claim is clear. Otherwise, the system of elements $h_1,\ldots,h_q$ cannot contain all generators of $F$; we may assume that it does not contain the generator $x_k$, but that $x_k$ nontrivially appears in some of the $\mathfrak{M}$-monomials used to define the elements $h_1,\ldots,h_q$, for otherwise we could find a counterexample in a smaller free algebra. If we denote $x_k$ by $y$, thus identifying our ambient free algebra $F$ with the free algebra generated by $y$, $x_1$,\ldots, $x_{k-1}$, we see that all elements $h_1,\ldots,h_q$ belong to $H_{k-1}$. By our assumption, this algebra is free; moreover, each element $h_i$ for which $y$ nontrivially appears in some of the $\mathfrak{M}$-monomials has smaller degree when expressed in terms of generators of $H_{k-1}$, so the induction hypothesis applies. 
\end{proof}

To conclude the proof of our theorem, we argue as follows. Each algebra $H_n$ is positively graded, so according to Proposition \ref{prop:free-alg-homol}, in order to establish its freeness it is enough to show that its cohomology with trivial coefficients vanishes in positive degrees. For that, we shall use the general result of Fresse \cite[Th.~17.3.4]{MR2494775} that gives a criterion for the isomorphism
 \[
H^{\calO}_\bullet(A,F)\cong{\mathrm{Tor}}_\bullet{U_{\calO}(A)}(F,\Omega^1_A)
 \]
to hold. 
(The question of identifying varieties of algebras for which the latter assertion is true is particularly meaningful in the context of Ginzburg's ``geometry over an operad'' \cite{MR1839485,Gin}.) This criterion requires that the operad $\calO$ is $\Sigma_*$-cofibrant (which is automatic over a field of zero characteristic) and that $\partial(\calO)$ and $\Omega^1_{\calO}$ are cofibrant as right $\calO$-modules, so Lemmas \ref{lm:partialfree} and \ref{lm:kahlerfree} show that it applies in our case (free right modules over an operad with zero differential are cofibrant), so we may compute the cohomology via the $\mathrm{Ext}$ functors over universal multiplicative enveloping algebras. 

Next, we note that, according to Lemma \ref{lm:freemodule}, for any $\calO$-algebra $A$ with zero operations and any subalgebra $H\subset A$, the universal enveloping algebra $U_\calO(A)$ is free as a $U_\calO(H)$-module. Because of the PBW property guaranteed by Lemma \ref{lm:partialfree}, the same is actually true for any algebra $A$ and its subalgebra $H$: we impose the usual filtration on $U_\calO(A)$ and take the associated graded algebra, then the freeness holds after taking the associated graded algebras, and we may lift the free generators to the original algebra. Thus, in particular, $U_\mathfrak{M}(F_n)$ is a free right $U_\mathfrak{M}(H_n)$-module. 

Let use base change theorem for Tor \cite[Th.~10.73]{MR2455920}: if $\phi\colon R\to S$ is a ring homomorphism, $M$ is a right $T$-module, and $B$ is a left $R$-module, then there is a spectral sequence 
 \[
E^2_{p,q}=\mathrm{Tor}^T_p(M,\mathrm{Tor}^R_q(T,B)) \Rightarrow \mathrm{Tor}_n^R(M,B). 
 \]
We shall apply this to $R=U_\mathfrak{M}(H_n)$, $S=U_\mathfrak{M}(F_n)$, $M=\k$ and $B=\Omega^1_{H_n}$. Since the trivial left $U_\mathfrak{M}(H_n)$-module is a restriction of the trivial left $U_\mathfrak{M}(F_n)$-module, this spectral sequence converges precisely to $H_n^\calO(H_n,\k)$. Let us note that since in our case $T$ is a free right $R$-module, we have 
 \[
\mathrm{Tor}^R_\bullet(T,B)=\mathrm{Tor}^R_0(T,B)\cong T\otimes_R B=U_\mathfrak{M}(F_n)\otimes_{U_\mathfrak{M}(H_n)}\Omega^1_{H_n} 
 \]
which is a left $U_\mathfrak{M}(F_n)$-submodule of the free left module $\Omega^1_{F_n}$. Submodules of free one-sided modules over associative algebras are free \cite{MR800091}, so 
 \[
\mathrm{Tor}^T_\bullet(M,T\otimes_R B)=\mathrm{Tor}^T_0(M,T\otimes_R B),
 \]
which implies that our spectral sequence collapses on the second page and the homology $H_\bullet^{\calO}(H_n,\k)$ vanishes in positive degrees, and therefore $H_n$ is free. According to Lemma \ref{lm:shirshov-trick}, the variety $\mathfrak{M}$ has the Nielsen--Schreier property.
\end{proof}

Let us record an immediate corollary of our result, which shows that an operad defining a Nielsen--Schreier variety of algebras cannot be ``small''. 

\begin{corollary}
Suppose that the variety of algebras encoded by the given operad $\calO$ is Nielsen--Schreier. If $\calX$ is the linear species of generators of the operad $\calO$, then for the exponential generating series of dimensions of species we have a coefficient-wise inequality 
 \[
f_{\calO}(t)\ge \int_0^t \frac{dt}{1-f'_{\calX}(t)}.
 \]
In particular, if $\calO$ is generated by $k$ linearly independent generators of arity $2$, we have $\dim\calO(n)\ge k^{n-1}(n-1)!$.   
\end{corollary}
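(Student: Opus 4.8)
The plan is to extract the estimate from the weakest consequence of the Nielsen--Schreier property that is easy to control, namely condition~(1) of Umirbaev's criterion recalled in the introduction: for every free $\calO$-algebra $A=\calO(V)$ the universal multiplicative enveloping algebra $U_\calO(A)$ is a free associative algebra. Everything else is a translation into the language of linear species and generating functions.

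First I would use the isomorphism $U_\calO(\calO(V))\cong\partial(\calO)(V)$ from Section~\ref{sec:deriv}, under which the associative product on the left is induced by the twisted associative algebra structure on $\partial(\calO)$. Since $U_\calO(\calO(V))$ is free associative for \emph{every} $V$ and we work in characteristic zero, the homological criterion of freeness --- of the same nature as Proposition~\ref{prop:Homol}, applied to the associative operad, whose algebras in linear species are exactly the twisted associative algebras; freeness of a connected graded associative algebra is equivalent to vanishing of $\mathrm{Tor}$ in homological degrees $\ge 2$, a condition that the exact, monoidal Schur-functor evaluation $\calS\mapsto\calS(V)$ both preserves and reflects --- shows that $\partial(\calO)$ is itself free as a twisted associative algebra.

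Next I would identify a free sub-twisted-associative-algebra of $\partial(\calO)$ whose dimensions are computable. Let $\calX$ be the linear species of generators of $\calO$, that is, its indecomposables. A nonzero generator of $\calO$ with one input marked cannot be an operadic composition of two operations of arity $\ge 2$; but the product on $\partial(\calO)$ is precisely composition at the marked input, so the subspecies $\partial(\calX)\subset\partial(\calO)$ meets the decomposables of $\partial(\calO)$ trivially and thus injects into the space of indecomposables of $\partial(\calO)$. As $\partial(\calO)$ is free, it is free on its indecomposables, so the free twisted associative algebra on the reduced species $\partial(\calX)$ --- which as a species is $\bigoplus_{n\ge 0}\partial(\calX)^{\cdot n}$ --- is a sub-twisted-associative-algebra of $\partial(\calO)$; in particular $\dim\partial(\calO)(m)\ge\sum_{n\ge 0}\dim\partial(\calX)^{\cdot n}(m)$ for every $m$.

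Finally I would read this off in terms of exponential generating series, using the two standard facts $f_{\partial(\calS)}(t)=f_\calS'(t)$ and that the free twisted associative algebra on a reduced species $\calY$ has exponential generating series $\tfrac{1}{1-f_\calY(t)}$ (the Cauchy product becomes the product of series). The dimension inequality above becomes $f_\calO'(t)\ge\tfrac{1}{1-f_\calX'(t)}$ coefficient by coefficient; since integration from $0$ preserves coefficient-wise inequalities and $f_\calO(0)=0$, this integrates to the asserted inequality $f_\calO(t)\ge\int_0^t\tfrac{dt}{1-f_\calX'(t)}$. For the last assertion one takes $\calX$ concentrated in arity two with $\dim\calX(2)=k$, so that $f_\calX'(t)=kt$ and $\int_0^t\tfrac{dt}{1-kt}=\sum_{n\ge1}\tfrac{k^{n-1}}{n}t^n$, and compares coefficients of $t^n$. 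The one delicate point is the very first reduction, from freeness of all the algebras $U_\calO(\calO(V))$ to freeness of the single species-level object $\partial(\calO)$ as a twisted associative algebra; the rest is bookkeeping.
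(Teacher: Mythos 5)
Your proof is correct and follows essentially the same route as the paper's: both reduce the statement to the freeness of $\partial(\calO)$ as a twisted associative algebra together with the observation that $\partial(\calX)$ generates a free twisted associative subalgebra, and then translate this into exponential generating series via $f_{\partial(\calS)}(t)=f'_{\calS}(t)$ and the $\frac{1}{1-t}$ formula for free twisted associative algebras. If anything, your derivation of the freeness of $\partial(\calO)$ from the Nielsen--Schreier hypothesis --- via condition (1) of the criterion recalled in the introduction and the transfer of bar-homology vanishing across the Schur-functor evaluation --- is more explicit than the paper's, which simply invokes Lemma~\ref{lm:freepartial} even though that lemma's stated hypothesis is Property~(M1) rather than the Nielsen--Schreier property itself.
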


\begin{proof}
It follows from Lemma \ref{lm:freepartial} that $\partial(\calO)$ is a free twisted associative algebra, and that $\partial(\calX)$ generates a free twisted associative subalgebra. It is well known \cite{MR1629341} that on the level of exponential generating functions, $\partial$ corresponds to derivative, and forming the free twisted associative subalgebra corresponds to composition with $\frac{1}{1-t}$, therefore we have a coefficient-wise inequality 
 \[
f'_{\calO}(t)\ge \frac{1}{1-f'_{\calX}(t)},
 \]
which is equivalent to the inequality we aim to prove. Additionally, if $\calO$ is generated by $k$ linearly independent generators of arity $2$, we have $f_{\calX}(t)=k\frac{t^2}{2}$, so $f'_{\calX}(t)=kt$, and 
 \[
\int_0^t\frac{dt}{1-f'_{\calX}(t)}=
\int_0^t\frac{dt}{1-kt}=-\frac{\log(1-kt)}{k},
 \]
and the claimed inequality follows.  
\end{proof}

We conjecture that the property (M1) alone is enough to ensure the Nielsen--Schreier property. Equivalently, we propose the following conjecture.

\begin{conjecture}\label{conj:SchrComb}
For a variety of algebras $\mathfrak{M}$ over a field of zero characteristic, the following properties are equivalent:
\begin{itemize}
\item $\mathfrak{M}$ has the Nielsen--Schreier property,
\item for every free $\mathfrak{M}$-algebra $A$, its universal multiplicative enveloping algebra $U_\mathfrak{M}(A)$ is a free associative algebra,
\item for the reverse graded path-lexicographic ordering, each leading term of the reduced Gr\"obner basis of the corresponding shuffle operad $\calO^f$ has the minimal leaf directly connected to the root.
\end{itemize}
\end{conjecture}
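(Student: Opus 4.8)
The statement asserts the pairwise equivalence of three properties, and two of the six implications are already in hand: $(1)\Rightarrow(2)$ is immediate from the criterion of \cite{MR1302528,MR1399590} recalled in the introduction (``$U_\mathfrak{M}(A)$ is a free associative algebra'' is exactly the first of the two conditions there), and $(3)\Rightarrow(2)$ is Lemma~\ref{lm:freepartial}. So the plan is to establish $(2)\Rightarrow(3)$ and $(2)\Rightarrow(1)$; the latter is the crux.

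For $(2)\Rightarrow(3)$ I would argue by contraposition, working entirely in the shuffle world. By the homological criterion of freeness together with the isomorphism $\partial(\calO)^f\cong\partial^\Sha(\calO^f)$ of Proposition~\ref{prop:twoder}, condition $(2)$ is equivalent to freeness of the shuffle algebra $\partial^\Sha(\calO^f)$, which is generated by the ordered species $\calG$ of \emph{min-indecomposable} normal monomials of $\calO^f$ (those whose minimal leaf is attached directly to the root), exactly as in the proof of Lemma~\ref{lm:freepartial}. The combinatorial point is that a normal monomial whose minimal leaf is \emph{not} at the root may be cut along the topmost internal edge of the path from the minimal leaf to the root, exhibiting it as a shuffle product of two strictly smaller normal monomials, the upper factor being min-indecomposable; iterating, every normal monomial of $\calO^f$ is in a unique way a product of generators from $\calG$. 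Now suppose $(3)$ fails and the reduced Gröbner basis $G_r$ contains an element $g$ whose leading monomial $w$ has its minimal leaf below the root. Since $w$ is a leading term of the \emph{reduced} Gröbner basis, all its proper subtree-divisors are normal; hence the cutting procedure applies and writes $w$ as a product of generators from $\calG$, while the remaining (necessarily normal) monomials of $g$ express that same product of generators through other products of generators. To conclude that $\partial^\Sha(\calO^f)$ is not free one must see that this relation is not a formal consequence of relations of lower degree: I would do this by setting up the elementary Gröbner/diamond-lemma formalism for shuffle algebras, in the spirit of the operadic case and of \cite{MR4381941}, and checking that the reduced Gröbner basis of the defining ideal of $\partial^\Sha(\calO^f)$ over $\calG$ is in bijection with the set of elements of $G_r$ whose leading terms fail to be min-indecomposable; this basis is empty precisely under (M1). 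Transferring back along the forgetful functor then gives $(2)\Rightarrow(3)$.

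The implication $(2)\Rightarrow(1)$ is, I expect, considerably harder, and the most promising route is to revisit the proof of Theorem~\ref{th:SchrComb} and try to remove every use of Property (M2). Inspection of that proof shows that (M2) is invoked in exactly three places: Lemma~\ref{lm:partialfree} (freeness of $\partial(\calO)$ as a right $\calO$-module, i.e.\ the PBW property for universal multiplicative envelopes), Lemma~\ref{lm:kahlerfree} (freeness of $\Omega^1_\calO$, itself deduced from Lemma~\ref{lm:partialfree} together with (M1)), and Lemma~\ref{lm:freemodule} (freeness of $U_\calO(A)$ over $U_\calO(H)$ for algebras with zero operations); everything that follows — the Shirshov-type reduction of Lemma~\ref{lm:shirshov-trick}, the identification of $H^{\calO}_\bullet(H_n,\k)$ with a $\mathrm{Tor}$ over the universal envelope via \cite[Th.~17.3.4]{MR2494775}, and the base-change spectral sequence — uses only these three freeness facts together with the freeness of $U_\mathfrak{M}(A)$ as an associative algebra, which is exactly $(2)$. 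Thus it would suffice to prove the conclusions of Lemmas~\ref{lm:partialfree} and~\ref{lm:freemodule} assuming only (M1); equivalently: \emph{if $\partial(\calO)$ is a free twisted associative algebra, then it is free as a right $\calO$-module, and moreover, for algebras with zero operations, $U_\calO(A)$ is free as a $U_\calO(H)$-module.}

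Proving these two purely operadic statements is where I expect the genuine obstacle to lie, and it is the reason the assertion is stated only as a conjecture. In Lemma~\ref{lm:partialfree} the left-comb shape of leading terms was used precisely to guarantee that grafting normal monomials at a \emph{non-minimal} leaf preserves normality, which can genuinely fail under (M1) alone; any replacement argument would have to exploit the extra rigidity coming from the fact that on $\partial(\calO)$ the (now free) twisted associative product and the right $\calO$-action commute — for instance by comparing the minimal free resolution of $\partial(\calO)$ as a right $\calO$-module with its bar resolution as a twisted associative algebra and forcing the former to degenerate in positive homological degree. Likewise, Lemma~\ref{lm:freemodule} used the ``sibling of the minimal leaf'' half of (M2) to control which shuffle products of normal forms avoid the leading terms of the Gröbner basis, and one would need to recover freeness of $U_\calO(A)$ over $U_\calO(H)$ directly from $\partial(\calO)$ being a free twisted associative algebra, presumably by a two-sorted refinement of the freeness argument analogous to the one in the proof of Lemma~\ref{lm:freemodule} but not presupposing any particular normal-form shape. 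If both of these can be achieved, the conjecture follows; until then it remains open.
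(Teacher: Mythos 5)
The statement you were asked to prove is stated in the paper as Conjecture~\ref{conj:SchrComb}; the paper offers no proof of it, so there is no argument of the authors to compare yours against. Your write-up correctly treats it as such: it is an assessment of the state of the problem rather than a proof, and as an assessment it is largely accurate. The implications you list as known are indeed known: $(1)\Rightarrow(2)$ is half of the Umirbaev criterion quoted in the introduction, and $(3)\Rightarrow(2)$ is Lemma~\ref{lm:freepartial}. Your diagnosis of where Property (M2) enters the proof of Theorem~\ref{th:SchrComb} --- Lemmas~\ref{lm:partialfree}, \ref{lm:kahlerfree} and~\ref{lm:freemodule}, with everything downstream (the Shirshov-type reduction, Fresse's comparison theorem, the base-change spectral sequence) depending only on the freeness statements those lemmas output together with condition~$(2)$ --- is correct, and reducing $(2)\Rightarrow(1)$ to proving those freeness statements under (M1) alone, or under the intrinsic hypothesis that $\partial(\calO)$ is a free twisted associative algebra, is exactly the reduction one would attempt.

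Two caveats. First, your sketch of $(2)\Rightarrow(3)$ is not yet a proof. The claimed bijection between the reduced Gr\"obner basis of the defining ideal of $\partial^\Sha(\calO^f)$ over the min-indecomposable normal monomials and the elements of $G_r$ whose leading terms are not min-indecomposable is a substantive assertion requiring an argument: a min-indecomposable normal monomial need not be a \emph{minimal} generator of the shuffle algebra (it may become decomposable after reducing a non-normal product), and non-freeness must be detected intrinsically (say via $\mathrm{Tor}$ of the connected weight-graded algebra) rather than via the existence of relations among one particular generating set; one must also pass from non-freeness of the twisted associative algebra $\partial(\calO)$ to non-freeness of some actual $U_\mathfrak{M}(A)$, which uses exactness and faithfulness of Schur functor evaluation in characteristic zero on species of bounded arity. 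Since the authors state the full three-way equivalence as open, you should not present $(2)\Leftrightarrow(3)$ as settled. Second, and decisively, you do not close $(2)\Rightarrow(1)$ --- you identify the obstruction and stop --- so the net effect of your proposal is that the statement remains a conjecture, which is exactly its status in the paper.
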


\section{Pre-Lie algebras}\label{sec:examplesfirst}

 Recall that the variety of pre-Lie algebras \cite{MR1827084}, also known as right-symmetric algebras, is defined by the identity
 $(a_1a_2)a_3-a_1(a_2a_3)=(a_1a_3)a_2-a_1(a_3a_2)$.
Existing results about pre-Lie algebras suggest that this variety might be Nielsen--Schreier. For instance, according to a result of Kozybaev, Makar-Limanov and the second author \cite{MR2431177}, two-generated subalgebras of free pre-Lie algebras are free. Moreover, in the context of our general result, it is worth recalling the result of Kozybaev and the second author \cite{MR2078718} (see also \cite{DFC,MR4381941}) that the underlying vector space of the universal multiplicative enveloping algebra of a pre-Lie algebra $L$ is isomorphic to $T(L)\otimes S(L)$, meaning that a PBW type theorem holds for universal multiplicative envelopes. We shall now show how to use the operad theory approach in this case.  

Let us remark that in \cite{KozMult} it is claimed that the variety of right-symmetric algebras does not have the Nielsen--Schreier property. Unfortunately, there is an issue with the two main proofs of that paper that rely on highly intricate computations. We studied the arguments of \cite{KozMult} in detail, and we believe that we identified the problematic parts. First, the claimed polynomial relation between particular five elements of the free two-generated algebra does not hold (we checked this using the \texttt{albert} software for computations in nonassociative algebras \cite{albert}; an independent verification was performed by Frederic Chapoton using \texttt{SageMath} \cite{sagemath}). Second, the proof of non-freeness of the multiplicative universal envelope of the free one-generated right-symmetric algebra seems to start with a correct identity but then makes a claim on algebraic independence that is false. 

\begin{theorem}\label{th:preLie}
The variety of pre-Lie algebras has the Nielsen--Schreier property.
\end{theorem}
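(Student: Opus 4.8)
The plan is to verify that the operad $\mathrm{preLie}$ satisfies the two combinatorial conditions (M1) and (M2) of Theorem~\ref{th:SchrComb}, after which the Nielsen--Schreier property follows immediately. First I would recall the standard presentation of the shuffle operad $\mathrm{preLie}^f$: it is generated by a single binary operation with no symmetry, so one works with two generators $x_1=a_1a_2$ and $x_2=a_2a_1$ of $\mathrm{preLie}^f$ in arity two (the operation and its opposite), and the relations are the shuffle versions of the right-symmetry identity $(a_1a_2)a_3-a_1(a_2a_3)=(a_1a_3)a_2-a_1(a_3a_2)$. Spelling this out over the three shuffle permutations of $\{1,2,3\}$ gives the defining relations, each of which is a combination of left combs and "fork" (right comb) trees with two ternary-vertex arrangements.

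The key computational step is to determine the reduced Gr\"obner basis of $\mathrm{preLie}^f$ for the reverse graded path-lexicographic ordering and check condition (M1), and then to exhibit \emph{some} admissible ordering for which the reduced Gr\"obner basis consists of left combs whose two smallest leaves are siblings, giving (M2). For pre-Lie algebras this Gr\"obner basis is in fact classical: with a suitable choice of ordering one recovers the well-known fact that $\mathrm{preLie}$ is a Koszul quadratic operad whose reduced Gr\"obner basis consists of quadratic elements, and the leading terms are precisely the "left-comb" monomials $(a_1 a_i) a_j$ with the appropriate inequalities on $i,j$. One checks directly that for the reverse graded path-lexicographic order the leading monomial of each relation has its minimal leaf ($1$) attached to the root vertex --- this is exactly the shape of a monomial $(a_1 \star)\star$ --- so (M1) holds; and that for the graded path-lexicographic order each leading term is the left comb $((a_1 a_i)a_j)$ in which the leaf $1$ and its sibling (the second-smallest leaf of that comb) sit at the top vertex, giving the sibling condition in (M2). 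Since the defining relations are already quadratic and, for the relevant orderings, their leading terms do not overlap in a way that produces new elements (the single self-overlap S-polynomial reduces to zero, just as in the Lie case treated in the excerpt), the reduced Gr\"obner basis equals the set of relations themselves, and both conditions are verified on that finite explicit list.

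The main obstacle I anticipate is bookkeeping rather than conceptual: one must be careful that the \emph{same} notion of "leading term shape" is being checked against two \emph{different} orderings --- the reverse graded path-lexicographic order for (M1) and a graded path-lexicographic order for (M2) --- and that in each case one genuinely works with the \emph{reduced} Gr\"obner basis, i.e.\ that no further elements appear from S-polynomials. Because $\mathrm{preLie}$ has a two-dimensional space of binary generators and a four-dimensional space of ternary relations (the $S_3$-module of relations decomposes so that the shuffle relations number three), there are a handful of S-polynomials to inspect; the expectation, consistent with the known Koszulness of $\mathrm{preLie}$, is that all of them reduce to zero, so the verification is finite and short. Once (M1) and (M2) are in hand, Theorem~\ref{th:SchrComb} applies verbatim and yields that the variety of pre-Lie algebras is Nielsen--Schreier; as a sanity check this is consistent with the PBW isomorphism $U(L)\cong T(L)\otimes S(L)$ of \cite{MR2078718} and with the freeness of two-generated subalgebras established in \cite{MR2431177}.
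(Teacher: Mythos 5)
Your proposal follows essentially the same route as the paper: pass to the shuffle operad on the two generators $a_1a_2$ and $a_2a_1$, write out the three shuffle relations, and verify (M1) and (M2) of Theorem~\ref{th:SchrComb} by exhibiting the leading terms for two orderings. The one substantive difference is how you certify that the defining relations already form a reduced Gr\"obner basis. You propose to reduce S-polynomials directly; be aware that with three quadratic relations and two generators there are several small common multiples of the leading terms to inspect, not ``the single self-overlap'' you mention (your later remark about ``a handful of S-polynomials'' is the accurate count). The paper avoids this computation entirely by a dimension count: for a quadratic shuffle operad, the monomials all of whose quadratic divisors are leading terms span the Koszul dual, so the relations form a quadratic Gr\"obner basis exactly when the number of such monomials in arity $n$ equals $\dim{\mathop{\mathrm{Perm}}}(n)=n$; both leading-term configurations give exactly $n$ such monomials. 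This is cleaner and worth adopting. Two smaller points: for (M2) the paper does not use the plain graded path-lexicographic ordering but a superposition of the reversed lexicographic ordering on leaf permutations with the graded lexicographic ordering on path sequences (with the opposite generator order); since (M2) only asks for the existence of \emph{some} admissible ordering this is not a gap, but you should fix a specific ordering and confirm that it really selects the three left combs with leaves $1,2$ at the top vertex as leading terms. With those details filled in, your argument is correct and yields the theorem.
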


\begin{proof}
Let us consider the operations $\alpha(a_1,a_2)=a_1a_2$ and $\beta(a_1,a_2)=a_2a_1$ which generate the operad of right-symmetric algebras as a shuffle operad. In terms of these operations, the linear basis of the $S_3$-module of consequences of the right-symmetric identity is given by the elements 
\begin{gather*}
\lbincomb{\alpha}{\alpha}{1}{2}{3}-\rbincomb{\alpha}{\alpha}{1}{2}{3}-\lbincomb{\alpha}{\alpha}{1}{3}{2}+\rbincomb{\alpha}{\beta}{1}{2}{3},\\
\lbincomb{\alpha}{\beta}{1}{2}{3}-\lbincomb{\beta}{\alpha}{1}{3}{2}-\rbincomb{\beta}{\alpha}{1}{2}{3}+\lbincomb{\beta}{\beta}{1}{3}{2},\\
\lbincomb{\alpha}{\beta}{1}{3}{2}-\lbincomb{\beta}{\alpha}{1}{2}{3}-\rbincomb{\beta}{\beta}{1}{2}{3}+\lbincomb{\beta}{\beta}{1}{2}{3}
\end{gather*}
corresponding to the more convenional expressions
\begin{gather*}
(a_1a_2)a_3-a_1(a_2a_3)=(a_1a_3)a_2-a_1(a_3a_2),\\
(a_2a_1)a_3-a_2(a_1a_3)=(a_2a_3)a_1-a_2(a_3a_1),\\
(a_3a_1)a_2-a_3(a_1a_2)=(a_3a_2)a_1-a_3(a_2a_1).
\end{gather*}
For the reverse graded path-lexicographic ordering corresponding to the ordering $\alpha>\beta$ of generators, the defining relations form a Gr\"obner basis with the leading terms
 $$
\rbincomb{\alpha}{\alpha}{1}{2}{3}, \rbincomb{\beta}{\alpha}{1}{2}{3}, \text{ and } \rbincomb{\beta}{\beta}{1}{2}{3}
 $$
satisfying the first combinatorial condition of Theorem \ref{th:SchrComb}. For the ordering obtained by superposition of the reversed lexicographic ordering on permutations of leaves and the graded lexicographic ordering on path sequences  corresponding to the ordering $\beta>\alpha$ of generators, the defining relations form a Gr\"obner basis with the leading terms
 $$
\lbincomb{\alpha}{\alpha}{1}{2}{3}, \lbincomb{\alpha}{\beta}{1}{2}{3}, \text{ and } \lbincomb{\beta}{\beta}{1}{2}{3}
 $$
satisfying the second combinatorial condition of Theorem \ref{th:SchrComb}. Both of these statements are easy to verify using the following observation outlined in \cite[Corollary 1]{MR3203367}. For a shuffle operad with quadratic relations $G$, the shuffle tree monomials for which each quadratic divisor is a leading term of an element of $G$ span the Koszul dual operad, and $G$ is a quadratic Gr\"obner basis if and only if the number of such shuffle tree monomials with $n$ leaves is equal to the dimension of the arity $n$ component of the Koszul dual operad. Indeed, in the first case, the corresponding shuffle tree monomials are right combs for which the path sequence of the maximal leaf is $\beta^i\alpha^{n-i-1}$ for some $0\le i\le n-1$, and in the second case the corresponding shuffle tree monomials are left combs with the leaves labelled $1$,\ldots, $n$ in the planar order, and the path sequence of the minimal leaf equal to $\alpha^i\beta^{n-i-1}$ for some $0\le i\le n-1$. Thus, in both cases, the cardinality of the spanning set for the arity $n$ component of the Koszul dual operad is equal to $n$. The Koszul dual operad of the operad of pre-Lie algebras is the operad $\mathop{\mathrm{Perm}}$ of permutative algebras, and it is well known that $\dim\mathop{\mathrm{Perm}}(n)=n$, so our relations form a Gr\"obner basis in both cases. Examining these Gr\"obner bases, we see that Theorem \ref{th:SchrComb} applies.
\end{proof}

In \cite{MR2431177}, it is shown that automorphisms of two-generated free pre-Lie algebras are tame. Theorem \ref{th:preLie}, when combined with Theorem \ref{th:tame}, immediately implies the following generalization. 

\begin{corollary}
Automorphisms of finitely generated free pre-Lie algebras are tame. 
\end{corollary}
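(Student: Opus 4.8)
The plan is to obtain this statement as an immediate consequence of two results already established. The first is Theorem~\ref{th:preLie}, which asserts that the variety $\mathfrak{M}$ of pre-Lie algebras is Nielsen--Schreier. The second is Theorem~\ref{th:tame}, quoted from \cite{MR224663}, which for \emph{any} Nielsen--Schreier variety $\mathfrak{M}$ exhibits the automorphism group of a finitely generated free $\mathfrak{M}$-algebra $A=F_\mathfrak{M}\langle x_1,\ldots,x_n\rangle$ as generated by the permutations of $x_1,\ldots,x_n$ together with the elementary automorphisms that fix all generators but one and act by $x_1\mapsto\alpha x_1+w(x_2,\ldots,x_n)$ with $\alpha\in\k^\times$ and $w$ an arbitrary pre-Lie polynomial in the remaining generators.

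First I would invoke Theorem~\ref{th:preLie} to check that the hypothesis of Theorem~\ref{th:tame} is satisfied when $\mathfrak{M}$ is the variety of pre-Lie algebras. Then, applying Theorem~\ref{th:tame} verbatim, I conclude that for every finite $n$ the automorphism group of $F_\mathfrak{M}\langle x_1,\ldots,x_n\rangle$ is generated by precisely the maps listed above, which are by definition the tame automorphisms. Hence every automorphism of a finitely generated free pre-Lie algebra is tame.

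There is no genuine obstacle in this argument: all of the substantive work sits in Theorem~\ref{th:preLie}, while Theorem~\ref{th:tame} is applied as a black box. It is worth remarking in the write-up that this generalizes the main result of \cite{MR2431177}, where tameness of automorphisms of free pre-Lie algebras was proved only in the two-generated case by an involved direct computation, and that it stands in sharp contrast to the situation for polynomial rings, where wild automorphisms are known to exist. If one wished to make the statement self-contained one could additionally recall the mechanism behind Theorem~\ref{th:tame} — the Nielsen--Schreier property allows one to run a Nielsen-type reduction on a generating set of a given automorphism, strictly decreasing the tuple of degrees until a linear automorphism is reached — but since that theorem is already available off the shelf, no such elaboration is required.
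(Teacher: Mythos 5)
Your proposal is correct and is exactly the paper's argument: the corollary is obtained by combining Theorem~\ref{th:preLie} (pre-Lie algebras form a Nielsen--Schreier variety) with Theorem~\ref{th:tame} applied as a black box. Nothing further is needed.
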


\section{Algebras with two compatible Lie brackets}

Recall that an algebra with two compatible Lie brackets is a vector space $V$ equipped with two operations $a_1,a_2\mapsto[a_1,a_2]$ and $a_1,a_2\mapsto\{a_1,a_2\}$ which are skew-symmetric, satisfy the Jacobi identity individually, and additionally their sum also satisfies the Jacobi identity. The latter condition is equivalent to the identity
\begin{multline*}
[\{a_1, a_2\}, a_3]-[\{a_1, a_3\}, a_2]-[a_1, \{a_2,a_3\}]+\\+
\{[a_1, a_2], a_3\}-\{[a_1, a_3], a_2\}-\{a_1, [a_2,a_3]\}=0.
\end{multline*}
Since it is known \cite{MR2333979} that the dimension of the $n$-th component of the operad of two compatible Lie brackets is equal to $n^{n-1}$, which is also the dimension of the $n$-th component of the operad of pre-Lie algebras \cite{MR1827084}, the result of Theorem\ref{th:preLie} suggests that the variety of algebras with two compatible Lie brackets might have the Nielsen--Schreier property. We shall now show that it is indeed the case. 

\begin{theorem}\label{th:compatLie}
The variety of algebras with two compatible Lie brackets has the Nielsen--Schreier property.
\end{theorem}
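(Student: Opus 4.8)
The plan is to apply Theorem \ref{th:SchrComb} directly, exactly as in the proof of Theorem \ref{th:preLie}, by exhibiting the reduced Gr\"obner bases of the shuffle operad governing two compatible Lie brackets for two different orderings and checking the combinatorial conditions (M1) and (M2). First I would pass to the shuffle operad: introduce the generating operations obtained from the two skew-symmetric brackets by the usual device for shuffle operads, namely $b(a_1,a_2)=[a_1,a_2]$ and $c(a_1,a_2)=\{a_1,a_2\}$ (each bracket being skew-symmetric contributes a single shuffle generator, since the opposite operation equals minus the original). The space of multilinear identities in arity three is spanned by the two shuffle Jacobiators for $b$ and for $c$ individually, together with the three $S_3$-translates of the compatibility (mixed Jacobi) identity; this is the defining ideal of the shuffle operad $\calO^f$.

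Next I would compute the reduced Gr\"obner basis for the reverse graded path-lexicographic ordering associated with the ordering $b>c$ of generators, and separately for an ordering (a superposition of a reversed lexicographic order on leaves with a graded lexicographic order on path sequences, as in Theorem \ref{th:preLie}) that makes the leading terms left combs. The decisive shortcut here is the same Koszulness criterion used for pre-Lie algebras and stated in \cite[Corollary 1]{MR3203367}: for a quadratic shuffle operad, a quadratic generating set is a Gr\"obner basis if and only if the number of shuffle tree monomials all of whose quadratic subtrees are leading terms equals $\dim$ of the arity $n$ component of the Koszul dual operad. The operad of two compatible Lie brackets is Koszul, and its Koszul dual is the operad of two compatible associative (in fact, its components have dimension $n$ — it is the operad governing pairs of compatible commutative products, the linear dual situation), so it suffices to check that for each of the two candidate sets of leading terms the count of admissible monomials with $n$ leaves equals that dimension. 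I would record the five leading terms explicitly for each ordering and verify that, for the first ordering, each has its minimal leaf directly joined to the root (so (M1) holds), and that, for the second ordering, each is a left comb whose second smallest leaf is a sibling of the minimal leaf (so (M2) holds).

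The main obstacle I anticipate is the bookkeeping of the five-dimensional space of arity-three identities and its division into leading and normal monomials: unlike the pre-Lie case, where the three defining relations already have distinct right-comb leading terms, here the $b$-Jacobiator, the $c$-Jacobiator, and the three mixed relations interact, and one must confirm that the reduced Gr\"obner basis does not acquire extra elements (i.e. that all $S$-polynomials reduce to zero) — this is precisely what the dimension count of the Koszul dual is invoked to bypass, so the real content is identifying the correct Koszul dual and its Hilbert series. Once the two Gr\"obner bases are in hand and their leading terms identified, the verification of (M1) and (M2) is immediate by inspection, and Theorem \ref{th:SchrComb} yields the Nielsen--Schreier property. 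I would also note, as in the pre-Lie case, that the underlying vector space of the universal multiplicative enveloping algebra of an algebra with two compatible Lie brackets then satisfies a PBW-type theorem as a byproduct of Lemma \ref{lm:partialfree}.
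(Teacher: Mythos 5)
Your overall strategy is exactly the paper's: pass to the shuffle operad, exhibit the leading terms of the defining relations for a reverse graded path-lexicographic ordering (for (M1)) and for an ordering making them left combs (for (M2)), and certify that the relations form a Gr\"obner basis by comparing the count of normal monomials with the dimension of the Koszul dual, which is $n$ in arity $n$. The paper does precisely this (using the plain graded path-lexicographic ordering for (M2), which suffices here, rather than the pre-Lie-style superposed ordering you propose, but that is immaterial).

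There is, however, a concrete error in your setup that would derail the verification if carried through literally: the arity-three relation space is \emph{three}-dimensional, not five-dimensional, and there are three leading terms, not five. The mixed compatibility relation, like the Jacobiator of a single skew-symmetric bracket, spans a one-dimensional copy of the sign representation of $S_3$ (a transposition of the arguments sends it to its negative), so it contributes a single relation, not ``three $S_3$-translates''. This matters because the whole point of the Koszul-dual dimension count is to match numbers: with two skew-symmetric generators there are $12$ quadratic shuffle monomials in arity $3$, and $12-3=9=3^{3-1}=\dim\calO(3)$, consistent with the known dimension $n^{n-1}$; a five-dimensional relation space would give $7$ normal monomials and the count would fail. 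The correct three leading terms are, for the reverse graded path-lexicographic ordering with $[-,-]>\{-,-\}$, the three right combs with vertex labels $([-,-],[-,-])$, $(\{-,-\},\{-,-\})$ and $([-,-],\{-,-\})$; the admissible label sequences along a comb with $n-1$ vertices are then exactly the $n$ words of the form $[-,-]^i\{-,-\}^{n-1-i}$, matching $\dim$ of the Koszul dual and closing the Gr\"obner basis argument. With that correction, your argument is the paper's proof.
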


\begin{proof}
For the reverse graded path-lexicographic ordering corresponding to the ordering \[[-,-]>\{-,-\}\] of generators, the defining relations form a Gr\"obner basis with the leading terms
 $$
\rbincomb{[-,-]}{[-,-]}{1}{2}{3}, \rbincomb{\{-,-\}}{\{-,-\}}{1}{2}{3}, \text{ and } \rbincomb{[-,-]}{\{-,-\}}{1}{2}{3}
 $$
satisfying the first combinatorial condition of Theorem \ref{th:SchrComb}. 
For the graded path-lexicographic ordering corresponding to the ordering \[[-,-]>\{-,-\}\] of generators, the defining relations form a Gr\"obner basis with the leading terms
 $$
\lbincomb{[-,-]}{[-,-]}{1}{2}{3}, \lbincomb{\{-,-\}}{\{-,-\}}{1}{2}{3}, \text{ and } \lbincomb{[-,-]}{\{-,-\}}{1}{2}{3}
 $$
satisfying the second combinatorial condition of Theorem \ref{th:SchrComb}. (Both of these statements easily follow from the observation quoted in the proof of Theorem \ref{th:preLie}, together with the known fact that the arity $n$ component of the Koszul dual operad is equal to $n$.) Combining these observations completes the proof.
\end{proof}

An almost identical proof works for Lie algebras with several compatible Lie brackets \cite{MR2440264}.

\section{Varieties whose identities do not use substitutions of operations}

In this section we record a generalization of the results of Kurosh and Polin mentioned in the introduction. 

\begin{proposition}
Suppose that all identities of the variety $\mathfrak{M}$ are combinations of structure operations (no substitutions are used). Then the variety $\mathfrak{M}$ has the Nielsen--Schreier property.
\end{proposition}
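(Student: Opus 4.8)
The plan is to deduce this from Theorem~\ref{th:SchrComb}, after observing that the hypothesis forces the operad $\calO$ encoding $\mathfrak{M}$ to be a \emph{free} operad, for which the combinatorial conditions (M1) and (M2) are satisfied vacuously.

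First I would translate the hypothesis into operadic language. Saying that $\mathfrak{M}$ is defined by identities that are combinations of structure operations, with no substitutions, means precisely that the defining relations of $\calO$ have weight one, i.e.\ lie inside the species $\calX$ of generators: an identity of degree $n$ of this kind is an element of $\calX(n)$, on which $S_n$ acts by permuting the inputs. Let $R\subseteq\calX$ be the sub-species spanned by these relations; then $\calO\cong\calT(\calX)/(R)$, the quotient of the free operad on $\calX$ by the operadic ideal generated by $R$. Using that $S_n$-modules over a field of zero characteristic are semisimple, I would split $\calX\cong R\oplus\calX'$ as species, and then check the elementary fact that $\calT(\calX)/(R)\cong\calT(\calX')$: after choosing a basis of $\calX$ adapted to this splitting, the ideal $(R)$ is spanned by those decorated trees that carry a basis vector of $R$ at some vertex, so the quotient is spanned precisely by the trees all of whose vertices are decorated by basis vectors of $\calX'$, and the grafting maps descend correctly. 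Hence $\calO\cong\calT(\calX')$ is a free operad; since the structure operations have arity at least two, it is reduced and connected, as Theorem~\ref{th:SchrComb} requires.

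Next I would pass to the shuffle world. The forgetful functor sends free symmetric operads to free shuffle operads, so $\calO^f\cong(\calT(\calX'))^f\cong\calT_\Sha((\calX')^f)$ is a free shuffle operad. A free shuffle operad has zero defining ideal, hence its reduced Gr\"obner basis for any admissible ordering is empty: there are no leading terms at all. Therefore the conditions (M1) and (M2) of Theorem~\ref{th:SchrComb}, both of which are universally quantified over the leading terms of a reduced Gr\"obner basis, hold vacuously --- (M1) for the reverse graded path-lexicographic ordering, and (M2) for, say, the graded path-lexicographic ordering. Theorem~\ref{th:SchrComb} then yields the Nielsen--Schreier property for $\mathfrak{M}$.

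The only step carrying genuine content is the first one, the identification of $\calO$ with a free operad: this is where the hypothesis ``no substitutions'' is actually used, and is the point one should spell out carefully, since everything afterwards is formal. I would also remark that this statement subsumes the theorems of Kurosh (no relations at all, $\calX'=\calX$) and of Polin (relations are symmetry constraints on the operations), and is in fact marginally more general, since $\calX'(n)$ is permitted to be an arbitrary quotient $S_n$-module of $\calX(n)$.
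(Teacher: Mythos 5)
Your proposal is correct and follows essentially the same route as the paper: both identify the operad encoding $\mathfrak{M}$ as a free operad (by splitting off the $S_n$-submodule of relations from the generators, using semisimplicity in characteristic zero) and then observe that Theorem~\ref{th:SchrComb} applies vacuously since the reduced Gr\"obner basis is empty. Your write-up merely spells out in more detail the isomorphism $\calT(\calX)/(R)\cong\calT(\calX')$ that the paper treats as immediate.
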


\begin{proof}
In the language of operads, we are talking about free operads. Indeed, each structure operation with $k$ arguments \emph{a priori} generates the regular representation of the group $S_k$, and identities that are combinations of structure operations give a collection of submodules in the regular modules which have to be quotiented out. What remains is certain collection of representations of symmetric groups that generates our operad freely. In particular, Theorem \ref{th:SchrComb} applies tautologically, since there are no relations to consider. 
\end{proof}

It turns out that this proposition implies the Nielsen--Schreier property for the variety of Akivis algebras \cite{MR0405261,MR1726261}, first proved in \cite{MR1899864}. Recall that an Akivis algebra is an algebra with one skew-symmetric binary operation $[-,-]$ and one ternary operation $(-,-,-)$ satisfying the identitiy
 \[
[[a_1,a_2],a_3]+
[[a_2,a_3],a_1]+
[[a_3,a_1],a_2]=
\sum_{\sigma\in S_3} (-1)^{\sigma} (a_{\sigma(1)},a_{\sigma(2)},a_{\sigma(3)}).
 \]

\begin{corollary}
The variety of Akivis algebras has the Nielsen--Schreier property. 
\end{corollary}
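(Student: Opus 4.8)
The plan is to deduce this from the Proposition above, which settles the Nielsen--Schreier property for any variety whose defining identities involve no substitutions of operations. The variety of Akivis algebras does not obviously admit such a presentation: the Akivis identity carries on its left the Jacobiator $\mathrm{Jac}(a_1,a_2,a_3):=[[a_1,a_2],a_3]+[[a_2,a_3],a_1]+[[a_3,a_1],a_2]$, a triple substitution of the bracket into itself. The crucial observation to establish is that this identity can be rewritten, after an invertible change of the ternary operation, as a pure symmetry condition.

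Concretely, I would first check --- by inspecting the transposition $(1\,2)$ --- that $\mathrm{Jac}$ transforms according to the sign representation of $S_3$, so that the complete antisymmetrization $A(x)(a_1,a_2,a_3):=\sum_{\sigma\in S_3}(-1)^\sigma x(a_{\sigma(1)},a_{\sigma(2)},a_{\sigma(3)})$ satisfies $A(\mathrm{Jac})=6\,\mathrm{Jac}$. Introducing the new ternary operation $T(a_1,a_2,a_3):=(a_1,a_2,a_3)-\tfrac16\,\mathrm{Jac}(a_1,a_2,a_3)$, a one-line computation then shows that, once $[-,-]$ is skew-symmetric, the Akivis identity $\mathrm{Jac}=A\big((-,-,-)\big)$ is equivalent to $A(T)=0$; since conversely $(-,-,-)=T+\tfrac16\,\mathrm{Jac}$ and $\mathrm{Jac}$ only involves the unchanged bracket, this is an invertible change of structure operations, hence the two presentations define the same variety (equivalently, isomorphic operads).

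In the new presentation the variety of Akivis algebras is defined by the skew-symmetry of $[-,-]$ and by $A(T)=0$, both of which are combinations of structure operations using a single operation and no substitution. Hence the Proposition above applies and yields the Nielsen--Schreier property. Equivalently, one sees that the operad of Akivis algebras is freely generated by the sign representation of $S_2$ in arity two together with the five-dimensional $S_3$-representation obtained from the regular representation by removing its sign isotype in arity three, and Theorem~\ref{th:SchrComb} applies with no relations to check.

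The step requiring the most care is the equivalence of the two presentations, namely checking that the translated set of relations is exactly $\{\,[-,-]\text{ skew-symmetric},\ A(T)=0\,\}$ with nothing extra hiding in higher arities. This will be automatic once one notices that the substitution $(-,-,-)\leftrightarrow T$ is realized by an $S_3$-equivariant automorphism of the free operad on a binary and a ternary generator --- equivariance holds precisely because $\mathrm{Jac}$ spans the sign isotype --- so that it carries the operadic ideal generated by the original relations exactly onto the operadic ideal generated by the new ones. Everything after that is a direct appeal to the Proposition above.
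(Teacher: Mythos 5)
Your proposal is correct and follows essentially the same route as the paper: both identify the Jacobiator with the sign-isotypic component of the regular $S_3$-module of ternary generators and use an invertible change of the ternary operation to turn the Akivis identity into the elimination of that component, exhibiting the operad as free on the skew-symmetric bracket plus a five-dimensional space of ternary operations, so that the Proposition on identities without substitutions applies. Your explicit substitution $T=(-,-,-)-\tfrac16\,\mathrm{Jac}$ and the verification $A(T)=0$ is just a more computational phrasing of the paper's ``elimination'' argument.
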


\begin{proof}
The six-dimensional space of ternary generators of the corresponding operad is the regular representation of $S_3$ generated by $(-,-,-)$; as such, it splits into a direct sum of one copy of the trivial representation, one copy of the sign representation, and two copies of the two-dimensional irreducible representation. We note that the element 
 \[
\sum_{\sigma\in S_3} (-1)^{\sigma} (a_{\sigma(1)},a_{\sigma(2)},a_{\sigma(3)}).
 \]
found in the right hand side of the defining identity of the Akivis algebras is precisely the generator corresponding to the copy of the sign representation, and the Akivis identity allows one to eliminate this element, replacing it by the Jacobiator 
 \[
[[a_1,a_2],a_3]+
[[a_2,a_3],a_1]+
[[a_3,a_1],a_2].
 \]
This elimination implements an isomorphism between the operad of the Akivis algebras and the free operad generated by one skew-symmetric binary operation $[-,-]$ and a five-dimensional space of ternary operations where the $S_3$-action is the direct sum of the trivial representation and two copies of the two-dimensional irreducible representation, so it has the Nielsen--Schreier property.  
\end{proof}

A similar argument can prove that the variety of Sabinin algebras \cite{MR3174282,MR2304338,MR924255,MR1899864} has the Nielsen--Schreier property. Indeed, according to \cite[Th.~6.1]{MR3489597}, the operad of Sabinin algebras is free, so it has the Nielsen--Schreier property.

\section{Intersection of Nielsen--Schreier varieties}

Let us record a simple general observation that allows one to construct new Nielsen--Schreier varieties from known ones.

\begin{proposition}
Suppose that two varieties with disjoint sets of structure operations both satisfy the conditions of Theorem \ref{th:SchrComb}. Then the intersection of those varieties satisfy this condition as well. In particular, that intersection has the Nielsen--Schreier property. 
\end{proposition}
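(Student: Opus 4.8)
The plan is to reduce everything to a statement about the reduced Gröbner bases of the shuffle operad associated to the intersection variety. Let $\mathfrak{M}_1$ and $\mathfrak{M}_2$ be two varieties with disjoint sets of structure operations, encoded by operads $\calO_1$ and $\calO_2$, and let $\mathfrak{M}=\mathfrak{M}_1\cap\mathfrak{M}_2$ be their intersection; the operad $\calO$ encoding $\mathfrak{M}$ is the free product (coproduct) $\calO_1\ast\calO_2$ in the category of operads, since an $\mathfrak{M}$-algebra is simply a vector space equipped with both an $\calO_1$-algebra structure and an $\calO_2$-algebra structure, with no relations imposed between operations coming from different sets. The key combinatorial input I would use is the well-known fact that if $\calX=\calX_1\oplus\calX_2$ is the (disjoint) union of the generating species, then for a suitable admissible ordering of shuffle trees built from $\calX$, the reduced Gröbner basis of $(\calO_1\ast\calO_2)^f$ is precisely the union $G_1\sqcup G_2$ of the reduced Gröbner bases of $\calO_1^f$ and $\calO_2^f$; this is because any shuffle tree monomial for the free product is obtained by grafting monomials whose vertices are all from $\calX_1$ or all from $\calX_2$, and an S-polynomial can only arise from an overlap of two relations whose overlapping vertices are decorated by generators of the same $\calO_i$ (relations from $G_i$ only involve generators from $\calX_i$), so no new S-polynomials between $G_1$ and $G_2$ appear, and each $G_i$ remains reduced in the larger free shuffle operad.

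The second ingredient is the freedom in choosing the orderings. For condition (M1), the reverse graded path-lexicographic ordering depends on an ordering of the generators; I would simply take any total ordering on $\calX_1\sqcup\calX_2$ refining the given orderings on $\calX_1$ and $\calX_2$ (for definiteness, first all generators of $\calX_1$, then all of $\calX_2$). With this choice, the reverse graded path-lexicographic ordering on shuffle trees for $\calO^f$ restricts on monomials built purely from $\calX_i$ to the reverse graded path-lexicographic ordering for $\calO_i^f$, so each leading term of $G_i$ is unchanged, and by hypothesis each such leading term has its minimal leaf directly connected to the root. Hence (M1) holds for $\calO^f$. For condition (M2), by hypothesis there exist orderings $\prec_1$ and $\prec_2$ (each of the graded-path-lexicographic type, possibly with a permuted lexicographic comparison as in Theorem~\ref{th:preLie}) for which every leading term of $G_i$ is a left comb whose second smallest leaf is a sibling of the minimal leaf; again I choose a combined ordering refining both, so that the leading terms of $G_1\sqcup G_2$ are exactly those of $G_1$ and $G_2$, each still a left comb with the required sibling property. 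Thus (M2) holds for $\calO^f$, and Theorem~\ref{th:SchrComb} applies to $\mathfrak{M}$.

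The main obstacle, and the step deserving the most care, is justifying that $G_1\sqcup G_2$ really is a Gröbner basis for the free product and that the ambient ordering can be chosen compatibly with both $\prec_1$ and $\prec_2$ simultaneously. Concretely, one must check: (i) that an admissible ordering of shuffle trees on the combined generating set $\calX_1\sqcup\calX_2$ exists which restricts to the prescribed orderings on monomials involving a single $\calX_i$ — this is where the disjointness of the operation sets is essential, since a vertex decoration unambiguously belongs to one side, and one can compare two trees by first looking at which "side" the top relevant vertices come from; (ii) that the S-polynomials of $G_1\sqcup G_2$ all reduce to zero — the only overlaps are self-overlaps within $G_i$ (which reduce to zero because $G_i$ is a Gröbner basis for $\calO_i^f$) and potential overlaps between an element of $G_1$ and an element of $G_2$, but such an overlap would require a common divisor monomial decorated by generators of both types at a single vertex, which is impossible; (iii) that the basis $G_1\sqcup G_2$ is reduced, i.e.\ no leading term divides a monomial occurring in another basis element — again immediate since leading terms from $G_i$ only divide monomials built from $\calX_i$, and elements of $G_i$ only involve $\calX_i$. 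Once (i)--(iii) are in place, the conclusion that both (M1) and (M2) are inherited is essentially bookkeeping, and the final sentence follows from Theorem~\ref{th:SchrComb}.
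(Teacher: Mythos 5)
Your proposal is correct and follows essentially the same route as the paper: identify the intersection with the free product of operads, observe that the reduced Gr\"obner basis is the disjoint union $G_1\sqcup G_2$ for a combined admissible ordering restricting to the given ones on each side (no cross-overlaps since a shared vertex cannot be decorated by generators of both disjoint sets), and conclude that (M1) and (M2) are inherited. You supply more detail than the paper does on the Buchberger-criterion bookkeeping, but the argument is the same.
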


\begin{proof}
In the language of operads, the intersection of two varieties with disjoint sets of structure operations corresponds to the categorical coproduct, also known as the free product, of the corresponding operads. For the reverse path lexicographic ordering, the reduced Gr\"obner basis of such operad is the union of the two reduced Gr\"obner bases, assuring the condition (M1). Moreover, given two different orderings of shuffle trees decorated by two subsets of structure operations, one can find an ordering of shuffle trees decorated by their union that restricts to the two given orderings on the corresponding subsets of shuffle trees, implying that the reduced Gr\"obner basis of such operad is the union of the two reduced Gr\"obner bases and assuring the condition (M2). Thus, Theorem \ref{th:SchrComb} applies. 
\end{proof}

Our first observation is that this result applies to Lie-admissible algebras \cite{MR27750}. Recall that a Lie-admissible algebra is an algebra with one binary operation satisfying the identitiy
 \[
\sum_{\sigma\in S_3} (-1)^{\sigma} \left((a_{\sigma(1)}a_{\sigma(2)})a_{\sigma(3)}-a_{\sigma(1)}(a_{\sigma(2)}a_{\sigma(3)})\right)=0.
 \]

\begin{corollary}\label{cor:LieAdm}
The variety of Lie-admissible algebras has the Nielsen--Schreier property.
\end{corollary}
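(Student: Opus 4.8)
The plan is to identify the operad governing Lie-admissible algebras with a free product of two operads already known to satisfy the hypotheses of Theorem~\ref{th:SchrComb}, and then to invoke the preceding proposition.

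First I would change coordinates in the single binary operation. Over a field of zero characteristic, giving one binary operation $\mu(a_1,a_2)=a_1a_2$ is the same as giving its symmetric part $\mu^+$ and its antisymmetric part $\mu^-$, since $\mu=\mu^++\mu^-$; thus the variety of all algebras with one binary operation coincides with the variety of all algebras equipped with one symmetric and one antisymmetric binary operation. The relevant classical fact is that the full antisymmetrisation of the associator of $\mu$ depends only on $\mu^-$: writing $[a_1,a_2]:=a_1a_2-a_2a_1$, one has
\[
\sum_{\sigma\in S_3}(-1)^\sigma\bigl((a_{\sigma(1)}a_{\sigma(2)})a_{\sigma(3)}-a_{\sigma(1)}(a_{\sigma(2)}a_{\sigma(3)})\bigr)
=\bigl[[a_1,a_2],a_3\bigr]+\bigl[[a_2,a_3],a_1\bigr]+\bigl[[a_3,a_1],a_2\bigr],
\]
and the right-hand side involves only the antisymmetric operation $[-,-]=2\mu^-$. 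Consequently, in the new coordinates the defining identity of Lie-admissible algebras imposes no condition on $\mu^+$ and imposes precisely the Jacobi identity on $\mu^-$. Hence the operad of Lie-admissible algebras is the free product of the operad of Lie algebras, with generator $\mu^-$, and the operad of commutative (nonassociative) algebras, with generator $\mu^+$.

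It remains to check the hypotheses of the preceding proposition. The operad of Lie algebras satisfies properties (M1) and (M2): this is exactly the example worked out right after Theorem~\ref{th:SchrComb}, where the shuffle Jacobiator is shown to be the reduced Gr\"obner basis for both the graded and the reverse graded path-lexicographic orderings, with leading terms of the required shape. The operad of commutative algebras is free on a single symmetric binary generator, so it has no relations and satisfies (M1) and (M2) tautologically, just as in the section on varieties whose identities do not use substitutions of operations. Since the two varieties have disjoint sets of structure operations, the preceding proposition applies: their intersection, whose operad is the free product described above, again satisfies the conditions of Theorem~\ref{th:SchrComb}, and therefore the variety of Lie-admissible algebras has the Nielsen--Schreier property. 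The only step that requires genuine care is the identification of the antisymmetrised associator with the Jacobiator of the commutator; once this is in place, the absence of any cross-relation between $\mu^+$ and $\mu^-$ makes the free-product description immediate, and the rest is a direct application of results already established.
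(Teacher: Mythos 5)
Your proposal is correct and follows essentially the same route as the paper: rewriting the single operation via its symmetric and antisymmetric parts, observing that the Lie-admissibility identity is exactly the Jacobi identity for the commutator (so the operad is the free product of the Lie operad and the free operad on one commutative binary generator), and then applying the proposition on intersections of varieties with disjoint structure operations together with the verified conditions (M1) and (M2) for each factor. The only cosmetic difference is that you verify the antisymmetrised-associator identity directly where the paper cites it.
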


\begin{proof}
In terms of the operations $a_1\circ a_2=a_1a_2+a_2a_1$ and $[a_1,a_2]=a_1a_2-a_2a_1$, the Lie admissibility relation becomes the Jacobi identity for the second operation. It was observed in \cite{MR2225770} that this amounts to the fact that the operad of Lie-admissible algebras is the coproduct of the Lie operad and the free operad on one binary commutative operation. Thus, the variety of Lie-admissible algebras can be viewed as the intersection of the variety of Lie algebras and the variety of all commutative algebras, both of which satisfy the combinatorial conditions of Theorem~\ref{th:SchrComb}. 
\end{proof}

\section{Increasing degrees of identities}

Let us prove a generalization of a result of the second author who proved that the variety of algebras satisfying the identity $xx^2=0$ has the Nielsen--Schreier property.

\begin{theorem}\label{th:rightnil}
For every degree $n\ge 1$, the variety of algebras satisfying the right nil identity 
 \[
x(x(\cdots(xx^2)))=0
 \]
has the Nielsen--Schreier property.
\end{theorem}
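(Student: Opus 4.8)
The plan is to apply Theorem~\ref{th:SchrComb}, so everything reduces to computing (the leading terms of) the reduced Gr\"obner basis of the shuffle operad associated to this variety and checking the two combinatorial conditions (M1) and (M2). First I would set up the multilinear description: the identity $x(x(\cdots(xx^2)))=0$ of degree $n+2$ linearizes to a single multilinear relation which is a sum over permutations of the variables of the right comb $a_1(a_2(\cdots(a_{n+1}a_{n+2})))$; passing to the shuffle operad generated by the two operations $\alpha(a_1,a_2)=a_1a_2$ and $\beta(a_1,a_2)=a_2a_1$ (needed because the single binary operation has no symmetry), this multilinear relation becomes a sum of right-comb shuffle tree monomials, each decorated by a word in $\alpha,\beta$ determined by which inputs are ``on the spine.'' The key structural observation is that the leading term, for the reverse graded path-lexicographic ordering with $\alpha>\beta$, should be the all-$\alpha$ right comb $\alpha(a_1,\alpha(a_2,\cdots,\alpha(a_{n+1},a_{n+2})))$, whose minimal leaf $a_1$ is directly connected to the root; for an ordering adapted to (M2) one would instead pick out a left-comb leading term with the minimal leaf's sibling being the second-smallest leaf.

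The main work is to verify that the single defining relation (together with whatever its consequences in the same degree are) already forms a Gr\"obner basis, i.e.\ that all S-polynomials reduce to zero, and to identify all leading terms. Here I would use the same dimension-counting technique invoked in the proof of Theorem~\ref{th:preLie}: for a shuffle operad with a set of relations whose leading terms are known, one counts the shuffle tree monomials avoiding all those leading terms as divisors and compares with the dimensions of the operad; equivalently, one can try to recognize the operad (or at least the relevant quasi-free model) and compare Hilbert/exponential generating series. Concretely, the variety defined by $x(x(\cdots(xx^2)))=0$ is obtained from the free operad on one binary operation by quotienting out the operadic ideal generated by one relation in degree $n+2$; since this relation is ``as far from the spine as possible'' (a pure right comb), the combinatorics of which monomials survive is rigid, and I expect the relation to be its own Gr\"obner basis with the all-$\alpha$ right comb (and possibly a bounded family of related right combs involving $\beta$) as leading terms. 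One then checks directly that no self-overlap of such a leading term produces a new obstruction --- overlaps of right combs of this shape are themselves right combs and reduce using the same relation.

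The second ordering, needed for (M2), requires finding an admissible ordering --- plausibly the superposition of a reversed lexicographic order on leaf permutations with a graded lexicographic order on path sequences, exactly as in the pre-Lie proof --- for which the leading term of the defining relation becomes a \emph{left} comb whose second-smallest leaf is a sibling of the minimal leaf. This is where I expect the main obstacle to lie: it is not a priori clear that such an ordering exists for every $n$, since the defining relation is genuinely spread out over many monomials and the left-comb condition is restrictive; one must check both that the desired monomial is maximal in the relation and that adding it as a leading term still yields a Gr\"obner basis (again verified by the monomial-counting criterion, comparing against the Koszul dual or against known dimension data for these operads). Once both conditions (M1) and (M2) are in place, Theorem~\ref{th:SchrComb} applies verbatim and the Nielsen--Schreier property follows. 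A sanity check is that for $n=1$ this must recover the known result of \cite{MR1302528} for $xx^2=0$, which gives a concrete base case against which to calibrate the choice of orderings.
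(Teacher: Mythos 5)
Your overall strategy is the right one --- reduce to Theorem~\ref{th:SchrComb} by exhibiting, for two suitable orderings, leading terms of a Gr\"obner basis of the required shapes --- but there is a genuine gap in the execution, and it sits exactly where you say you ``expect the main obstacle to lie.'' Working with the generators $\alpha(a_1,a_2)=a_1a_2$ and $\beta(a_1,a_2)=a_2a_1$, the multilinearized relation is \emph{not} a sum of right-comb shuffle trees: of the $n!$ monomials $a_{\sigma(1)}(a_{\sigma(2)}(\cdots))$, only those with $\sigma(1)=1$ even have the minimal leaf at the root, and the rest are zig-zag caterpillars of varying shuffle-tree shapes. More seriously, your candidate leading terms (the all-$\alpha$ right comb for (M1), and some left comb for (M2)) have nontrivial self-overlaps, so one must actually show that the resulting S-polynomials reduce to zero. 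The dimension-counting shortcut you invoke from the pre-Lie proof is not available here: that criterion applies to \emph{quadratic} operads whose Koszul dual has known dimensions, whereas this relation lives in arity $n$ and the Hilbert series of the quotient operad is not known in advance. So the claim ``the relation is its own Gr\"obner basis'' is left unproved.

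The paper closes this gap with one change of basis that your proposal misses: it rewrites everything in terms of the symmetrized and skew-symmetrized operations $a_1\circ a_2=a_1a_2+a_2a_1$ and $[a_1,a_2]=a_1a_2-a_2a_1$. Because the defining relation is a full symmetrization, the innermost product $a_{\sigma(n-1)}a_{\sigma(n)}$ gets symmetrized over its two arguments, so every surviving monomial has its top vertex labelled $\circ$; for the reverse graded path-lexicographic ordering prioritizing $[-,-]$ the leading term is then the right comb with all vertices labelled $[-,-]$ except the top one labelled $\circ$, and for the graded path-lexicographic ordering it is the analogous left comb with leaves $1,\ldots,n$ in planar order. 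A tree whose top vertex carries a different label from all the others cannot overlap with itself, so there are no S-polynomials at all: the single relation is automatically a Gr\"obner basis for both orderings, and (M1) and (M2) are read off directly. This is the one idea you would need to add to make your plan go through; without it, the Gr\"obner basis verification remains open.
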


\begin{proof}
This identity is equivalent to the multilinear one
 \[
\sum_{\sigma\in S_n} a_{\sigma(1)}(a_{\sigma(2)}(\cdots(a_{\sigma(n-2)}(a_{\sigma(n-1)}a_{\sigma(n)}))))=0.
 \]
Let us use, once again, the symmetrized and the skew-symmetrized operations $a_1\circ a_2=a_1a_2+a_2a_1$ and $[a_1,a_2]=a_1a_2-a_2a_1$. For the reverse graded path-lexicographic ordering corresponding to the ordering of generators prioritizing $[-,-]$, this identity has as the leading term the only right comb with all the vertices but the one at the top labelled $[-,-]$, and the top vertex labelled $-\circ-$. For the graded path-lexicographic ordering corresponding to the ordering  of generators prioritizing $[-,-]$, this identity has as the leading term the left comb with all the vertices but the one at the top labelled $[-,-]$, the top vertex labelled $-\circ-$, and the leaves labelled $1$,\ldots, $n$ in the planar order. In each case, there are no self-overlaps, so we obtain a Gr\"obner basis, and Theorem \ref{th:SchrComb} applies.
\end{proof}

\section{Deformation of the right nil identity}

Our methods also lead to the following rather striking generalization of Theorem \ref{th:rightnil} that exhibits multiparametrics families of Nielsen--Schreier varieties. For a planary binary tree $T$ with $n$ leaves, let $m_T(x_1,\ldots,x_n)$ denote the nonassociative product of $x_1,\ldots,x_n$ with the bracketing according to $T$. For example, for 
 \[
T=\binfork{}{}{}{}{}{}{},
 \]
we have $m_T(x,x,x,x)=((xx)(xx))=(x^2)^2$. To state the following theorem, we fix a positive integer $n>2$, and consider an arbitrary way to assign a scalar $b_T\in\k$ to every planar binary tree $T$ with $n$ leaves. 

\begin{theorem}\label{th:parametric}
For a Zariski open set in the affine space with coordinates $b_T$, the variety of algebras satisfying the identity 
 \[
\sum_T b_T m_T(x,\ldots,x)=0
 \] 
has the Nielsen--Schreier property.
\end{theorem}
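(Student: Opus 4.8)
The plan is to show that for generic values of the parameters $b_T$, the single multilinear identity
\[
\sum_T b_T\, m_T(a_{\sigma(1)},\ldots,a_{\sigma(n)})=0,\quad\text{symmetrized over }\sigma\in S_n,
\]
forms a Gr\"obner basis of the corresponding shuffle operad $\calO^f$ that satisfies both combinatorial conditions of Theorem~\ref{th:SchrComb}. As in the proof of Theorem~\ref{th:rightnil}, I would pass to the operations $a_1\circ a_2=a_1a_2+a_2a_1$ and $[a_1,a_2]=a_1a_2-a_2a_1$, which together generate the free operad on one commutative and one anticommutative binary operation. After this change of basis, the identity becomes a single element $R$ of weight $n$ in the free shuffle operad on these two generators; its coefficients are linear combinations of the $b_T$, hence polynomial functions on the affine space of parameters.

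The first step is to identify, for each of the two relevant orderings, the ``expected'' leading monomial and to show that its coefficient in $R$ is a nonzero polynomial in the $b_T$. For the reverse graded path-lexicographic ordering prioritizing $[-,-]$ I would take the right comb with all internal vertices but the top one labelled $[-,-]$ and the top vertex labelled $-\circ-$ (exactly as in Theorem~\ref{th:rightnil}); for the ordering prioritizing $[-,-]$ in the graded path-lexicographic sense I would take the corresponding left comb with leaves $1,\ldots,n$ in planar order. The key point is that the coefficient of each such comb, expressed in the $b_T$, is \emph{not identically zero}: specializing all $b_T$ to the single tree appearing in $x(x(\cdots(xx^2)))$ recovers the situation of Theorem~\ref{th:rightnil}, where (by that proof) the coefficient is nonzero. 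Since a polynomial that is nonzero at one point is nonzero on a Zariski-dense open set, there is a Zariski open set $U_1$ on which both leading coefficients are nonzero, so on $U_1$ the leading terms are as predicted.

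The second step is to verify that, on a possibly smaller Zariski open set, the relation $R$ forms a \emph{reduced} Gr\"obner basis for each ordering. Here there is essentially nothing to resolve: a single relation has no overlaps with any other relation, and since the leading monomial is a comb of weight $n\ge 3$ built from the two binary generators, its only potential self-overlaps occur at the root or at a bottom leaf, producing S-polynomials of weight $2n-1$. I would argue that these S-polynomials reduce to zero --- for the identity of Theorem~\ref{th:rightnil} this is exactly the ``no self-overlaps'' observation made there, and reduction to zero is again an algebraic condition (vanishing of the normal form), hence the set of $b_T$ for which it holds is the complement of a proper closed subset, provided it is nonempty; and it is nonempty because it contains the point of Theorem~\ref{th:rightnil}. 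Intersecting with $U_1$ gives the desired Zariski open set $U$. Finally, on $U$ the leading terms visibly satisfy (M1) (minimal leaf directly connected to the root, as both combs have this shape) and (M2) (the second ordering's leading term is a left comb whose second smallest leaf is the sibling of the minimal leaf), so Theorem~\ref{th:SchrComb} applies and the variety is Nielsen--Schreier.

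The main obstacle is the second step: one must be genuinely careful that the self-overlap S-polynomials reduce to zero \emph{generically}, rather than only at the special point. The subtlety is that reduction is performed relative to the leading term, and if the leading term changed (which it does not, by step one, on $U_1$), the reduction would change; on $U_1$ the reduction procedure is ``the same'' combinatorially as at the special point, but the intermediate coefficients are rational functions of the $b_T$, and one must check that no denominator is identically zero. Since the only denominators that appear are powers of the (nonzero on $U_1$) leading coefficient, this is harmless, but it is the point that requires the most attention. An alternative, cleaner route to the same conclusion is to note that the generic identity defines an operad whose Hilbert series is \emph{upper semicontinuous} in the parameters and equals the (known) Hilbert series of the Theorem~\ref{th:rightnil} operad at the special point; combined with the fact that the predicted normal monomials always span, this forces the predicted set to be an exact basis on a Zariski open set, which is precisely the Gr\"obner basis condition.
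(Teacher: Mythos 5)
Your first step --- identifying the expected leading terms for the two orderings and observing that their coefficients are linear forms in the $b_T$ which are not identically zero because they are nonzero at the right-nil specialization of Theorem~\ref{th:rightnil} --- is correct and matches the paper (indeed it is slightly more explicit than the paper about why the linear form is nonzero). The gap is in your second step, and it is twofold. First, the difficulty you set out to resolve is not actually there: the predicted leading term is a comb whose root vertex is labelled $[-,-]$ while its top vertex (the one whose both children are leaves) is labelled $-\circ-$; since the monomial is a caterpillar, any self-overlap identifies the spine of one occurrence with part of the spine of the other (the shuffle/planarity constraints leave no other option), and this forces the top vertex of one occurrence, labelled $\circ$, to coincide with a non-top vertex of the other, labelled $[-,-]$. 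So for $n>2$ there are \emph{no} self-overlaps, hence no S-polynomials at all, and the single relation is a Gr\"obner basis as soon as its leading term is the predicted one. This is the one-line observation the paper relies on (``there are no self-overlaps''), and it is the whole content of the verification.

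Second, and more seriously, the arguments you substitute for this observation are both logically incorrect, so if the self-overlaps you posit did exist, your proof would not close. ``The S-polynomial reduces to zero'' is the \emph{vanishing} of finitely many polynomials (or rational functions whose only denominators are powers of the leading coefficient) in the $b_T$; its locus is Zariski \emph{closed}, not the complement of a closed set, so the fact that it contains the right-nil point says nothing about generic parameters. The semicontinuity alternative runs the wrong way for the same reason: the rank of the map whose image is the ideal is generically \emph{maximal}, so the dimension of the quotient operad is generically $\le$ its value at the special point, which is in turn $\le$ the number of normal monomials; this chain of inequalities cannot force the equality that the Gr\"obner basis condition requires. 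The correct route is the paper's: once the leading term is as predicted (your step one), the absence of self-overlaps makes the Gr\"obner property automatic, and conditions (M1) and (M2) of Theorem~\ref{th:SchrComb} are then read off the two combs exactly as you state.
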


\begin{proof}
This identity is equivalent to the multilinear identity
 \[
\sum_{\sigma\in S_n} b_T m_T(a_{\sigma(1)},\ldots,a_{\sigma(n)})=0.
 \]
As before, it will be convenient to present our variety via the symmetrized and the skew-symmetrized operations $a_1\circ a_2=a_1a_2+a_2a_1$ and $[a_1,a_2]=a_1a_2-a_2a_1$. 

For the reverse graded path-lexicographic ordering corresponding to the ordering of generators  prioritizing $[-,-]$, there exists a Zariski open set in the affine space with coordinates $b_T$ for which this identity has as the leading term the only right comb with all the vertices but the one at the top labelled $[-,-]$, and the top vertex labelled $-\circ-$. Indeed, the coefficient of that term is a certain linear combination of $b_T$, so our Zariski open set is a complement of a hyperplane. Similarly, for the graded path-lexicographic ordering corresponding to the ordering of generators  prioritizing $[-,-]$, there exists a Zariski open set in the affine space with coordinates $b_T$ for which this identity has as the leading term the left comb with all the vertices but the one at the top labelled $[-,-]$, the top vertex labelled $-\circ-$, and the leaves labelled $1$,\ldots, $n$ in the planar order. In each case, there are no self-overlaps, so we obtain a Gr\"obner basis, and Theorem \ref{th:SchrComb} applies.
\end{proof}

Let us examine the cases $n=3$ and $n=4$ in some detail. 

For $n=3$, we are dealing with the identity $\alpha x^2x+\beta xx^2=0$. A simple computation shows that for $\alpha=\beta$ this variety is closely related to the variety of mock Lie algebras and is not Nielsen--Schreier, while for $\alpha\ne\beta$ this variety is Nielsen--Schreier. However, this example is producing a parametric family that is only superficially infinite: in fact, for $\alpha\ne\pm\beta$ all the corresponding operads are pairwise isomorphic, and hence isomorphic to the second author's variety $xx^2=0$, via the automorphism of the free operad on one binary generator sending the generator $a_1a_2$ to $a_1a_2+\lambda a_2a_1$; in the language of varieties this corresponds to the notion of quasi-equivalence going back to the work of Albert\cite{MR32609}. Thus, the only new example obtained this way is the variety of algebras satisfying the identity $xx^2=x^2x$. (Note that the Nielsen--Schreier property obviously fails for the variety of \emph{all} power associative algebras, which is defined, by a remarkable result of Albert \cite{MR26044,MR27750} by the above identity together with just one extra identity $(x^2x)x=x^2x^2$.) 

For $n=4$, we are dealing with the identity 
 \[
\alpha_1 (x^2x)x+\alpha_2(xx^2)x+\alpha_3(x^2)^2+\alpha_4 x(x^2x)+\alpha_5x(xx^2)=0. 
 \]
Up to re-scaling, there are four algebraically independent parameters, which, since the automorphisms of the free operad depend on only one parameter, immediately implies the following unexpected result.

\begin{corollary}
There are infinitely many pairwise not quasi-equivalent Nielsen--Schreier varieties of algebras defined by identities of degree four.
\end{corollary}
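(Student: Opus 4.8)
The plan is to make rigorous the dimension count sketched just above the statement. Write $V$ for the $5$-dimensional space of degree four identities in one variable, spanned by the five monomials $m_T(x,x,x,x)$ with $T$ a planar binary tree with four leaves; for $p=(\alpha_1,\dots,\alpha_5)\in V\setminus\{0\}$ let $f_p\in\calT(\calX)(4)$ be the multilinearization of the corresponding identity, where $\calX$ is the linear species of one binary operation, and let $\mathfrak{M}_p$ be the variety defined by $f_p$. Since the multilinearization of a one-variable homogeneous identity is its full polarization, $f_p=\sum_T\alpha_T\sum_{\sigma\in S_4}m_T(a_{\sigma(1)},\dots,a_{\sigma(4)})$ is $S_4$-invariant, and rescaling $p$ leaves $\mathfrak{M}_p$ unchanged, so $\mathfrak{M}_p$ depends only on $[p]\in\mathbb{P}(V)\cong\mathbb{P}^4$. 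By Theorem~\ref{th:parametric} with $n=4$, those $[p]$ for which $\mathfrak{M}_p$ has the Nielsen--Schreier property form a subset $U\subseteq\mathbb{P}^4$ containing the image of the complement of finitely many hyperplanes in $V$; in particular $U$ is a nonempty Zariski-open set with $\dim U=4$.

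Next I would translate quasi-equivalence into an orbit condition. An automorphism $\phi$ of the free operad $\calT(\calX)$ is determined by $\phi(a_1a_2)=\lambda\,a_1a_2+\nu\,a_2a_1$ with $\lambda^2\ne\nu^2$, so these automorphisms form a $2$-dimensional algebraic group $A$ acting algebraically and linearly on $V$ (for a fixed tree shape the action is polynomial in $\lambda,\nu$), and the one-parameter subgroup of rescalings $a_1a_2\mapsto\lambda\,a_1a_2$ acts on $V$ by the single scalar $\lambda^3$; hence the induced action of $A$ on $\mathbb{P}(V)$ factors through a group $G$ with $\dim G\le 1$, as anticipated in the paper. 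Suppose now that $\mathfrak{M}_p$ and $\mathfrak{M}_q$ are quasi-equivalent, so that some $\phi\in A$ carries the operadic ideal generated by $f_p$ onto the one generated by $f_q$. Because the operad $\calT(\calX)$ is connected, the arity four part of the ideal generated by $f_p$ is the $S_4$-submodule $\k[S_4]f_p\subseteq\calT(\calX)(4)$, which equals $\k f_p$ by $S_4$-invariance of $f_p$; likewise the arity four part of the ideal generated by $f_q$ is $\k f_q$, while $\phi(f_p)$ is $S_4$-invariant since $\phi$ is $S_4$-equivariant in each arity. Equality of the two ideals in arity four thus reads $\k\phi(f_p)=\k f_q$, so $\phi(f_p)$ and $f_q$ are proportional, i.e.\ $[p]$ and $[q]$ lie in one $G$-orbit in $\mathbb{P}^4$.

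To conclude, recall that orbits of an algebraic group action are locally closed of dimension at most $\dim G\le 1$. If the Nielsen--Schreier locus $U$ met only finitely many $G$-orbits it would be contained in a finite union of subvarieties of dimension at most $1$, contradicting $\dim U=4$; hence $U$ meets infinitely many $G$-orbits. Choosing one point $[p_i]\in U$ from each of infinitely many distinct orbits, the varieties $\mathfrak{M}_{p_1},\mathfrak{M}_{p_2},\dots$ are all Nielsen--Schreier by Theorem~\ref{th:parametric} and pairwise non-quasi-equivalent by the previous paragraph. The only step requiring genuine care is that middle one — that quasi-equivalence really does confine $[p]$ and $[q]$ to a common $G$-orbit — which works precisely because both defining ideals are principal in arity four with an $S_4$-\emph{invariant} generator, so equality of the ideals forces proportionality of those generators; everything else is a routine comparison of dimensions.
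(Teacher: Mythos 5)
Your argument is correct and is precisely the rigorous version of the dimension count the paper itself gives (four projective parameters for the identity versus a one-dimensional group of quasi-equivalences acting on $\mathbb{P}^4$); the step you rightly single out — that the arity-four part of each defining ideal is the line spanned by the $S_4$-invariant polarization, so an operad automorphism realizing a quasi-equivalence must send $f_p$ to a scalar multiple of $f_q$ — is exactly what makes the orbit comparison legitimate. The only nitpick is that Theorem~\ref{th:parametric} shows the Nielsen--Schreier locus \emph{contains} a nonempty Zariski-open subset of $\mathbb{P}^4$ rather than being open itself, but since your argument only uses that this locus is not covered by finitely many locally closed sets of dimension at most one, nothing is affected.
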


\section{Alia and one-sided alia algebras}\label{sec:exampleslast}

Recall that alia (anti-Lie-admissible) algebras \cite{MR2676255} are the algebras with the following identity for the symmetrized and the skew-symmetrized operations:
 \[
[a_1,a_2]\circ a_3+[a_2,a_3]\circ a_1+[a_3,a_1]\circ a_2=0.
 \]
In the same paper one finds the definition of a left alia algebra as the algebra satisfying the identity
 \[
[a_1,a_2] a_3+[a_2,a_3] a_1+[a_3,a_1] a_2=0,
 \]
and the ``opposite'' definition of a right alia algebra as the algebra satisfying the identity
 \[
a_3[a_1,a_2]+a_1[a_2,a_3]+a_2[a_3,a_1]=0.
 \]
We shall show that the corresponding varieties have the Nielsen--Schreier property, proving the following general result.

\begin{theorem}
For any $\alpha$, the variety of algebras satisfying the identity
 \[
a_3[a_1,a_2]+a_1[a_2,a_3]+a_2[a_3,a_1]+\alpha([a_1,a_2] a_3+[a_2,a_3] a_1+[a_3,a_1] a_2)=0
 \]
has the Nielsen--Schreier property.
\end{theorem}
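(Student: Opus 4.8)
The plan is to apply Theorem~\ref{th:SchrComb} to the operad $\calO$ of this variety, working (as in the proofs of Theorem~\ref{th:rightnil} and Corollary~\ref{cor:LieAdm}) with the generators $a_1\circ a_2=a_1a_2+a_2a_1$ and $[a_1,a_2]=a_1a_2-a_2a_1$ of the associated shuffle operad $\calO^f$. The first step is purely bookkeeping: since $a_3b=\tfrac12(b\circ a_3)-\tfrac12[b,a_3]$ and $ba_3=\tfrac12(b\circ a_3)+\tfrac12[b,a_3]$, substituting $b=[a_1,a_2]$ and cycling turns the defining identity into
\[
\tfrac{1+\alpha}{2}\,A+\tfrac{\alpha-1}{2}\,J=0,\qquad A=[a_1,a_2]\circ a_3+[a_2,a_3]\circ a_1+[a_3,a_1]\circ a_2,
\]
where $J=[[a_1,a_2],a_3]+[[a_2,a_3],a_1]+[[a_3,a_1],a_2]$ is the Jacobiator. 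This element transforms as the sign representation of $S_3$, so in the shuffle operad it contributes a single relation $R_\alpha$ on the leaf set $\{1,2,3\}$, and rewriting $A$ and $J$ in shuffle tree monomials shows that $R_\alpha$ involves only six monomials: those obtained from the three shuffle trees with leaves $1,2,3$ by decorating all vertices by $[-,-]$, or the root by $-\circ-$ and the remaining vertex by $[-,-]$.

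The observation that makes the Gr\"obner bases here immediate is this: whenever the leading monomial of $R_\alpha$ is a shuffle tree whose two internal vertices carry the \emph{distinct} operations $-\circ-$ and $[-,-]$, it has no self-overlaps. Indeed, two copies of such a monomial inside a three-vertex tree must share a vertex, and in every configuration this forces either the root decoration and the inner decoration to coincide, or one vertex to be the inner vertex of both copies, neither of which can happen. Hence $\{R_\alpha\}$ is already a reduced Gr\"obner basis for any admissible ordering realising such a leading term (it is visibly reduced, its other five monomials not being divisible by the leading one), so it only remains to pick the orderings. Assume first $\alpha\neq-1$, so that the coefficient of $A$ is nonzero. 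For the reverse graded path-lexicographic ordering with $-\circ->[-,-]$, the leaf-depth sequences $(2,2,1)$, $(2,1,2)$, $(1,2,2)$ of the three trees identify $a_1\circ[a_2,a_3]$ as the leading term of $R_\alpha$, and its minimal leaf is directly connected to the root --- this is Property (M1). For the graded path-lexicographic ordering with $-\circ->[-,-]$, the leading term is instead $[a_1,a_2]\circ a_3$, which is a left comb in which the second smallest leaf $2$ is a sibling of the minimal leaf $1$ --- this is Property (M2). In both cases the leading monomial has the two distinct decorations, so the observation above applies and $\{R_\alpha\}$ is the reduced Gr\"obner basis; Theorem~\ref{th:SchrComb} then yields the Nielsen--Schreier property.

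The excluded value $\alpha=-1$ reduces the identity to $J=0$, that is, to the Jacobi identity for the commutator: this is precisely the variety of Lie-admissible algebras, whose Nielsen--Schreier property is Corollary~\ref{cor:LieAdm}. (Equivalently, $\calO^f$ is in this case the coproduct of the Lie operad with the free operad on one commutative binary operation, so its reduced Gr\"obner basis is the shuffle Jacobi relation, which satisfies (M1) and (M2) exactly as in the worked example following Theorem~\ref{th:SchrComb}.)

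The one step requiring genuine care is the first: expanding the defining identity in the $\{-\circ-,[-,-]\}$ basis with the correct signs, recording the six shuffle tree monomials together with their coefficients, and verifying that the leading monomial under each of the two orderings is the claimed one --- and separating off the degenerate value $\alpha=-1$. No real S-polynomial computation enters, because the leading terms we use carry mixed decorations, which is exactly what collapses the Gr\"obner basis onto the single defining relation.
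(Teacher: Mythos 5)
Your proof is correct and follows the same route as the paper: rewrite the identity in the basis $\{-\circ-,[-,-]\}$ as $\tfrac{1+\alpha}{2}A+\tfrac{\alpha-1}{2}J=0$, observe that for $\alpha\ne-1$ the single relation has, for the two orderings, the leading terms $a_1\circ[a_2,a_3]$ and $[a_1,a_2]\circ a_3$ with no self-overlaps, and fall back on Corollary~\ref{cor:LieAdm} when $\alpha=-1$. In fact your leading terms are the right ones: the trees displayed in the paper's proof have the two vertex decorations interchanged (they show root $[-,-]$ and inner vertex $\circ$, i.e.\ monomials of the form $[a_1,a_2\circ a_3]$ that do not even occur in the relation), so your computation silently corrects a typo. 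One small point in your ``no self-overlap'' observation: besides the chain configuration (the shared vertex is the root of one copy and the inner vertex of the other, forcing the two decorations to coincide) and the impossible configuration (the shared vertex is the inner vertex of both copies), there is a third one, the fork in which the shared vertex is the root of both copies; it is also excluded, but for a different reason --- the two occurrences then have different shapes (one is a left comb, the other a right comb), so they cannot both be the unique leading monomial.
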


\begin{proof}
For $\alpha=-1$, we obtain the variety of Lie-admissible algebras, so Corollary \ref{cor:LieAdm} applies. Suppose that $\alpha\ne-1$. Analogous to the proof of Theorem \ref{th:parametric}: if we write everything in terms of the symmetrized and the skew-symmetrized operations, we obtain, for the two orderings of interest, the leading terms
\[
\rbincomb{[-,-]}{\circ}{1}{2}{3} \text{ and } \lbincomb{[-,-]}{\circ}{1}{2}{3}
 \] 
respectively. In each case, there are no self-overlaps, so we obtain a Gr\"obner basis, and Theorem \ref{th:SchrComb} applies. (In fact, for $\alpha\ne -1,1$ the corresponding operads are all pairwise isomorphic.)
\end{proof}

\section*{Funding} This work was supported by Institut Universitaire de France, by Fellowship of the University of Strasbourg Institute for Advanced Study through the French national program ``Investment for the future'' [IdEx-Unistra, grant USIAS-2021-061 to V.D.]; the French national research agency [grant ANR-20-CE40-0016 to V.D]; and the Ministry of Education and Science of the Republic of Kazakhstan [grant number AP14872073 to U.U.].

\section*{Acknowledgements } We are grateful to Leonid Bokut for insight into the history of the Dniester Notebook and to Frederic Chapoton for an independent check of the erroneous claim we discovered in \cite{KozMult}. We also thank Xabier Garc\'ia-Mart\'inez, Victor Ginzburg, Anton Khoroshkin, Ivan Shestakov, Bruno Vallette, and Pedro Tamaroff for discussions of a draft version of this paper. Finally, it is a pleasure to thank the participants of the seminar of the team ART (Alg\`ebre, Repr\'esentations, Topologie) and online seminar \texttt{LieJor}, in particular Vesselin Drensky, Christian Kassel, and Victor Petrogradsky, for interesting questions and remarks on the talks about our results. 

A significant part of this work was completed during the visits of the Max Planck Institute for Mathematics in Bonn (of both authors) and the Centre de Recerca Matem\`atica in Barcelona (of the first author); we wish to express our gratitude to those institutions for hospitality and excellent working conditions.

\bibliographystyle{plain}
\bibliography{biblio}

\end{document}